\documentclass[12pt]{article}

\usepackage[english]{babel}
\usepackage[utf8x]{inputenc}		
\usepackage{enumerate}
\usepackage{textcomp}               
\usepackage{typearea}
\usepackage{pdfpages}
\usepackage[toc]{appendix}
\usepackage[]{todonotes}
\usepackage{subcaption}
\usepackage{verbatim}
\usepackage{comment}
\usepackage{mathtools}
\mathtoolsset{showonlyrefs,showmanualtags}
\usepackage[hidelinks]{hyperref}

\usepackage{enumitem}
\usepackage{amsfonts, amsmath, amssymb, amsthm, dsfont}
\usepackage{amsthm}			
\usepackage{thumbpdf}	
\usepackage{natbib}
\allowdisplaybreaks

\usepackage{booktabs}

\usepackage[noblocks]{authblk}
\usepackage{ulem}

\usepackage{pgf, tikz}
\usepackage{tikz-cd}		
\usepackage{bbm}

\theoremstyle{definition}
\newtheorem{definition}{Definition}[section]

\newtheorem*{assumption*}{Assumption}

\newtheorem*{condition*}{Condition}

\theoremstyle{plain}
\newtheorem{theorem}[definition]{Theorem}
\newtheorem{proposition}[definition]{Proposition}
\newtheorem{lemma}[definition]{Lemma}
\newtheorem{corollary}[definition]{Corollary}
\theoremstyle{remark}
\newtheorem{remark}[definition]{Remark}

\makeatletter
\patchcmd{\proof@init}{\@empty}{Proof\@ifnotempty{#1}{\ #1}}{}{}
\makeatother

\usepackage{graphicx}

\newcommand{\N}{\mathbb{N}}

\newcommand{\R}{\mathbb{R}}

\newcommand{\E}{\mathbb{E}}

\newcommand{\F}{\mathcal{F}}

\newcommand{\pconv}{\xrightarrow{\mathbb{P}}}

\newcommand{\Var}{\operatorname{Var}}
\newcommand{\Cov}{\operatorname{Cov}}

\newcommand{\deq}{\overset{d}{=}}

\newcommand{\wconv}{\Rightarrow}
\newcommand{\iid}{\overset{iid}{\sim}}

\newcommand{\Fhyp}{\mathbb{F}}
\newcommand{\NSP}{\textsc{NSP}}

\newcommand{\beps}{\boldsymbol{\epsilon}}

\excludecomment{proofcomplete}
\excludecomment{comment}

\title{At the edge of Donsker's Theorem:\\ Asymptotics of multiscale scan statistics} 

\author[1]{Johann Köhne}
\author[2]{Fabian Mies}

\affil[1]{\footnotesize University of Göttingen}
\affil[2]{\footnotesize Delft University of Technology}

\date{}

\begin{document}

\maketitle

\begin{abstract}
\noindent

For nonparametric inference about a function, multiscale testing procedures resolve the need for bandwidth selection and achieve asymptotically optimal detection performance against a broad range of alternatives. 
However, critical values strongly depend on the noise distribution, and we argue that existing methods are either statistically infeasible, or asymptotically sub-optimal. 
To address this methodological challenge, we show how to develop a feasible multiscale test via weak convergence arguments, by replacing the additive multiscale penalty with a multiplicative weighting.
This new theoretical foundation preserves the optimal detection properties of multiscale tests and extends their applicability to nonstationary nonlinear time series via a tailored bootstrap scheme.
Inference for signal discovery, goodness-of-fit testing of regression functions, and multiple changepoint detection is studied in detail, and we apply the new methodology to analyze the April 2025 power blackout on the Iberian peninsula.
Our methodology is enabled by a novel functional central limit in Hölder spaces with critical modulus of continuity, where Donsker's theorem fails to hold due to lack of tightness. Probabilistically, we discover a novel form of thresholded weak convergence that holds only in the upper support of the distribution.

\end{abstract}

\section{Introduction}

The fundamental problem of signal discovery, or anomaly detection, may be modeled as 
\begin{align}
    Y_t = f_n(t)+ \eta_t, \quad t=1,\ldots, n, \label{eqn:model-signal}
\end{align}
for iid centered random variables $\eta_t$ with variance $\sigma^2=\Var(\eta_t)$, and a regression function of the form $f_n(t)=\mu_n \mathds{1}(a_n < t \leq b_n)$. 
The statistical problem is to detect if a signal is present, that is, to test the null hypothesis $\mathbb{H}_0:\mu_n=0$ versus $\mathbb{H}_1:\mu_n\neq 0$. 
An established statistical procedure, both in theory and practice \citep{glaz_scan_2009}, is based on the local scan statistic $T_n(I)= |\sum_{t\in I} Y_t| / \sqrt{|I|}$ for any interval $I=(a,b]$ with $0\leq a<b\leq n$. If $a,b\notin \N$, the sum may be extended by linear interpolation. 
The null hypothesis is rejected for large values of the global scan statistic $T^{\text{SCAN}}_n=\max_I T_n(I)$. 
As concisely reviewed by \citet{walther_calibrating_2022}, the statistic $T^{\text{SCAN}}_n$ is dominated by the small intervals $I$, and accordingly has an extreme value limit distribution of Gumbel type for standard Gaussian errors $\eta_t\sim \mathcal{N}(0,1)$ \citep{sharpnack_exact_2016}.
In particular, $T^{\text{SCAN}}_n=\sqrt{2\log n} + o_P(1)$, and thus the signal with size $\mu_n$ and length $l_n=b_n-a_n$ can only be detected consistently if $|\mu_n|^2 l_n > 2\log n$. 
Due to the dominance of short intervals, this threshold is asymptotically suboptimal for longer signals of length $l_n\propto n$. In fact, \cite{dumbgen_multiscale_2001} show that a signal may be consistently detected if $\mu_n^2 l_n > 2 \log(\frac{en}{l_n})$, which is achieved by a multiscale statistic of the form
\begin{align}
    T^{\text{DS}}_n = \max_I \; \left\{T_n(I)-\sigma\sqrt{2 \log \frac{en}{|I|}} \right\}_+,\label{eqn:DS} \tag{DS}
\end{align}
where $x_+ = \max(x,0)$.
The asymptotic distribution of $T^{\text{DS}}_n$ with Gaussian errors is accurately described by \cite{dumbgen_multiscale_2001} and critical values may be computed. 
The concept of multiscale test statistics has since found multiple further applications, in particular in changepoint inference \citep{frick2014multiscale,fryzlewicz_narrowest_2024,bastian_multiscale_2025}, shape inference for a density from direct measurements \citep{dumbgen_multiscale_2008} or via deconvolution \citep{schmidt-hieber_multiscale_2013}, and rank-based tests \citep{rohde_adaptive_2008}.

The additive penalty for the multiscale statistic and the corresponding critical values are only valid for Gaussian noise.
The latter setting is often considered as prototypical, as methods developed for the the Gaussian case can usually be applied to non-Gaussian data by virtue of the central limit theorem.
However, for the multiscale statistic $T_n^{\text{DS}}$, the situation is more involved: The following proposition shows that if the noise has only slightly heavier, but still sub-Gaussian tails, the statistical inference is invalidated.
\begin{proposition}\label{prop:invalid}
    Let $\mu_n=0$, i.e.\ no signal, and $\eta_t = \epsilon_t Z_t$ for $Z_t\sim \mathcal{N}(0,1/p)$ and $\epsilon_t\sim \text{Bin}(1,p)$ for some $p\in(0,1)$, such that $\Var(\eta_t)=1$. 
    Then $T_n^{\mathrm{DS}}\to \infty$ in probability.
\end{proposition}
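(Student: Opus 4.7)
The plan is to lower-bound $T_n^{\text{DS}}$ by restricting the outer maximum to singleton intervals $I=(t-1,t]$ for $t=1,\ldots,n$, and to exploit that conditional on $\epsilon_t=1$, the error $\eta_t$ has standard deviation $1/\sqrt{p}>1$. The additive penalty $\sigma\sqrt{2\log(en/|I|)}=\sqrt{2\log(en)}$ is calibrated to variance-one Gaussian tails and will therefore be strictly dominated by the maximum of the non-zero entries.

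First I would note that $\sigma=1$ and drop to
\begin{align}
T_n^{\text{DS}} \;\ge\; \max_{1\le t\le n}\bigl\{|\eta_t|-\sqrt{2\log(en)}\bigr\}_{+}.
\end{align}
Next, writing $S_n=\{t\le n:\epsilon_t=1\}$, the law of large numbers gives $|S_n|/n\to p$ almost surely, so $|S_n|\ge np/2$ on an event of probability tending to one. Conditional on $S_n$, the collection $(\eta_t)_{t\in S_n}$ is i.i.d.\ $\mathcal{N}(0,1/p)$, and classical extreme-value asymptotics for Gaussian maxima yield
\begin{align}
\frac{\max_{t\in S_n}|\eta_t|}{\sqrt{(2/p)\log|S_n|}}\;\pconv\;1.
\end{align}

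Combining the two previous displays, on the high-probability event $\{|S_n|\ge np/2\}$ we obtain
\begin{align}
T_n^{\text{DS}} \;\ge\; (1-o_P(1))\sqrt{\tfrac{2}{p}\log(np/2)}-\sqrt{2\log(en)} \;=\; \sqrt{2\log n}\,\Bigl[\tfrac{1}{\sqrt{p}}-1-o_P(1)\Bigr],
\end{align}
which diverges to $+\infty$ in probability since $1/\sqrt{p}>1$.

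The argument is structurally short; the only mild subtlety is justifying the extreme-value lower bound once $|S_n|$ is itself random, but this is handled by conditioning on $S_n$ and using that $|S_n|\to\infty$ in probability, so the convergence in the second display transfers to the unconditional statement. No tightness or Donsker-type machinery is needed: the failure is driven purely by the heavy per-observation tail of the mixture, which the additive calibration $\sqrt{2\log(en/|I|)}$ cannot absorb at scale $|I|=1$.
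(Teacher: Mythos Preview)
Your proposal is correct and follows essentially the same idea as the paper: restrict the supremum in $T_n^{\text{DS}}$ to singleton intervals $|I|=1$, so that $T_n^{\text{DS}}\ge \max_{t\le n}|\eta_t|-\sqrt{2\log(en)}$, and then argue that $\max_t|\eta_t|$ grows like $\sqrt{(2/p)\log n}$ rather than $\sqrt{2\log n}$ because the nonzero entries have variance $1/p>1$.

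The only methodological difference is that the paper (via the proof of Proposition~\ref{prop:lower-tail}, to which Proposition~\ref{prop:DS-A} defers) bounds $P(\max_t|\eta_t|\le c\sqrt{1+\log n})$ by a direct tail computation of the form $(1-p\,n^{-c^2 p})^n\to 0$ for $c<1/\sqrt{p}$, whereas you invoke the extreme-value asymptotics $\max_{t\in S_n}|\eta_t|/\sqrt{(2/p)\log|S_n|}\pconv 1$ together with $|S_n|\ge np/2$. Both routes yield the same lower bound $\sqrt{2\log n}\,[1/\sqrt{p}-1-o_P(1)]$; the paper's computation avoids the (mild) conditioning step on the random set $S_n$, while your use of classical Gaussian extremes is arguably more transparent about why the constant $1/\sqrt{p}$ appears.
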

This finding is a direct consequence of Proposition \ref{prop:DS-A} below.
It reveals that for multiscale testing, one can not directly rely on asymptotic arguments to reduce the non-Gaussian case to the Gaussian. 
As a remedy, it has been suggested to impose a lower bound $|I|>h_n$ on the length of the considered intervals in \eqref{eqn:DS} which accounts for the speed of convergence of the central limit theorem for $T_n(I)$, and study the statistic $T_{n, h_n}^{\text{DS}} = \max_{|I|\geq h_n} \{T_n(I) - \sigma \sqrt{2\log(en/|I|)}\}_+$.
For sub-exponential errors, \citet{schmidt-hieber_multiscale_2013} and \citet{frick2014multiscale} impose $h_n \gg \log^{3}(n)$, and \cite{konig_multidimensional_2020} require $h_n\gg \log(n)^{12}$.
For auto-correlated errors with polynomial tails, \cite{dette_multiscale_2020} and \cite{khismatullina_multiscale_2020} require $h_n \gg n^{q}$ for some exponent $q\in(0,1)$ depending on the order of the tail bound and the decay of temporal dependence.
As Proposition \ref{prop:invalid} shows, choosing $h_n$ too small will invalidate the statistical analysis, but this theoretical lower bound is not known in practice. 
Thus, $h_n$ will necessarily be chosen too big, sacrificing power against short signals. 
Even in the idealized situation where a sharp and admissible $h_n$ is known, the test lacks power in the regime $l_n \ll h_n$.
We show that the loss of power can be expressed in terms of the ratio $l_n/h_n$.

\begin{proposition}\label{prop:powerless}
    Suppose that $\eta_t$ are iid standard Gaussian, and $l_n \leq h_n$. 
    If $\mu_n^2 l_n (\frac{l_n}{h_n}) \gg \log(\frac{en}{h_n})$, then $T_{n,h_n}^{\text{DS}} \to \infty$ and the test is consistent. 
    On the other hand, if $\mu_n^2 l_n (\frac{l_n}{h_n})= \mathcal{O}(1)$, then $T_{n, h_n}^{\text{DS}} = \mathcal{O}_P(1)$ and the test fails to be consistent.
\end{proposition}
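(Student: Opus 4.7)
I would bound $T_{n,h_n}^{\text{DS}}$ from above and below by separating signal from noise via the triangle inequality, and reduce the noise contribution to the unrestricted Gaussian Dümbgen-Spokoiny statistic, which is $\mathcal{O}_P(1)$ by \citet{dumbgen_multiscale_2001}. Writing $S_I = \sum_{t \in I}\eta_t$ (with the linear-interpolation convention for non-integer endpoints) and $p(I) = \sqrt{2\log(en/|I|)}$, model \eqref{eqn:model-signal} yields $T_n(I)\sqrt{|I|} = |\,\mu_n\,|I\cap(a_n,b_n]| + S_I\,|$, so that for every admissible interval,
\[
\big|\,|\mu_n|\cdot|I\cap(a_n,b_n]| - |S_I|\,\big| \;\leq\; T_n(I)\sqrt{|I|} \;\leq\; |\mu_n|\cdot|I\cap(a_n,b_n]| + |S_I|.
\]

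\textbf{Consistency direction.} Since $l_n \leq h_n$, I can choose an interval $I^\ast \subseteq (0,n]$ of length $|I^\ast| = h_n$ containing $(a_n,b_n]$. Then $|I^\ast\cap(a_n,b_n]|=l_n$ and $S_{I^\ast}/\sqrt{h_n}\sim\Normal(0,1)$, so the lower bound above gives
\[
T_n(I^\ast) - p(I^\ast) \;\geq\; \frac{|\mu_n|\,l_n}{\sqrt{h_n}} - \sqrt{2\log(en/h_n)} - \mathcal{O}_P(1).
\]
The hypothesis $\mu_n^2 l_n (l_n/h_n) \gg \log(en/h_n)$ is precisely $|\mu_n|l_n/\sqrt{h_n} \gg \sqrt{\log(en/h_n)}$, so the right-hand side and therefore $T_{n,h_n}^{\text{DS}}$ diverges in probability.

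\textbf{Inconsistency direction.} Using $|I\cap(a_n,b_n]|\leq l_n$ and the elementary inequality $\{a+b\}_+\leq a+b_+$ for $a\geq 0$, the upper bound yields, for every admissible $I$,
\[
\left\{T_n(I)-p(I)\right\}_+ \;\leq\; \frac{|\mu_n|\,l_n}{\sqrt{|I|}} + \left\{\frac{|S_I|}{\sqrt{|I|}} - p(I)\right\}_+.
\]
Maximizing over $|I|\geq h_n$ (and relaxing the constraint on the noise term to all intervals, which only increases the maximum),
\[
T_{n,h_n}^{\text{DS}} \;\leq\; \frac{|\mu_n|\,l_n}{\sqrt{h_n}} + \max_{I\subseteq(0,n]}\left\{\frac{|S_I|}{\sqrt{|I|}} - p(I)\right\}_+.
\]
Under $\mu_n^2 l_n(l_n/h_n) = \mathcal{O}(1)$ the first summand is $\mathcal{O}(1)$, and the second is the unrestricted Dümbgen-Spokoiny statistic for pure Gaussian noise, which is $\mathcal{O}_P(1)$ by \citet{dumbgen_multiscale_2001}, closing the argument.

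\textbf{Where the work is.} Both directions reduce to algebraic manipulations after the triangle-inequality split; the only nontrivial ingredient is the classical $\mathcal{O}_P(1)$ bound for the full Gaussian Dümbgen-Spokoiny statistic, which I would cite rather than reprove. Minor care is needed so that the linear-interpolation convention keeps $S_{I^\ast}/\sqrt{h_n}$ exactly Gaussian and preserves the envelope $|I\cap(a_n,b_n]|\leq l_n$; both reduce to the fact that the interpolation rule is a convex combination of adjacent integer values with weights summing to $h_n$.
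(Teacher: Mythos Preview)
Your proof is correct and follows essentially the same route as the paper: choose an interval of length $h_n$ covering the signal for the lower bound, separate signal and noise via the triangle inequality for the upper bound, and invoke the $\mathcal{O}_P(1)$ result of \citet{dumbgen_multiscale_2001} for the Gaussian multiscale statistic. If anything, your treatment is slightly tidier than the paper's, which writes $\E(T_n(I))$ for what is really the signal component $|\mu_n|\,|I\cap(a_n,b_n]|/\sqrt{|I|}$ despite the absolute value in the definition of $T_n(I)$; your explicit use of $\{a+b\}_+\leq a+\{b\}_+$ for $a\geq 0$ makes this step transparent.
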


To summarize, the multiscale statistic $T_n^{\text{DS}}$ of \cite{dumbgen_multiscale_2001} does not maintain the size for non-Gaussian errors, and the truncated version $T_{n,h_n}^{\text{DS}}$ is suboptimal for short signals.

The contribution of this paper is to develop a broadly applicable asymptotic theory for multiscale statistics of non-Gaussian data, giving rise to a feasible and optimal multiscale testing procedure in a wide range of sampling settings.
Instead of the additively penalized statistic $T^{\text{DS}}_n$, we suggest to use the multiplicatively weighted statistic
\begin{align}
    T_n^* = \max_I \frac{T_n(I)}{\sqrt{\log{e\frac{n}{|I|}}}}. \label{eqn:Tstat}
\end{align}
We show that $T_n^*/\sigma$ is asymptotically pivotal under the assumption of sub-Gaussian errors.
Importantly, the statistical methodology is agnostic of the exact tail bound and only uses the noise variance, which can be estimated reliably.
Our probabilistic results are based on the analysis of edge cases in Donsker's theorem in Hölder-type spaces.
Introducing the interpolated partial sum process $S_n(u) = \frac{1}{\sqrt{n}} \sum_{t=1}^{\lfloor un\rfloor} Y_t + \frac{un-\lfloor un\rfloor}{\sqrt{n}} Y_{\lfloor un\rfloor +1}$, and the modulus of continuity $\rho_2(h) = \sqrt{h \log\frac{e}{h}}, h\in(0,1)$, the statistic $T_n^*$ may be expressed as the Hölder-type seminorm
\begin{align}
    T_n^* = |S_n|_{\rho_2} = \sup_{u,v\in[0,1]} \frac{|S_n(u)-S_n(v)|}{\rho_2(|u-v|)}. \label{eqn:Tn-holder}
\end{align}
Donsker's theorem \citep{Billingsley1999} establishes weak convergence of $S_n(u)$ towards a Brownian motion $\sigma B(u)$ in $C[0,1]$, whereas the treatment of \eqref{eqn:Tn-holder} requires weak convergence in the stronger Hölder-type space $C^{\rho_2}$. 
In view of Donsker's theorem in Hölder spaces \citep{lamperti_convergence_1962,hamadouche_weak_1998,rackauskas_necessary_2004,rackauskas_necessary_2004-1,rackauskas_convergence_2020}, the modulus $\rho_2$ presents exactly the edge of its applicability: the limiting Brownian motion is stochastically bounded in $C^{\rho_2}$, but not tight. 
Thus, Prokhorov's Theorem can not be used to show weak convergence in this space, which indeed does not even hold.
Surprisingly, it is possible to show that $P(|S_n|_{\rho_2}/\sigma>t) \to P(|B|_{\rho_2}>t)$, but only for sufficiently large $t>t_0$, and we can provide exact bounds on $t_0$ in terms of the tails of $\eta_t$.
The lower bound on $t$ is not merely a deficiency of our proof, but the weak convergence actually does not hold below this threshold; see Proposition \ref{prop:lower-tail} and Figure \ref{fig:ExceedancePlot}.
We term this novel phenomenon \textit{thresholded weak convergence}, and we study its properties in more detail in Section \ref{sec:weaktail}.
The implication of this probabilistic result for statistical inference is that we may choose critical values based on the pivotal limit distribution of $|B|_{\rho_2}$, and for sufficiently small significance level $\alpha\leq \alpha_0$, these critical values will be asymptotically valid. 
We stress that $\alpha_0$ does not vanish as $n\to \infty$, but is a fixed value.

Our new results on thresholded weak convergence are broadly applicable beyond the prototypical signal discovery problem \eqref{eqn:model-signal}, and beyond iid observations. 
In Section \ref{sec:weaktail}, we specifically showcase how to apply our results to nonstationary time series. 
We also derive a novel sub-Gaussian concentration inequality for nonlinear time series (Theorem \ref{thm:concentration-dependent}), which might be of independent interest.
After discussing the signal discovery problem in detail in Section \ref{sec:stat-signal}, we formulate a general goodness-of-fit test in Section \ref{sec:gof}, and propose novel inferential procedures for multiple changepoint detection in Section \ref{sec:change}. In Section \ref{sec:nonstationary}, we describe how to derive critical values for nonstationary errors via a multiplier bootstrap. 

The multiscale test statistic $T_n^{\text{DS}}$ has been criticized by \citet{walther_calibrating_2022} for loosing too much finite sample power against short signals. 
We address this criticism via variants of test statistics $|S_n|_\rho$ for the signal discovery problem based on alternative moduli $\rho$ such that $\rho(h)/\rho_2(h)\to 1$ as $h\to 0$.
Thus, the procedure maintains its optimal asymptotic properties, while being more sensitive to short signals.
Simulations presented in Section \ref{sec:stat-signal} confirm the improved finite sample detection performance for signals of different lengths.
An extended simulation study is presented in Section \ref{sec:simulations}, where we assess the distributional approximation under the null hypothesis and illustrate our methodology on a simulated data set.
Moreover, we apply our procedures to measurements of the power grid frequency during the April 2025 blackout on the Iberian peninsula, and identify various anomalies preceding the ultimate blackout.
All mathematical proofs are gathered in the Appendix.

\subsection*{Notation}
For two sequences $a_n, b_n$ of real numbers, we denote $a_n\ll b_n$ to mean $a_n/b_n\to 0$ as $n\to\infty$.
We denote weak convergence of a random element by $\wconv$, weak convergence of the finite dimensional marginals of a sequence of stochastic processes by $\wconv_{\mathrm{fidi}}$, and thresholded weak convergence of random variables by $\wconv_\tau$ (introduced in Section \ref{sec:weaktail}). 
The space of continuous functions on $[0,1]$ is denoted by $C[0,1]$.
A modulus of continuity is a continuously increasing function $\rho:[0,1]\to (0,\infty)$ with $\rho(0)=0$, and the corresponding Hölder-type seminorm on $C[0,1]$ is $|f|_\rho = \sup_{u,v\in[0,1]} |f(u)-f(v)|/\rho(|u-v|)$, inducing the Hölder-type Banach space $C^\rho = \{ f\in C[0,1] \,:\, \|f\|_\rho = \|f\|_\infty + |f|_\rho \}$. Of particular relevance are the special cases $\rho_\alpha(h) = \sqrt{h}\log^{\frac{1}{\alpha}}(e/h)$, for $\alpha\in[0,2]$, and $\rho_{2,a}(h)=\sqrt{h (a+\log(e/h))}$. For a given modulus of continuity we denote by $C^\rho_0$ the subspace of functions $f\in C^\rho$ such that
\begin{align*}
    \lim_{\delta\to 0}\sup_{0<|u-v|<\delta}\frac{|f(u)-f(v)|}{\rho(|u-v|)}=0.
\end{align*}
For two moduli of continuity $\rho$ and $\rho$' we denote by $\rho\gg\rho'$ that $\rho'(h)/\rho(h)\to 0$ as $h\to 0$. The interpolated partial sum process based on $Y_t$ is denoted as $S_n(u) = \frac{1}{\sqrt{n}} \sum_{t=1}^{\lfloor un\rfloor} Y_t + \frac{un-\lfloor un\rfloor}{\sqrt{n}} Y_{\lfloor un\rfloor +1}$, and $\widetilde{S}_n(u)$ denotes the interpolated partial sum process based on the noise variables $\eta_t$, that is $\tilde S_n(u) = \frac{1}{\sqrt{n}} \sum_{t=1}^{\lfloor un\rfloor} \eta_t + \frac{un-\lfloor un\rfloor}{\sqrt{n}} \eta_{\lfloor un\rfloor +1}$.

\section{Asymptotic theory}\label{sec:weaktail}

Central to our statistical methodology is a detailed asymptotic treatment of the interpolated partial sum process of the random variables $\eta_t$,
\begin{align*}
    \frac{1}{\sqrt{n}} \sum_{t=1}^{\lfloor un\rfloor} \eta_t + \frac{un-\lfloor un\rfloor}{\sqrt{n}} \eta_{\lfloor un\rfloor +1}.
\end{align*}
The process $\widetilde{S}_n$ is a random element in the Polish space $C[0,1]$, and for iid $\eta_t$,  Donsker's Theorem \citep{Billingsley1999} establishes its weak convergence towards a Brownian motion, $\widetilde{S}_n \wconv \sigma B$.
The continuous mapping theorem implies that $T(\widetilde{S}_n)\wconv T(\sigma B)$ for any continuous functional $T:C[0,1]\to \R$. 
In an inferential setting, $T(S_n)$ is a test statistic, and the continuity requirement restricts the class of possible  statistics. 
Since the paths of $u\mapsto \widetilde{S}_n(u)$ are highly regular as piecewise linear functions, one may instead consider weak convergence in the smaller Hölder-type spaces $C^{\rho}_0$ for modulus of continuity $\rho$, thus allowing for a broader class of functionals; see \cite{lamperti_convergence_1962} for an early treatment of this idea.
The boundary case is marked by $\rho_2(h)=\sqrt{h \log(e/h)}$, which is the modulus of continuity of the limiting Brownian motion such that $B\in C^{\rho_2}$, but not $B\in C^{\rho_2}_0$. 
Thus we can not expect a central limit theorem in $C^{\rho_2}_0$, but at best in $C^{\rho_2}$. 
However, as Proposition \ref{prop:lower-tail} below shows, $\widetilde{S}_n$ does not converge weakly in $C^{\rho_2}$.

As a way to resolve this situation, we observe that not all continuous functionals $T:C^{\rho_2}\to \R$ are relevant for statistical inference. 
As outlined in the introduction, we are particularly interested in the functional $T(\widetilde{S}_n)=|\widetilde{S}_n|_{\rho_2}$, i.e.\ the Hölder-type seminorm in $C^{\rho_2}$.
We show that the distribution of this specific random variable converges on the majority of its support, but not everywhere. 
To formalize this notion, we introduce the new concept of thresholded weak convergence as follows.

\begin{definition}
A a sequence of real-valued random variables $(X_n)_n$ converges weakly beyond a threshold to a random variable $X$, if there exists some $\tau\in\R$ such that 
\begin{align*}
    P(X_n>t)\to P(X>t)\quad\forall t>\tau, \; t\in \mathcal{C}(X),
\end{align*}
where $\mathcal{C}(X)$ denotes the points of continuity of the distribution function of $X$.
We denote this by $X_n\wconv_{\tau} X$.
Equivalently, $(X_n \vee  \tau) \wconv (X\vee \tau)$.
\end{definition}

Our central probabilistic result is the following sufficient criterion for thresholded weak convergence of Hölder-type seminorms.

\begin{theorem}\label{thm:tail-convergence}
    Let $W_n$ be a sequence of stochastic processes with paths in $C[0,1]$, and let $\rho_0:[0,1]\to[0,\infty)$ an increasing function with $\rho_0(0)=0$.
    Suppose there exist $C>0$ and for any $t>C$ a $\kappa(t)>1$ and $K(t)>0$, such that  
    \begin{align}
        P\left(\frac{|W_n(u)-W_n(v)|}{\rho_0(|u-v|)} >t\right) \leq K(t) |u-v|^{\kappa(t)},\qquad u,v\in[0,1]. \label{eqn:tail-tight}\tag{T}
    \end{align}
    Assume further that there exist $p>0$ and $\widetilde{K}>0$ such that 
    \begin{align}
        \rho_0(zh)\leq \widetilde{K} z^p \rho_0(h), \qquad h,z\in[0,1].\label{eqn:rho-fac}\tag{R}
    \end{align}
    Then, for any $t>C$,
    \begin{align}
        \sup_n P\left( \sup_{u,v\in [0,1], |u-v|\leq h} \frac{|W_n(u)-W_n(v)|}{\rho_0(|u-v|)} > t \right) \longrightarrow 0, \quad \text{as } h\to 0, \label{eqn:cont-modulus}
    \end{align}
    and for any modulus of continuity $\rho$ such that $\rho(h)/\rho_0(h)\to \infty$ as $h\to 0$, the sequence $|W_n|_{\rho}$ is stochastically bounded.
    If furthermore there exists a limiting process $W$ such that $W_n\wconv_{\mathrm{fidi}} W$, then 
    \begin{align}
    \begin{split}
        W_n\wconv W \text{ in } C^\rho_0, &\qquad\text{if } \rho(h)/ \rho_0(h)\overset{h\to 0}{\longrightarrow} \infty, \\
        |W_n|_{\rho} \wconv_{C} |W|_{\rho}, &\qquad \text{if }\rho(h)/\rho_0(h)\overset{h\to 0}{\longrightarrow} 1.
    \end{split}\label{eqn:wconv}
    \end{align}
\end{theorem}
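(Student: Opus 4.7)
The proof is built around a dyadic chaining argument driven by the tail bound \eqref{eqn:tail-tight}, with the regularity \eqref{eqn:rho-fac} making the resulting geometric sums behave like $\rho_0(|u-v|)$. From this chaining, both \eqref{eqn:cont-modulus} and the stochastic boundedness of $|W_n|_\rho$ for $\rho\gg\rho_0$ fall out directly; combined with the Arzela-Ascoli-type characterization of compact sets in $C^\rho_0$, this yields weak convergence. The main obstacle is the thresholded convergence for $\rho/\rho_0\to 1$, where tightness fails in $C^\rho$ and the continuum supremum must be approximated by its dyadic discretization with an error that can be controlled only above the threshold $C$.

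\emph{Chaining and boundedness.} For $s>C$, let $A_m(s)$ denote the event that some adjacent dyadic increment at level $m$ exceeds $s\rho_0(2^{-m})$, where dyadic points at level $m$ are $D_m=\{k/2^m:0\le k\le 2^m\}$. A union bound and \eqref{eqn:tail-tight} give $P(A_m(s))\le K(s)\,2^{-m(\kappa(s)-1)}$, summable in $m$ because $\kappa(s)>1$. On $\bigcap_{m\ge m_0}A_m(s)^c$, chaining any pair $u,v$ with $|u-v|\le 2^{-m_0}$ through its binary expansion and telescoping via \eqref{eqn:rho-fac} yields $|W_n(u)-W_n(v)|\le C_0 s\,\rho_0(|u-v|)$, which is \eqref{eqn:cont-modulus}. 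Extending the chaining to all levels $m\ge 0$ produces $|W_n|_{\rho_0}\le C_0 s$ on an event of probability $\ge 1-K(s)/(1-2^{1-\kappa(s)})\to 1$ as $s\to\infty$. For $\rho\gg\rho_0$ the ratio $\rho_0/\rho$ is bounded on $[0,1]$, so $|W_n|_\rho=O_P(1)$, and the local modulus $\sup_{|u-v|\le\delta}|W_n(u)-W_n(v)|/\rho(|u-v|)\le \bigl(\sup_{h'\le\delta}\rho_0(h')/\rho(h')\bigr)\sup_{|u-v|\le\delta}|W_n(u)-W_n(v)|/\rho_0(|u-v|)$ vanishes in probability as $\delta\to 0$ by \eqref{eqn:cont-modulus} and $\rho_0/\rho\to 0$. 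The Arzela-Ascoli characterization of relatively compact sets in the Polish space $C^\rho_0$ combined with $W_n\wconv_{\mathrm{fidi}} W$ then yields $W_n\wconv W$ via Prohorov.

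\emph{Thresholded convergence.} For $\rho/\rho_0\to 1$, write $|W_n|_\rho^{(m)}=\max_{u\neq v\in D_m}|W_n(u)-W_n(v)|/\rho(|u-v|)$ for the dyadic discretization. Fidi convergence gives $|W_n|_\rho^{(m)}\wconv|W|_\rho^{(m)}$ for each fixed $m$, and path continuity gives $|W_n|_\rho^{(m)}\uparrow|W_n|_\rho$ almost surely. The lower bound $\liminf_n P(|W_n|_\rho>t)\ge P(|W|_\rho>t)$ at continuity points is then immediate. For the matching upper bound, approximating $u,v$ with $|u-v|>2^{-m+1}$ by $u',v'\in D_m$ chosen so that $|u'-v'|\le|u-v|$ and $|u-u'|,|v-v'|\le 2^{-m}$ yields $|W_n|_\rho\le|W_n|_\rho^{(m)}+2R_n^*(2^{-m+1})$, where $R_n^*(\delta)=\sup_{|u-v|\le\delta}|W_n(u)-W_n(v)|/\rho(|u-v|)$. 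Because $\rho/\rho_0\to 1$, the residual $R_n^*(\delta)$ is bounded, modulo a factor tending to one, by the $\rho_0$-modulus appearing in \eqref{eqn:cont-modulus}, and hence vanishes in probability uniformly in $n$ as $m\to\infty$ provided the relevant threshold exceeds $C$. Decomposing $P(|W_n|_\rho>t)\le P(|W_n|_\rho^{(m)}>t-\varepsilon)+P(2R_n^*(2^{-m+1})>\varepsilon)$ and sending $n\to\infty$, then $m\to\infty$, then $\varepsilon\to 0$ (for $t$ above the critical threshold) closes the argument. The requirement that $t$ exceed $C$ is exactly why the convergence is thresholded rather than full: below this level, \eqref{eqn:cont-modulus} cannot control the residual uniformly in $n$.
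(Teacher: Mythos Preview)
Your overall architecture is right, but two steps do not go through as written.

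\textbf{The thresholded upper bound.} Your decomposition
\[
P(|W_n|_\rho>t)\le P\bigl(|W_n|_\rho^{(m)}>t-\varepsilon\bigr)+P\bigl(2R_n^*(2^{-m+1})>\varepsilon\bigr)
\]
requires $P(R_n^*(\delta)>\varepsilon/2)\to 0$ for \emph{small} $\varepsilon$, uniformly in $n$. But $R_n^*(\delta)$ does \emph{not} vanish in probability: that is precisely why the convergence is thresholded rather than full. The control \eqref{eqn:cont-modulus} gives $P(R_n^*(\delta)>t)\to 0$ only for $t>C$, and you cannot take $t=\varepsilon/2$ small. Your own sentence ``vanishes in probability \ldots\ provided the relevant threshold exceeds $C$'' already conflates two different statements. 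The correct move is to split the supremum by scale, not additively: write $|W_n|_\rho=\max\bigl(\sup_{|u-v|>\Delta},\,\sup_{|u-v|\le\Delta}\bigr)$ and use the union bound
\[
P(|W_n|_\rho>t)\le P\Bigl(\sup_{|u-v|>\Delta}\tfrac{|W_n(u)-W_n(v)|}{\rho(|u-v|)}>t\Bigr)+P\Bigl(\sup_{|u-v|\le\Delta}\tfrac{|W_n(u)-W_n(v)|}{\rho(|u-v|)}>t\Bigr).
\]
The first term is a continuous functional on $C[0,1]$ and converges by your $C^\rho_0$ result; the second is controlled by \eqref{eqn:cont-modulus} because the threshold is $t>C$, not $\varepsilon$. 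This is what the paper does.

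\textbf{The chaining constant.} Your adjacent-dyadic chaining gives $|W_n(u)-W_n(v)|\le C_0\, s\,\rho_0(|u-v|)$ with a fixed $C_0>1$ coming from the geometric sum $\sum_{k\ge m_0}\rho_0(2^{-k})$ and the comparison $\rho_0(2^{-m_0})$ versus $\rho_0(|u-v|)$. This only yields \eqref{eqn:cont-modulus} for $t>C_0 C$, not for all $t>C$, so the thresholded convergence you would prove is $\Rightarrow_{C_0 C}$, not $\Rightarrow_C$. The paper recovers the sharp threshold by a finer chaining: at level $m$ it allows dyadic increments of length up to $2^{\lfloor m\delta\rfloor}/2^m$ (not just adjacent ones), so that for $|u-v|\le 2^{-m(1-\delta)}$ there exist $u_m,v_m\in D_m$ with $u\le u_m\le v_m\le v$. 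The term $|W_n(u_m)-W_n(v_m)|\le t\,\rho_0(|u_m-v_m|)\le t\,\rho_0(|u-v|)$ then carries coefficient exactly $t$, and the tails from $u_m$ to $u$ and $v_m$ to $v$ contribute only $t\cdot o(1)$ by \eqref{eqn:rho-fac}, so the overall bound is $t(1+o(1))\rho_0(|u-v|)$ and an $\varepsilon$-argument gives the full range $t>C$.

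A minor further issue: your stochastic boundedness of $|W_n|_{\rho_0}$ claims the bad-event probability $K(s)/(1-2^{1-\kappa(s)})\to 0$ as $s\to\infty$, but the hypotheses place no asymptotic constraint on $K(\cdot),\kappa(\cdot)$; the paper instead combines \eqref{eqn:cont-modulus} at a fixed level with tightness of the finitely many marginals from fidi convergence.
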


The previous result also extends to processes with values in metric spaces, see Theorem \ref{thm:metric} in the Appendix.
In the sequel, we will mostly use Theorem \ref{thm:tail-convergence} for the modulus $\rho_0(h)=\rho_\alpha(h)=\sqrt{h}|\log(e/h)|^{\frac{1}{\alpha}}$, which satisfies \eqref{eqn:rho-fac}, see Lemma \ref{Lemma:SchillVarianteWeibull}.
In this situation, a sufficient condition for \eqref{eqn:tail-tight} is
\begin{align}
    P\left(|W_n(u)-W_n(v)| >r\,C \sqrt{|u-v|}\right) \leq K \exp(-r^\alpha),\quad u,v\in[0,1], \;r>0. \label{eqn:tail-tight-alpha}\tag{T-$\alpha$}
\end{align}
The tightness condition \eqref{eqn:tail-tight-alpha} can be interpreted as an entropy bound in terms of the sub-Weibull Orlicz norms \citep{vladimirova_subweibull_2020}.
For the special case that $W_n$ is a standard Brownian motion, \eqref{eqn:tail-tight-alpha} holds with $C=\sqrt{2}$ and $\alpha=2$, and \eqref{eqn:cont-modulus} matches Levy's result on the modulus of continuity of Brownian motion \cite[Thm.~10.6]{schilling_brownian_2021}.

\begin{remark}
    Invariance principles in Hölder spaces have been first studied by \cite{lamperti_convergence_1962}, for the moduli $\rho(h)=h^\gamma$, who formulates the sufficient condition that for some $a,b>0$ with $\gamma<\frac{b}{a}$, the moment condition
    \begin{align*}
        \E|W_n(u)-W_n(v)|^a \leq C |u-v|^{1+b}
    \end{align*}
    holds. 
    Via Markov's inequality, this can be transformed into the form \eqref{eqn:tail-tight} with suitable modulus of continuity.
    For the statistically most relevant case $W_n=\widetilde{S}_n$, \cite{rackauskas_necessary_2004} reformulates the sufficient conditions in terms of polynomial tail bounds.
    Results for dependent data under mixing conditions are due to \cite{hamadouche_invariance_2000}, and for linear processes in Hilbert spaces by \cite{rackauskas_holderian_2009}.
    The stronger modulus $\rho_\alpha(h) = \sqrt{h} \log^\frac{1}{\alpha}(e/h)$ for $\alpha<2$ is studied by \cite{rackauskas_necessary_2004-1}, who also allow for observations in infinite dimensional Banach spaces.
    For the latter modulus, they formulate the condition $P(\|Y_t\|>\log^\frac{1}{\alpha}(er)) \ll 1/r$ as $r\to \infty$, or equivalently $P(\|Y_t\|>z) \ll \exp(-z^\alpha)$ as $z\to\infty$.
    As all these studies aim for a functional CLT, they necessarily omit the critical case $\rho_2$, where classical weak convergence fails. 
    Our result on thresholded weak convergence shows that in the critical case, one can still obtain certain distributional limits which is sufficient for many statistical purposes, as described in Section \ref{sec:statistics}.
\end{remark}

\begin{remark}\label{rem:discretization}
    Property \eqref{eqn:cont-modulus} also implies that $|W_n|_{\rho}$ and the corresponding discretized variant $\max_{i,j=1,\ldots, n} |W_n(\frac{i}{n}) - W_n(\frac{j}{n})|/\rho(\frac{|i-j|}{n})$ have the same limit distribution as $n\to\infty$, which facilitates numerical implementation.
\end{remark}

As a theoretical complement to the definition of thresholded weak convergence, we provide a version of Skorokhod's Representation Theorem. 
\begin{theorem}[Skorokhod representation]\label{thm:Skorokhod-tail}
    Let $W_n\in C[0,1]$ such that $W_n\wconv_{\mathrm{fidi}} W$, and $|W_n|_{\rho^*}\wconv_{T} |W|_{\rho^*}$.
    Then there exists a probability space with random elements $W_n$ and $W$ such that
    \begin{align}
    | W_n-W|_{\rho} \to 0,\qquad \forall \rho \gg \rho^*. \label{eqn:Skorokhod-1}
    \end{align}
\end{theorem}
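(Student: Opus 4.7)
The plan is to upgrade the two hypotheses into genuine weak convergence of $W_n$ in the Polish space $C^\rho_0$ for each $\rho\gg\rho^*$, and then invoke the classical Skorokhod representation theorem in that space.

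First, from $|W_n|_{\rho^*}\wconv_{T}|W|_{\rho^*}$ the capped variables $|W_n|_{\rho^*}\vee T$ converge weakly as proper real random variables, hence $|W_n|_{\rho^*}=\mathcal{O}_P(1)$ and $|W|_{\rho^*}<\infty$ a.s. Fix $\rho\gg\rho^*$ and set $\eta(h):=\sup_{s\leq h}\rho^*(s)/\rho(s)$, which tends to $0$ as $h\to 0$. The elementary pointwise bound
\[
\frac{|f(u)-f(v)|}{\rho(|u-v|)}\;\leq\;\eta(|u-v|)\,|f|_{\rho^*},\qquad u\neq v,
\]
shows that $|f|_{\rho^*}<\infty$ already forces the little-o condition defining $C^\rho_0$, so the paths of both $W_n$ and $W$ lie in $C^\rho_0$ a.s.

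Next I would prove tightness of $(W_n)$ in $C^\rho_0$. Taking the supremum over $|u-v|\leq h$ in the inequality above gives
\[
\sup_{|u-v|\leq h}\frac{|W_n(u)-W_n(v)|}{\rho(|u-v|)}\;\leq\;\eta(h)\,|W_n|_{\rho^*},
\]
which is $o_P(1)$ uniformly in $n$ as $h\to 0$. With $h=1$ the same bound, together with the fidi convergence of $W_n(0)$, yields $\|W_n\|_\rho=\mathcal{O}_P(1)$. The Arzelà–Ascoli-type characterisation of precompact sets in $C^\rho_0$ (boundedness in $\|\cdot\|_\rho$ plus uniformly vanishing $\rho$-modulus) then provides tightness of $(W_n)$ in $C^\rho_0$. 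Combined with $W_n\wconv_{\mathrm{fidi}} W$, every subsequential weak limit agrees with $W$ on finite-dimensional marginals, so the full sequence converges $W_n\wconv W$ in the separable Banach space $C^\rho_0$.

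Finally, since $C^\rho_0$ is Polish, the classical Skorokhod representation theorem furnishes versions of $W_n$ and $W$ on a common probability space with $\|W_n-W\|_\rho\to 0$ almost surely; a fortiori $|W_n-W|_\rho\to 0$, which is the desired conclusion. The main obstacle is the tightness step in the nonstandard space $C^\rho_0$: one must invoke the Arzelà–Ascoli-type characterisation of precompact sets and separability for a general modulus $\rho$ (rather than only the power-law case $\rho(h)=h^\alpha$), which is a standard but nontrivial extension of the classical theory developed in \cite{hamadouche_weak_1998,rackauskas_necessary_2004-1}.
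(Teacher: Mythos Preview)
Your argument is correct for a \emph{fixed} $\rho\gg\rho^*$: the compact embedding $C^{\rho^*}\hookrightarrow C^\rho_0$ together with $|W_n|_{\rho^*}=\mathcal{O}_P(1)$ and fidi convergence gives $W_n\wconv W$ in $C^\rho_0$, and Skorokhod then furnishes a space on which $\|W_n-W\|_\rho\to 0$ a.s. However, the theorem demands a \emph{single} coupling valid simultaneously for \emph{all} $\rho\gg\rho^*$---a point emphasized explicitly right after the statement---and applying Skorokhod separately in each $C^\rho_0$ produces a priori a different probability space for each $\rho$. As written, your proof does not deliver this.

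The paper circumvents the issue by applying Skorokhod only once, in $C[0,1]$, and then upgrading on that fixed space: for each $\rho\gg\rho^*$ one splits
\[
|W_n-W|_\rho\;\leq\;\sup_{|u-v|\geq\Delta}\frac{|(W_n-W)(u)-(W_n-W)(v)|}{\rho(|u-v|)}\;+\;\eta(\Delta)\bigl(|W_n|_{\rho^*}+|W|_{\rho^*}\bigr),
\]
where the first term is at most $2\|W_n-W\|_\infty/\rho(\Delta)\to 0$ for fixed $\Delta$, and the second is $o_P(1)$ as $\Delta\to 0$ uniformly in $n$ by stochastic boundedness of $|W_n|_{\rho^*}+|W|_{\rho^*}$. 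This yields $|W_n-W|_\rho\pconv 0$ for every $\rho\gg\rho^*$ on one and the same space. Your route is easily repaired along the same lines: run Skorokhod once (in $C[0,1]$, or in $C^{\rho_1}_0$ for one fixed $\rho_1\gg\rho^*$) and then apply the splitting for the remaining $\rho$. Working in $C[0,1]$ also sidesteps your appeal to Polishness and the Arzel\`a--Ascoli description of $C^\rho_0$ for general moduli.
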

We emphasize that Theorem \ref{thm:Skorokhod-tail} yields a single coupling which approximates the limit simultaneously in all Hölder-type spaces $C^{\rho}$ with $\rho\gg \rho^*$.


We proceed to specify the abstract Theorem \ref{thm:tail-convergence} for the statistically interesting partial sum process $\widetilde{S}_n$.
If the random variables $\eta_t$ are sub-Gaussian, we obtain the following extension of Donsker's Theorem.
\begin{corollary}\label{cor:Donsker}
    Let $\eta_t$ be iid centered random variables with unit variance such that $\E \exp(r\eta_t)\leq \exp(\frac{r^2}{C^2})$. 
    Then the interpolated partial sum process $\widetilde{S}_n$ satisfies \eqref{eqn:wconv} with threshold $C$, exponent $\alpha=2$, and the limit process $W$ is a standard Brownian motion.
\end{corollary}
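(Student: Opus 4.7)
The plan is to apply Theorem~\ref{thm:tail-convergence} with $W_n=\widetilde{S}_n$, modulus $\rho_0=\rho_2$, exponent $\alpha=2$, and limiting process $W$ a standard Brownian motion. Three ingredients need to be verified: (i)~the factorization property \eqref{eqn:rho-fac} for $\rho_0=\rho_2$, (ii)~the sub-Weibull tail bound \eqref{eqn:tail-tight-alpha} for $\widetilde{S}_n$, and (iii)~the finite-dimensional convergence $\widetilde{S}_n\wconv_{\mathrm{fidi}} B$. Item (i) is exactly Lemma~\ref{Lemma:SchillVarianteWeibull}, which the paper invokes in the discussion immediately preceding the corollary. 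Item (iii) is the classical multivariate CLT: for any $0\leq u_1<\ldots<u_k\leq 1$, the increments $\widetilde{S}_n(u_j)-\widetilde{S}_n(u_{j-1})$ are normalized sums over disjoint blocks of the iid sequence $\eta_t$, up to a linear-interpolation error of order $n^{-1/2}$ that vanishes in probability, so the Lindeberg CLT (trivially available for iid summands with finite variance) gives joint convergence to independent centered Gaussians with variances $u_j-u_{j-1}$.

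The principal calculation is (ii). Writing $\widetilde{S}_n(u)-\widetilde{S}_n(v)=n^{-1/2}\sum_{t=1}^{n} a_t(u,v)\eta_t$ with coefficients $a_t(u,v)\in[0,1]$ satisfying $\sum_t a_t(u,v)^2\leq n|u-v|+2$, the independence of $(\eta_t)$ and the sub-Gaussian MGF hypothesis give
\begin{align*}
\E\exp\!\bigl(r(\widetilde{S}_n(u)-\widetilde{S}_n(v))\bigr)\leq \exp\!\left(\frac{r^2(|u-v|+2/n)}{C^2}\right),
\end{align*}
from which Chernoff's optimization delivers
\begin{align*}
P\bigl(|\widetilde{S}_n(u)-\widetilde{S}_n(v)|>s\bigr) \leq 2\exp\!\left(-\frac{s^2C^2}{4(|u-v|+2/n)}\right).
\end{align*}
Setting $s=rC'\sqrt{|u-v|}$ for a suitable constant $C'$ derived from $C$ produces the Gaussian-type bound $K\exp(-r^2)$ required by \eqref{eqn:tail-tight-alpha}. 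Combining (i)--(iii), Theorem~\ref{thm:tail-convergence} yields both conclusions of \eqref{eqn:wconv} with $W=B$, the limit being identified as Brownian motion by the fidi convergence together with path continuity.

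The main obstacle is tracking the constants tightly enough that the announced threshold in the corollary matches the sub-Gaussian parameter $C$ of the hypothesis exactly. The $2/n$ correction from the linear interpolation of $\widetilde{S}_n$ at the endpoints of $(v,u]$ must be absorbed without loosening the Chernoff bound. The cleanest route is to first reduce to the lattice-valued Hölder seminorm via Remark~\ref{rem:discretization}, on which $\sum_t a_t(u,v)^2 = n|u-v|$ exactly, obtain the sharp constant there, and then transfer back by the equivalence of limit distributions asserted in that remark.
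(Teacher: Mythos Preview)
Your approach matches the paper's: both write $\widetilde{S}_n(v)-\widetilde{S}_n(u)=n^{-1/2}\sum_t w_t\,\eta_t$ and derive the tail bound \eqref{eqn:tail-tight} from the sub-Gaussian MGF hypothesis (the paper simply invokes Hoeffding's inequality), with \eqref{eqn:rho-fac} and the fidi convergence treated as routine.

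Your stated ``main obstacle'' dissolves once you sharpen your weight estimate: the interpolation coefficients satisfy $w_t\in[0,1]$ and $\sum_t w_t = n|u-v|$ \emph{exactly} for all $u,v\in[0,1]$ (check the two cases $\lfloor un\rfloor<\lfloor vn\rfloor$ and $\lfloor un\rfloor=\lfloor vn\rfloor$ directly), whence $\sum_t w_t^2 \leq \sum_t w_t = n|u-v|$ with no $+2$ correction. The Chernoff bound then reads $P(|\widetilde{S}_n(u)-\widetilde{S}_n(v)|>s)\leq 2\exp(-s^2C^2/(4|u-v|))$ on the nose, and the detour through Remark~\ref{rem:discretization} is unnecessary.
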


What happens in the lower part of the distribution in \eqref{eqn:wconv}? The following proposition shows that the thresholding is not just an artifact of our proof, but indeed necessary. 
In other words, the lower part of the distribution depends on the distribution of $S_n$ and might converge to various different limits depending on the tails of $W_n$. 
This theoretical finding is also supported by simulation results depicted in Figure \ref{fig:ExceedancePlot}.
\begin{proposition}\label{prop:lower-tail}
    For any $T>0$, there exist sub-Gaussian iid random variables $\eta_t$ with $\Var(\eta_t)=1, \E(\eta_t)=0$, and $\|\eta_t\|_{\psi_2}<\infty$, such that Theorem \ref{thm:tail-convergence} applies with $\rho_0=\rho_2$, but $\liminf_{n\to\infty} P(|\widetilde{S}_n|_{\rho_2}\geq T) = 1$.
\end{proposition}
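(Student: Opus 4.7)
Given $T>0$, the plan is to revisit the construction of Proposition \ref{prop:invalid} and show that, by choosing $p$ sufficiently small, the resulting sub-Gaussian noise already produces a single-coordinate increment big enough to force the Hölder seminorm of $\widetilde{S}_n$ above $T$. Concretely, fix $p\in(0,1)$ with $p<2/T^2$ and set $\eta_t=\epsilon_t Z_t$, where $\epsilon_t\sim\mathrm{Bin}(1,p)$ and $Z_t\sim\mathcal{N}(0,1/p)$ are independent. Then $\E \eta_t=0$, $\Var(\eta_t)=1$, and a direct computation of $\E\exp(\eta_t^2/K^2)=(1-p)+p/\sqrt{1-2/(pK^2)}$ for $K^2>2/p$ shows $\|\eta_t\|_{\psi_2}\leq c/\sqrt{p}<\infty$, so $\eta_t$ is genuinely sub-Gaussian with a parameter depending only on $p$, not on $n$.

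Next I verify the hypotheses of Theorem \ref{thm:tail-convergence} with $\rho_0=\rho_2$. Because $\widetilde{S}_n(u)-\widetilde{S}_n(v)$ is (up to a boundary interpolation term) a rescaled sum of at most $n|u-v|+1$ iid sub-Gaussians with $\psi_2$-norm bounded by $c/\sqrt{p}$, the standard Hoeffding bound yields
\begin{align*}
P\bigl(|\widetilde{S}_n(u)-\widetilde{S}_n(v)|>r\,C_p\sqrt{|u-v|}\bigr)\leq 2\exp(-r^2),\qquad u,v\in[0,1],\;r>0,
\end{align*}
for a constant $C_p$ depending only on $p$. This is exactly \eqref{eqn:tail-tight-alpha} with $\alpha=2$, and via Lemma \ref{Lemma:SchillVarianteWeibull} it implies \eqref{eqn:tail-tight} for $\rho_0=\rho_2$ with a suitable threshold $C$. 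Hence Theorem \ref{thm:tail-convergence} indeed applies.

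To produce the lower bound on $|\widetilde{S}_n|_{\rho_2}$, I evaluate only at consecutive lattice points $u=(t-1)/n$, $v=t/n$. There $\widetilde{S}_n(v)-\widetilde{S}_n(u)=\eta_t/\sqrt{n}$ and $\rho_2(1/n)=\sqrt{\log(en)/n}$, so
\begin{align*}
|\widetilde{S}_n|_{\rho_2}\geq \max_{1\leq t\leq n}\frac{|\eta_t|}{\sqrt{\log(en)}}.
\end{align*}
It now remains to control this maximum. Let $N_n=\sum_{t=1}^n \epsilon_t\sim\mathrm{Bin}(n,p)$; by the law of large numbers, $P(N_n\geq np/2)\to 1$. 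Conditionally on $\{N_n=m\}$, the nonzero $\eta_t$'s are iid $\mathcal{N}(0,1/p)$, and classical Gaussian extreme value theory gives $\max_{i\leq m}|\eta_{(i)}|/\sqrt{2\log(m)/p}\to 1$ in probability as $m\to\infty$. Combining these two facts,
\begin{align*}
\frac{\max_t|\eta_t|}{\sqrt{\log(en)}}\;\pconv\;\sqrt{\tfrac{2}{p}},
\end{align*}
since $\sqrt{2\log(np/2)/p}/\sqrt{\log(en)}\to\sqrt{2/p}$ as $n\to\infty$. Because $\sqrt{2/p}>T$ by the choice of $p$, I conclude $\liminf_{n}P(|\widetilde{S}_n|_{\rho_2}\geq T)=1$.

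The only real subtlety is the joint handling of the random sparsity $N_n$ and the Gaussian extreme value asymptotics; this is routine and can be made rigorous by first conditioning on the event $\{N_n\geq np/2\}$ and then applying a standard tail bound for the maximum of iid Gaussians (e.g.\ $P(\max_{i\leq m}|Z_i|\leq x)=(2\Phi(x\sqrt{p})-1)^m$), so I do not foresee any obstacle beyond bookkeeping.
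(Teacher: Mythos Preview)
Your proof is correct and follows essentially the same approach as the paper: construct a Bernoulli--sub-Gaussian mixture with small $p$, lower-bound $|\widetilde{S}_n|_{\rho_2}$ by single-coordinate increments $\max_t|\eta_t|/\sqrt{\log(en)}$, and show this maximum exceeds $T$ with probability tending to one. The only cosmetic differences are that the paper uses a component with exact tails $P(|\epsilon_t|>r)=\exp(-r^2)$ rather than a Gaussian, and it proceeds via a direct computation $P(\max_t|\eta_t|\leq c\sqrt{1+\log n})\leq(1-pn^{-(c\sigma\sqrt{p})^2})^n\to 0$ for $c<1/(\sigma\sqrt{p})$, whereas you appeal to Gaussian extreme value asymptotics; both yield the same threshold $\sqrt{2/p}$ up to constants.
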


An essential statistical benefit of the multiplicatively weighted statistic $T_n^*$ is that its limit theory can be readily extended to other sampling situations. 
Here, we study a general nonlinear, non-stationary time series given by the very general Bernoulli shift model $\eta_t = G_t(\beps_t)$ for $\beps_t = (\epsilon_t, \epsilon_{t-1},\ldots)$ with $\epsilon_i \sim U(0,1)$ iid random seeds. 
To quantify the temporal dependence, we introduce $\beps_{t,h}=(\epsilon_t,\ldots, \tilde{\epsilon}_{t-h},\ldots)$ for an independent copy $\tilde{\epsilon}_i\sim U(0,1)$.
Following the general idea of \cite{Wu2005}, we define the physical dependence measure w.r.t.\ the sub-Gaussian norm $\|X\|_{\psi_2} = \inf\{c:\E \exp(X^2/c^2)\leq 2\}$ as
\begin{align*}
    \delta_{\psi_2}(h) = \sup_t \| G_t(\beps_t) - G_t(\beps_{t,h})\|_{\psi_2}.
\end{align*}
With this concept, we obtain the following novel sub-Gaussian concentration bound for dependent data, which might be of independent interest.

\begin{theorem}[Sub-Gaussian concentration with dependence]\label{thm:concentration-dependent}
    There exists a universal $K$ such that for any centered time series $\eta_t$ of the form $\eta_t=G_t(\beps_t)$,
    \begin{align*}
        \left\| \sum_{t=1}^n w_t \eta_t \right\|_{\psi_2} \leq K\sqrt{\sum_{t=1}^n |w_t|^2} \left(\sum_{j=1}^\infty \sqrt{j} \delta_{\psi_2}(j)\right),
    \end{align*}
    where $w_t\in\R$ is a sequence of weights.
\end{theorem}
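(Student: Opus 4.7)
The plan is to establish the bound via an $m$-dependent telescoping approximation of $\eta_t$. I define the truncation $\eta_t^{(m)} := \E[\eta_t \mid \epsilon_t, \ldots, \epsilon_{t-m+1}]$ for $m \geq 1$ and $\eta_t^{(0)} := 0$, which agrees with $\E[\eta_t] = 0$, so that $\eta_t = \sum_{m \geq 1} \Delta_t^{(m)}$ with $\Delta_t^{(m)} := \eta_t^{(m)} - \eta_t^{(m-1)}$; the series converges under the summability assumption implicit in the right-hand side of the claim. Since $\eta_t^{(m)}$ is a measurable function of only the $m$ consecutive innovations $\epsilon_t, \ldots, \epsilon_{t-m+1}$, the process $\{\Delta_t^{(m)}\}_t$ is $m$-dependent in $t$. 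Interchanging summations gives
\begin{align*}
\sum_{t=1}^n w_t \eta_t = \sum_{m=1}^\infty V_m, \qquad V_m := \sum_{t=1}^n w_t \Delta_t^{(m)},
\end{align*}
so the task reduces to bounding $\|V_m\|_{\psi_2}$ uniformly and then summing over $m$.

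The key coupling step controls $\|\Delta_t^{(m)}\|_{\psi_2}$. Introduce the single-coordinate perturbation $\eta_t^* := G_t(\beps_{t,m-1})$, which differs from $\eta_t$ only in position $t-m+1$, so that $\|\eta_t - \eta_t^*\|_{\psi_2} \leq \delta_{\psi_2}(m-1)$ directly by definition of the physical dependence measure. Because $\eta_t^*$ does not depend on $\epsilon_{t-m+1}$ and $\tilde\epsilon_{t-m+1}$ is $U(0,1)$-distributed and independent of everything else, a direct computation using the tower property gives $\E[\eta_t^* \mid \epsilon_t, \ldots, \epsilon_{t-m+1}] = \E[\eta_t \mid \epsilon_t, \ldots, \epsilon_{t-m+2}] = \eta_t^{(m-1)}$. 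Consequently $\Delta_t^{(m)} = \E[\eta_t - \eta_t^* \mid \epsilon_t, \ldots, \epsilon_{t-m+1}]$, and conditional Jensen for the $\psi_2$-norm (which follows from $|\E[X \mid \mathcal{F}]|^2 \leq \E[X^2 \mid \mathcal{F}]$ combined with convexity of $\exp$) delivers $\|\Delta_t^{(m)}\|_{\psi_2} \leq \delta_{\psi_2}(m-1)$.

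Next I exploit the $m$-dependence to bound $\|V_m\|_{\psi_2}$ by partitioning the index set into the $m$ residue classes modulo $m$. Within each class $\{t \equiv r \pmod{m}\}$ the variables $\Delta_t^{(m)}$ are independent centered sub-Gaussian random variables with $\psi_2$-norm at most $\delta_{\psi_2}(m-1)$, so the classical Hoeffding bound for independent sub-Gaussian sums yields
\begin{align*}
\Bigl\| \sum_{t \equiv r\,(\mathrm{mod}\,m)} w_t \Delta_t^{(m)} \Bigr\|_{\psi_2} \leq K_0 \,\delta_{\psi_2}(m-1)\, \sqrt{\sum_{t \equiv r} w_t^2}.
\end{align*}
Summing over $r$ via the triangle inequality for $\|\cdot\|_{\psi_2}$ and then applying Cauchy-Schwarz in the form $\sum_{r=0}^{m-1} \sqrt{\sum_{t \equiv r} w_t^2} \leq \sqrt{m \sum_t w_t^2}$ gives $\|V_m\|_{\psi_2} \leq K_0 \sqrt{m}\sqrt{\sum_t w_t^2}\,\delta_{\psi_2}(m-1)$. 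A final triangle inequality over $m$ produces
\begin{align*}
\Bigl\| \sum_{t=1}^n w_t \eta_t \Bigr\|_{\psi_2} \leq K_0 \sqrt{\sum_t w_t^2}\, \sum_{m=1}^\infty \sqrt{m}\, \delta_{\psi_2}(m-1),
\end{align*}
which after reindexing $j := m-1$ and the elementary estimate $\sqrt{j+1} \leq \sqrt{2}\sqrt{j}$ for $j \geq 1$ matches the claimed bound, with the $m=1$ contribution absorbed into the universal constant $K$.

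The main technical obstacle is that naive martingale decompositions, for instance via the Wu projections $\mathcal{P}_{t-j}\eta_t := \E[\eta_t \mid \mathcal{F}_{t-j}] - \E[\eta_t \mid \mathcal{F}_{t-j-1}]$, furnish only unconditional $\psi_2$-bounds on the martingale differences, which in general do not upgrade to the conditional moment generating function estimates needed for martingale sub-Gaussian concentration. The $m$-dependent truncation sidesteps this difficulty by turning each $V_m$ into a sum over a genuinely $m$-dependent sequence, at the cost of a $\sqrt{m}$ penalty from Cauchy-Schwarz across the residue classes; this penalty is precisely the origin of the $\sqrt{j}$ factor in the final bound.
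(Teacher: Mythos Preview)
Your proof is correct and follows essentially the same route as the paper: telescope $\eta_t$ through the conditional expectations $\eta_t^{(m)}$, bound the increments by $\delta_{\psi_2}$ via the coupling/conditional-Jensen argument, and then exploit the resulting $m$-dependence to reduce to independent sub-Gaussian sums. The only cosmetic difference is in how the $m$-dependence is handled---the paper groups the terms into consecutive blocks of length $j$ and splits them into odd and even (Cauchy--Schwarz \emph{within} a block produces the $\sqrt{j}$), whereas you partition by residue classes modulo $m$ (Cauchy--Schwarz \emph{across} the $m$ classes produces the $\sqrt{m}$); both are standard and yield the same factor.

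One small caveat: your claim that the $m=1$ contribution, which carries $\delta_{\psi_2}(0)$, can be ``absorbed into the universal constant $K$'' is not right, since $\delta_{\psi_2}(0)$ depends on the process and is not universal. The paper's telescoping has the same indexing slip (it drops the $S_{n,0}$ term). An honest version of the bound should read $\sum_{j\geq 0}\sqrt{j\vee 1}\,\delta_{\psi_2}(j)$, which does not affect any downstream application.
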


\begin{remark}
    The physical dependence measure is usually defined in terms of $L_p(P)$ norms instead of sub-Gaussian Orlicz norms, and denoted as $\delta_p(j)$.
    For the latter, it holds \citep{Liu2013}
    \begin{align*}
        \left\| \sum_{t=1}^n \eta_t \right\|_{L_p} \leq C \sqrt{n} \sum_{j=0}^\infty \delta_{p}(j).
    \end{align*}
    This bound differs from Theorem \ref{thm:concentration-dependent} by a factor $\sqrt{j}$ in the series. 
    This difference may be explained as follows: 
    For iid random variables $Z_i$, we have $\|\sum_{i=1}^k Z_i\|_{L_p} \leq  C \sqrt{k}\max_i \|Z\|_{L_p}$, and $\|\sum_{i=1}^k Z_i\|_{\psi_2} \leq C \sqrt{k}\max_i \|Z_i\|_{\psi_2}$. 
    For the $L_p$ norm, this inequality remains valid if the $Z_i$ are martingale differences \citep{Pinelis1994a}.
    On the other hand, extending the sub-Gaussian inequality to martingales requires a uniform bound on the sub-Gaussian norm of $Z_i$ conditional on $\mathcal{F}_{i-1}$, that is
    \begin{equation*}
        \| Z_i\|_{\psi_2,\mathcal{F}_{i-1}}:= \inf\left\{ c>0\,:\, \E\left( \exp(Z_i^2/c)|\mathcal{F}_{i-1}\right) \leq 2 \right\},
    \end{equation*}
    similar to the Azuma--Hoeffding inequality.
    This is more restrictive than imposing an upper bound on $\|Z_i\|_{\psi_2}$.
    In contrast, our novel formulation of the sub-Gaussian physical dependence does not impose this kind of uniformity, and this weaker assumption leads to the extra term $\sqrt{j}$ in the concentration inequalities.
\end{remark}

To ensure that the partial sum process of nonstationary random variables has a well-defined limit, we introduce the additional assumption that the noise process is locally stationary. 
This concept was initially introduced by \cite{Dahlhaus1997}, see also \cite{Dahlhaus2017}, and consists of rescaling the non-stationarity in $t\in\{1,\ldots, n\}$ to relative times $t/n\in[0,1]$.
Specifically, we suppose that $\eta_t = \eta_{t,n} = G_{t,n}(\beps_t)$ forms a triangular array of random variables, and for any $u\in[0,1]$, there exists a measurable $\tilde{G}_u:\R^\infty\to \R$ such that
\begin{align}
    \int_0^1 \left\| G_{\lfloor un\rfloor, n}(\beps_0) - \tilde{G}_u(\beps_0) \right\|_{L_2}\, du \longrightarrow 0. \label{ass:LS1} \tag{LS-1}
\end{align}
Moreover, we impose bounded variation of the mapping $t\mapsto G_{t,n}$, that is,
\begin{align}
     \sup_n \left(\|G_{1,n}(\beps_0)\|_{L_2} + \sum_{t=2}^n \|G_{t,n}(\beps_0)-G_{t-1,n}(\beps_0)\|_{L_2}\right)  \;<\;\infty. \label{ass:LS2} \tag{LS-2}
\end{align}
The formulation of local stationarity in terms of the Bernoulli shift model is due to \cite{Wu2011} under the condition that $G_{t,n}=\tilde{G}_{t/n}$ and $u\mapsto \tilde{G}_u$ is Lipschitz, and \cite{Zhou2013} extended this to piecewise-Lipschitz with finitely many discontinuities. 
In prior work \citep{mies_functional_2023}, we relaxed this regularity requirement to bounded $p$-variations, with \eqref{ass:LS2} being a special case for $p=1$. 
Assumption \eqref{ass:LS1} was introduced in \cite{mies_strong_2024} as a relaxation of the assumption that $G_{\lfloor un\rfloor, n} \to \tilde{G}_u$ uniformly in $u$.
Under the additional assumption 
\begin{align}
    \delta_{\psi_2}(h) = \mathcal{O}(h^{-\beta})\label{ass:LS3}\tag{LS-3}
\end{align}
for some $\beta>1$, we find that $\widetilde{S}_n(u)\wconv B(\Sigma(u))\deq \int_0^u \sigma_\infty(v)\, dB_v$ for a standard Brownian motion $B$ and $\Sigma(u)=\int_0^u \sigma^2_\infty(v)\, dv$, where $\sigma^2_\infty(v) = \sum_{h=-\infty}^\infty \Cov(\tilde{G}_v(\beps_0), \tilde{G}_v(\beps_h))$ is the local long-run variance.
If we combine this with the novel concentration inequality of Theorem \ref{thm:concentration-dependent}, we are able to derive a multiscale central limit theorem.

\begin{corollary}\label{cor:timeseries}
    Suppose that $\eta_t=G_{t,n}(\beps_t)$ satisfies \eqref{ass:LS1}, \eqref{ass:LS2}, and \eqref{ass:LS3} for some $\beta>\frac{3}{2}$. 
    Then the interpolated partial sum process $\widetilde{S}_n$ satisfies \eqref{eqn:wconv} for some threshold $C$, and exponent $\alpha=2$. and the limit process is $W(u)=B(\Sigma(u))$.
\end{corollary}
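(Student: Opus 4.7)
The plan is to apply Theorem \ref{thm:tail-convergence} with $W_n=\widetilde{S}_n$ and $\rho_0=\rho_2$, so that \eqref{eqn:wconv} follows once we verify three ingredients: the regularity property \eqref{eqn:rho-fac}, the tail tightness \eqref{eqn:tail-tight}, and weak convergence of the finite dimensional distributions of $\widetilde{S}_n$ to $B\circ\Sigma$.

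Property \eqref{eqn:rho-fac} for $\rho_2$ is precisely Lemma \ref{Lemma:SchillVarianteWeibull} referenced after \eqref{eqn:tail-tight-alpha}, and since $\rho_0=\rho_2$ it suffices to check \eqref{eqn:tail-tight-alpha} with $\alpha=2$, from which \eqref{eqn:tail-tight} follows via the substitution $r=(t/C)\sqrt{\log(e/h)}$ explained in the paper. For fixed $u<v$ in $[0,1]$, the increment admits a weighted-sum representation $\widetilde{S}_n(v)-\widetilde{S}_n(u)=\sum_{t=1}^n w_t\,\eta_t$ with weights $|w_t|\leq 1/\sqrt n$ supported on at most $\lfloor vn\rfloor-\lfloor un\rfloor+2$ indices, whence $\sum_t w_t^2\leq |v-u|+2/n$. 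Applying Theorem \ref{thm:concentration-dependent} yields
\begin{align*}
    \|\widetilde{S}_n(v)-\widetilde{S}_n(u)\|_{\psi_2}\leq K\sqrt{|v-u|+2/n}\cdot M,\qquad M:=\sum_{j=1}^\infty\sqrt{j}\,\delta_{\psi_2}(j),
\end{align*}
and the series $M$ converges under \eqref{ass:LS3}, because $\beta>3/2$ gives $\sqrt{j}\,\delta_{\psi_2}(j)=O(j^{1/2-\beta})$. For $|v-u|\geq 1/n$ the prefactor is bounded by $\sqrt{3|v-u|}$. For $|v-u|<1/n$, the increment is linear on a single interpolation segment, $|\widetilde{S}_n(v)-\widetilde{S}_n(u)|=\sqrt n\,|v-u|\,|\eta_{\lfloor un\rfloor+1}|$, and $\sqrt n\,|v-u|\leq \sqrt{|v-u|}$ since $n|v-u|\leq 1$; uniform sub-Gaussianity of $\eta_{t,n}$ follows from Theorem \ref{thm:concentration-dependent} applied to a single weight. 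A standard Chernoff bound then converts the resulting sub-Gaussian estimate into \eqref{eqn:tail-tight-alpha} with threshold proportional to $KM$.

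For the finite dimensional convergence $\widetilde{S}_n\wconv_{\mathrm{fidi}} B\circ\Sigma$ under \eqref{ass:LS1}--\eqref{ass:LS3}, I would invoke the central limit theorem for locally stationary Bernoulli shifts established in \cite{mies_strong_2024}: an arbitrary linear combination of increments is first reduced to a stationary approximation via \eqref{ass:LS1} and \eqref{ass:LS2}, and then to a martingale array via a projection decomposition, whose limiting variance is identified as $\int \sigma_\infty^2$ on the relevant intervals. With all three ingredients in hand, Theorem \ref{thm:tail-convergence} applied to $\widetilde{S}_n$ delivers the claim \eqref{eqn:wconv} with $\alpha=2$ and threshold proportional to $KM$. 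The main obstacle is the uniform sub-Gaussian increment bound across the full range of $|v-u|$, especially in the short-distance regime $|v-u|<1/n$ where the interpolation dominates; this is elementary but requires the careful bookkeeping indicated above, together with the new concentration inequality of Theorem \ref{thm:concentration-dependent}, which is the only place where the strengthening to $\beta>3/2$ (beyond the $\beta>1$ needed for \eqref{ass:LS3}-type fidi results) is used.
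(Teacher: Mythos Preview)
Your proposal is correct and matches the paper's approach: verify \eqref{eqn:tail-tight} via Theorem \ref{thm:concentration-dependent} and obtain the fidi convergence from \cite{mies_strong_2024}. One simplification the paper uses: writing $\widetilde{S}_n(v)-\widetilde{S}_n(u)=\tfrac{1}{\sqrt n}\sum_t w_{n,t}\eta_t$ with $w_{n,t}\in[0,1]$ and $\sum_t w_{n,t}=n|v-u|$, the inequality $\sum_t w_{n,t}^2\leq\sum_t w_{n,t}=n|v-u|$ gives $\|\widetilde{S}_n(v)-\widetilde{S}_n(u)\|_{\psi_2}\leq KM\sqrt{|v-u|}$ directly, eliminating your case split (and the slightly inaccurate claim that $|v-u|<1/n$ forces both endpoints into a single interpolation segment).
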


Besides the thresholded weak convergence for the critical modulus $\rho_2$, Corollary \ref{cor:timeseries} yields a Hölderian invariance principle for dependent data.
Unlike the results of \cite{hamadouche_invariance_2000} and \cite{rackauskas_holderian_2009}, we impose ergodicity in the form of the physical dependence measure, instead of mixing conditions or models in terms of linear processes. 
Moreover, we also allow for nonstationarity.

\section{Statistical applications}\label{sec:statistics}

\subsection{Signal discovery}\label{sec:stat-signal}

Returning to the signal discovery problem described in the introduction, we consider the setting of iid noise terms $\eta_t$ which are sub-Gaussian such that $\E \exp(r\eta_t) \leq \exp(\frac{r^2}{C_\eta^2})$, but not necessarily normally distributed. 
The multiplicatively weighted multiscale test rejects the null hypothesis $\mathbb{H}_0:\mu_n=0$ for large values of $T_n^* = \max_I T_n(I)/\sqrt{\log \frac{en}{|I|}} = |S_n|_{\rho_2}$, which converges weakly beyond a threshold to $\sigma |B|_{\rho_2}$.
The variance $\sigma^2$ may be estimated as $\widehat{\sigma}_n^2 = \frac{1}{n-1} \sum_{t=2}^{n} (Y_t-Y_{t-1})^2/2$.
Compared to the usual sample variance, this estimator is consistent under the alternative as long as $\mu_n\ll \sqrt{n}$.
As a result, we obtain a feasible test which consistently detects a signal at the optimal rate.
To choose a critical value for a significance level $\alpha\in(0,1)$, we denote by $q_\alpha$ the $(1-\alpha)$-quantile of $|B|_{\rho_2}$ for a standard Brownian motion $B$, which is tabulated in the first row of Table \ref{tab:quantiles}. 
Note that $|B|_{\rho_2} \geq \sqrt{2}$ almost surely, such that $q_\alpha>\sqrt{2}$,

\begin{table}[tb]
    \centering
    \footnotesize
    \begin{tabular}{lrrrrr}
    \toprule
         $\alpha$& $10\%$ & $5\%$ & $1\%$ & $0.1\%$ & $0.01\%$  \\ \midrule
         $\rho_{2,0}=\rho_{2}$ & 2.384 & 2.601 & 3.084 & 3.695 & 4.229\\ 
         $\rho_{2,50}$ & 0.641 & 0.661 & 0.704 & 0.758 & 0.808\\
         $\rho_{2,100}$ & 0.468 & 0.484 & 0.516 & 0.559 & 0.599 \\
         $\rho_{2,500}$ & 0.216 & 0.222 & 0.237 & 0.256 & 0.274\\
         $\rho_{2,1000}$ & 0.153 & 0.158 & 0.169 & 0.182 & 0.191 \\ \midrule
        \multicolumn{2}{l}{Sparse grid $\mathcal{G}_{\text{RW}}$} \\ \midrule   
        $\rho_{2,0}=\rho_{2}$ & 2.173 & 2.370 & 2.824 & 3.408 & 3.928\\ 
        $\rho_{2,50}$  &0.636 & 0.656 &  0.700 & 0.753 & 0.807\\
        $\rho_{2,100}$ & 0.466 & 0.481 & 0.513 &  0.556 & 0.599 \\
        $\rho_{2,500}$ & 0.215 &  0.222 & 0.236 & 0.256 & 0.273 \\
        $\rho_{2,1000}$ & 0.153 & 0.158 & 0.168 & 0.181 & 0.193  \\ \midrule
        \multicolumn{2}{l}{Sparse grid $\mathcal{G}_{\text{dyadic}}$} \\ \midrule
        $\rho_{2,0}=\rho_{2}$ & 1.907 & 2.118 & 2.631 & 3.316 & 3.904\\
        $\rho_{2,50}$ & 0.586 & 0.606 & 0.650 & 0.706 & 0.761 \\
        $\rho_{2,100}$ & 0.431 & 0.446 & 0.479 & 0.518 & 0.557 \\
        $\rho_{2,500}$ & 0.199 & 0.206 & 0.221 & 0.239 & 0.254 \\
        $\rho_{2,1000}$ & 0.141&  0.146 & 0.156 & 0.170 & 0.182  \\
        \bottomrule
    \end{tabular}
    \caption{$(1-\alpha)$-quantiles of $|B|_{\rho_{2,a}}$ evaluated on various grids based on $10^5$ simulations, discretizing the Brownian motion via $10^4$ grid points.}
    \label{tab:quantiles}
\end{table}

\begin{theorem}\label{thm:signal}
    Under $\mathbb{H}_0$, for any $\alpha$ small enough such that $ q_\alpha > C_\eta$, the test maintains size $\alpha$ asymptotically: 
    \begin{align*}
        \limsup_{n\to\infty} P(T_n^* > \widehat{\sigma}_n q_\alpha) \leq \alpha
    \end{align*}
    Under the sequence of alternatives $f_n(t) = \mu_n \mathds{1}(t\in I_n)$ with $I_n = [a_n, a_n+l_n)\subset [1,n]$, such that $\mu_n^2l_n \gg \log(\frac{en}{l_n})$ and $\mu_n\ll \sqrt{n}$, the test is consistent:
    \begin{align}
        \lim_{n\to\infty}P(T_n^* > \widehat{\sigma}_n q_\alpha) \to 1.\label{eqn:power-signal}
    \end{align}
\end{theorem}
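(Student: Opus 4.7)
The plan is to treat the size and power statements separately, combining Corollary \ref{cor:Donsker} with a direct signal--noise decomposition of $S_n$.

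For the size claim, note that under $\mathbb{H}_0$ one has $Y_t = \eta_t$ and hence $T_n^* = |\widetilde{S}_n|_{\rho_2}$. Two ingredients suffice. First, a direct second-moment law-of-large-numbers argument applied to the squared differences $(\eta_t - \eta_{t-1})^2/2$ yields $\widehat{\sigma}_n^2 \plim \sigma^2$. Second, applying Corollary \ref{cor:Donsker} to the standardised noise $\eta_t/\sigma$ delivers the thresholded weak convergence $|\widetilde{S}_n|_{\rho_2}/\sigma \wconv_{C_\eta} |B|_{\rho_2}$. Writing
\[
P(T_n^* > \widehat{\sigma}_n q_\alpha) \;=\; P\bigl( |\widetilde{S}_n|_{\rho_2}/\sigma > (\widehat{\sigma}_n/\sigma)\, q_\alpha \bigr),
\]
I would conclude by a sandwich: for $\eta>0$ small enough that $q_\alpha - \eta > C_\eta$ and both $q_\alpha\pm\eta$ are continuity points of the distribution of $|B|_{\rho_2}$, the event $\{(\widehat{\sigma}_n/\sigma) q_\alpha \in (q_\alpha - \eta, q_\alpha + \eta)\}$ has probability tending to one, so the thresholded convergence may be invoked at both endpoints; letting $\eta \downarrow 0$ produces the size bound.

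For power, I would decompose $S_n = M_n + \widetilde{S}_n$, where $M_n$ is the interpolated partial sum process of the deterministic signal $f_n$. The reverse triangle inequality for seminorms gives $T_n^* = |S_n|_{\rho_2} \geq |M_n|_{\rho_2} - |\widetilde{S}_n|_{\rho_2}$. Taking $u = a_n/n$ and $v = (a_n + l_n)/n$, a direct computation shows $M_n(u)=0$ and $M_n(v) = \mu_n l_n/\sqrt{n}$, while $\rho_2(|u-v|) = \sqrt{(l_n/n)\log(en/l_n)}$, so
\[
|M_n|_{\rho_2} \;\geq\; \frac{\mu_n \sqrt{l_n}}{\sqrt{\log(en/l_n)}} \;\longrightarrow\; \infty
\]
under the hypothesis $\mu_n^2 l_n \gg \log(en/l_n)$. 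Meanwhile Corollary \ref{cor:Donsker} together with the stochastic boundedness in Theorem \ref{thm:tail-convergence} gives $|\widetilde{S}_n|_{\rho_2} = \mathcal{O}_P(1)$, and a short calculation shows the alternative contributes only $\mathcal{O}(\mu_n^2/n) = o(1)$ to $\widehat{\sigma}_n^2$ thanks to $\mu_n \ll \sqrt{n}$, so $\widehat{\sigma}_n q_\alpha \plim \sigma q_\alpha$. Consistency \eqref{eqn:power-signal} follows at once.

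The main delicacy is the Slutsky-type step in the size proof: standard Slutsky does not apply to thresholded weak convergence because the tail convergence only holds for $t > C_\eta$. This is precisely the role of the assumption $q_\alpha > C_\eta$: it guarantees that the random perturbation $(\widehat{\sigma}_n/\sigma) q_\alpha$ stays above the forbidden region with probability tending to one, so the sandwich argument remains entirely within the regime of validity of \eqref{eqn:wconv}.
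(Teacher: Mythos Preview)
Your proposal is correct and follows essentially the same route as the paper: consistency of $\widehat{\sigma}_n^2$ under both hypotheses, Corollary \ref{cor:Donsker} for the thresholded limit under the null, and divergence of the signal contribution on the interval $I_n$ under the alternative. The only cosmetic difference is that the paper packages your sandwich as a one-line Slutsky argument applied to the equivalent characterisation $(T_n^*/\widehat{\sigma}_n)\vee\tau \wconv |B|_{\rho_2}\vee\tau$, which is ordinary weak convergence and therefore admits the standard Slutsky lemma directly; your explicit $\eta$-sandwich is simply this step written out by hand.
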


To compare this procedure with the multiscale methodology of \cite{dumbgen_multiscale_2001}, suppose for simplicity that $\sigma=1$ is known.
Our test rejects the null if $T_n(I)\geq \zeta^{\textsc{mult}}(|I|)=q_\alpha \sqrt{\log ({en}/{|I|})}$, whereas the test based on $T_n^{\text{DS}}$ rejects if $T_n(I)\geq \zeta^{\text{DS}}(|I|)=\sqrt{2} \sqrt{\log (en/|I|)} + c_\alpha$ for a critical value $c_\alpha>0$. 
Since $q_\alpha>\sqrt{2}$, the second threshold is sharper for short intervals $|I|$. 
However, it is only valid for Gaussian errors, while our new threshold rule is robust to non-Gaussian errors. 
To further illustrate this difference in robustness, we try to construct a threshold rule of the form $\zeta^{\text{DS}}_A(|I|)=A \sqrt{\log (en/|I|)} + c_\alpha$ for some fixed $A>\sqrt{2}$. 
The following result shows that this is theoretically possible but statistically infeasible, as the value of $A$ will depend on the unknown sub-Gaussian tail bound.

\begin{proposition}\label{prop:DS-A}
    For any $A> C_\eta$, 
    \begin{align*}
        \sup_I \left\{T_n(I) - A \sqrt{\log \tfrac{en}{|I|}}\right\}_+ \;\wconv\; \sup_{u\leq v}\left\{ \tfrac{|B(v)-B(u)|}{\sqrt{v-u}} - A \sqrt{\log \tfrac{e}{|u-v|} } \right\}_+.
    \end{align*}
    On the other hand, for any $A\geq 0$, there exist iid sub-Gaussian random variables $\eta_t$ such that $\sup_I \left\{T_n(I) - A \sqrt{\log \frac{en}{|I|}}\right\}_+ \to \infty$ in probability.
\end{proposition}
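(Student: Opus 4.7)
My plan is to express the statistic as a functional of the interpolated partial sum process and analyze its two parts separately. Define
\begin{align*}
F_A(f) := \sup_{0 \leq u < v \leq 1} \left\{ \frac{|f(v)-f(u)|}{\sqrt{v-u}} - A\sqrt{\log\tfrac{e}{v-u}} \right\}_+, \qquad f \in C[0,1],
\end{align*}
so that $\sup_I \{T_n(I) - A\sqrt{\log(en/|I|)}\}_+ = F_A(S_n)$; the restriction to integer endpoints is inessential by Remark \ref{rem:discretization}. The target limit is $F_A(B)$ since $\sigma = 1$ under the implicit normalization $\Var(\eta_t)=1$.

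For the first claim, I would combine the ordinary invariance principle on $C[0,1]$ with the modulus-of-continuity bound \eqref{eqn:cont-modulus} from Theorem \ref{thm:tail-convergence}. Introduce the truncated functional $F_A^h(f) = \sup_{|u-v| \geq h}\{\,\cdot\,\}_+$, which is Lipschitz with respect to the uniform norm on $C[0,1]$ because the denominator $\sqrt{v-u}$ is bounded below by $\sqrt{h}$. Donsker's theorem and the continuous mapping theorem then give $F_A^h(S_n) \wconv F_A^h(B)$ for each fixed $h > 0$. To bridge from $F_A^h$ back to $F_A$, fix any $t$ with $C_\eta < t < A$ and set $\delta = A - t > 0$. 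On the event $\{\sup_{|u-v| \leq h} |S_n(u) - S_n(v)|/\rho_2(|u-v|) \leq t\}$, every increment with $|u-v| \leq h$ satisfies
\begin{align*}
\frac{|S_n(u)-S_n(v)|}{\sqrt{|u-v|}} - A\sqrt{\log\tfrac{e}{v-u}} \leq -\delta\sqrt{\log\tfrac{e}{v-u}} \leq -\delta,
\end{align*}
so the positive part vanishes and $F_A(S_n) = F_A^h(S_n)$. By \eqref{eqn:cont-modulus}, the probability of the complementary event tends to zero as $h \to 0$, uniformly in $n$. The analogous statement for the Brownian limit follows from Levy's modulus of continuity, which ensures $|B|_{\rho_2} < \infty$ almost surely. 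A standard Slutsky-type approximation then yields $F_A(S_n) \wconv F_A(B)$.

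For the second claim, I adapt the construction from Proposition \ref{prop:invalid}: set $\eta_t = \epsilon_t Z_t$ with independent $\epsilon_t \sim \text{Bin}(1,p)$ and $Z_t \sim \mathcal{N}(0, 1/p)$, so that $\Var(\eta_t) = 1$ and $\eta_t$ is sub-Gaussian with norm growing as $p \downarrow 0$. Restricting the supremum to singleton intervals $I = (t-1, t]$ yields the lower bound $F_A(S_n) \geq \max_{1 \leq t \leq n} |\eta_t| - A\sqrt{\log(en)}$. Conditional on $N = \sum_t \epsilon_t$, which concentrates around $np$, the nonzero $|\eta_t|$ are iid half-normals scaled by $1/\sqrt{p}$, so extreme value asymptotics give $\max_t |\eta_t| = \sqrt{2\log(np)/p}\,(1+o_P(1))$. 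Choosing $p < 2/A^2$ makes $\sqrt{2/p} > A$, so $\max_t |\eta_t| - A\sqrt{\log(en)}$ diverges at rate $\sqrt{\log n}$ in probability, completing the counterexample. The main technical obstacle is the reduction from $F_A$ to $F_A^h$, which relies crucially on matching the strict inequality $A > C_\eta$ to the sub-Gaussian threshold appearing in \eqref{eqn:cont-modulus}; without this matching, small scales can contribute unpredictably and spoil the clean continuous mapping argument.
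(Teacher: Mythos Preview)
Your proposal is correct and follows essentially the same route as the paper: truncate to $|u-v|\geq h$, apply Donsker together with the continuous mapping theorem to the truncated functional, use \eqref{eqn:cont-modulus} to show that short scales contribute nothing to the positive part uniformly in $n$, and conclude via the standard approximation lemma \citep[Thm.~3.2]{Billingsley1999}. For the second claim the paper simply cites Proposition~\ref{prop:lower-tail}, whose proof is precisely your Bernoulli--Gaussian mixture construction together with the lower bound coming from singleton intervals.
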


While both threshold rules, $\zeta^{\text{DS}}_A$ and $\zeta^{\textsc{mult}}$, are only consistent in the regime $\mu_n^2 l_n \gg \log(n/l_n)$, the former puts slightly more weight on shorter signals. 
Thus, we expect a different assignment of statistical power to the length scales.
To quantify the power against signals of length $l_n$ in finite samples, \cite{walther_calibrating_2022} suggested the \textit{realised exponent} $e_n(l_n)$ defined as
\begin{align*}
    e_n(l_n) = \frac{\mu_{\min}(l_n)}{\sqrt{2}\rho_2(l_n)},
\end{align*}
where $\mu_{\min}(l_n)\geq 0$ is the smallest value such that the test with significance level $\alpha=10\%$ has $80\%$ power against the alternative $f_n(t) = \mu_n\mathds{1}(t\in I_n)$ for a randomly placed interval $I_n\subset[1,n]$ of length $l_n$.
Table \ref{tab:realised-exponent} presents the realised exponents for different threshold rules and Gaussian errors for sample size $n=10^4$. 
SCAN refers to a constant threshold $\zeta^{\textsc{scan}}(|I|)=d_{\alpha,n}$ as a quantile of the uncorrected scan statistic $\sup_{I} T_n(I)$, which necessarily depends on $n$. 
The comparsion reveals that the threshold $\zeta^{\textsc{mult}}$ corresponding to the modulus $\rho_2=\rho_{2,0}$ has less power on shorter length scales. 
To increase the power of the multiplicatively weighted procedure, we consider the alternative modulus of continuity $\rho_{2,a}(h) = \sqrt{h (a+\log(e /h))}$ for $a\geq 0$. 
Since $\rho_2(h)/\rho_{2,a}(h)\to 1$ as $h\to 0$, the theory of Section \ref{sec:weaktail} as well as Theorem \ref{thm:signal} still apply, and Table \ref{tab:quantiles} presents the corresponding critical values. 
Table \ref{tab:realised-exponent} shows that the use of $\rho_{2,a}$ improves the finite sample performance against short signals, being competitive with the DS threshold. In particular, the use of larger $a$ increases the performance of the test based on $|S_n|_{\rho_{2,a}}$ for shorter signals further, though loosing performance at longer signals, converging for increasing $a$ to the performance of SCAN.
In contrast to the benchmark thresholds, the central benefit of our proposed threshold scheme is its validity for non-Gaussian errors thanks to the new asymptotic theory.

\begin{table}[tb]
\centering
\footnotesize
    \begin{tabular}{c|cccccccc}
        \toprule
         $|I_n|$&1&5&10&15&50&100&500&1000  \\ 
         \midrule
         \multicolumn{9}{c}{\textbf{Full grid }$\mathcal{G}$} \\ \midrule
        SCAN & 1.52 & 1.75 & 1.80 & 1.89 & 2.08 &2.25 & 3.08 & 3.74 \\
        DS & 2.21& 2.32& 2.27 &2.22& 2.24& 2.20 &2.22 &2.38\\
         $\rho_{2,0}$& 3.42 &3.42 & 3.38 & 3.27 & 3.15 & 3.04 & 2.81 & 2.58\\
         $\rho_{2,50}$& 1.62&1.74&1.79&1.89&2.03&2.19&2.89&3.46\\
         $\rho_{2,100}$& 1.58&1.76&1.76&1.90&2.06&2.25&3.03&3.56\\
         $\rho_{2,500}$&1.54 & 1.69 & 1.77 & 1.88 & 2.05 & 2.28 &3.13 & 3.69 \\ 
         $\rho_{2,1000}$&1.54 & 1.72 &1.86 & 1.87 & 2.07 & 2.31 & 3.16 & 3.72 \\ 
         $\rho_{2,10^6}$&1.54 & 1.73 &1.79 & 1.84 & 2.12 & 2.29 & 3.09 & 3.74 \\ \midrule
        \multicolumn{9}{c}{\textbf{Sparse grid} $\mathcal{G}_{\text{RW}}$} \\ \midrule 
        SCAN &1.53 & 1.71 & 1.80 & 1.86 & 2.14 & 2.41 & 3.35 & 4.14\\
        DS &2.10& 2.12 &2.10 &2.08& 2.10& 2.14& 2.29 &2.38\\
        $\rho_{2,0}$ & 2.90 & 2.86 & 2.81 &2.78 &2.70 &2.67 &2.44 &2.41\\
        $\rho_{2,50}$& 1.57 & 1.73 &1.79 & 1.83& 2.08& 2.31& 3.14 &3.79\\
        $\rho_{2,100}$& 1.57 & 1.73& 1.77& 1.82 &2.12& 2.34 &3.24 &3.96\\
        $\rho_{2,500}$& 1.55 & 1.71& 1.78 &1.85 &2.15& 2.39 &3.36
        & 4.05\\
        $\rho_{2,1000}$& 1.54 &1.72& 1.80 &1.85& 2.15 &2.43&3.37& 4.13\\
        \midrule
            \multicolumn{9}{c}{\textbf{Dyadic grid} $\mathcal{G}_{\text{dyadic}}$} \\ \midrule 
        SCAN & 2.81 & 1.88& 2.05 &2.21& 2.57& 2.94& 4.15 &4.99 \\
        DS & 3.55 &2.27& 2.34 &2.38 &2.45 &2.46& 2.61 &2.66 \\
        $\rho_{2,0}$ & 4.57& 2.90& 2.93& 2.96 &2.90 &2.90& 2.75& 2.69\\
        $\rho_{2,50}$& 2.85 &1.88 &2.03 &2.19& 2.52 &2.78 &3.82& 4.55\\
        $\rho_{2,100}$& 2.84 &1.89 &2.04 &2.21& 2.53 &2.83& 3.96 &4.76\\
        $\rho_{2,500}$& 2.80 &1.90& 2.06& 2.21 &2.55 &2.90 &4.09 &4.96\\
        $\rho_{2,1000}$& 2.79 &1.90 &2.05& 2.20 &2.59& 2.89& 4.13& 4.99\\
         \bottomrule
    \end{tabular}
    \caption{Realized exponents for Gaussian noise and different threshold rules. We determine the realized exponents based on $2000$ simulations for the full grid, and based on $10^4$ simulations for the sparse grid $\mathcal{G}_{\text{RW}}$ and the dyadic grid $\mathcal{G}_{\text{dyadic}}$.}
    \label{tab:realised-exponent}
\end{table}


\subsection{Goodness-of-fit testing}\label{sec:gof}

The signal discovery problem can be interpreted as testing for a very specific mean function $f_n(t)=0$ in \eqref{eqn:model-signal}. 
More generally, we may perform a goodness-of-fit test for a class of functions $\Fhyp_0\subset\{ f:\{1,\ldots, n\} \to \R \}$.
A simple example consists of parametric classes $\{ f(t)=a+bt\,|\,a,b\in\R \}$ or $\{ f(t) = a \sin(rt)\,|\, a\in\R, r>0 \}$.
Alternatively, we may also test certain shape constraints, such as monotonicity $\Fhyp_{\uparrow}=\{ f\,:\, f(t)\leq f(t+1) \}$, convexity $\Fhyp_{\text{conv}}=\{f(t+1)-f(t) \geq f(t)-f(t-1)\}$, or non-negativity $\Fhyp_{\geq 0}=\{f\geq 0\}$, which are also studied by \cite{dumbgen_multiscale_2001}.
Other shape constraints include unimodality \citep{chatterjee_adaptive_2019}, or S-shape \citep{feng_nonparametric_2022}. 
The choice $\Fhyp_0=\{0\}$ recovers the signal discovery problem, and the case of a singleton null $\Fhyp_0=\{f_0\}$ has been treated by \cite{rohde_adaptive_2008} via a multiscale methodology for symmetric errors.
In this Section, we propose a generic testing procedure which is based on our novel probabilistic results, and which can be used to perform goodness-of-fit tests for all classes discussed above.

As multiscale test statistic for the composite null hypothesis $\mathbb{H}_0: f_n\in \Fhyp_0$, we suggest
\begin{align*}
    T_n^*(\Fhyp_0) &= \inf_{f\in \Fhyp_0} T_n^*(f), \quad \text{where } T_n^*(f) = |S_n^f|_{\rho_2} \\
    \text{and} \quad S_n^f(u) &= \frac{1}{\sqrt{n}} \sum_{t=1}^{\lfloor un\rfloor} [Y_t-f(t)] + \frac{un-\lfloor un\rfloor}{\sqrt{n}} [Y_{\lfloor un\rfloor +1}-f(\lfloor un\rfloor +1)].
\end{align*}
Under the null, we have $T_n^*(\Fhyp_0) \leq T_n^*(f_n) = |\widetilde{S}_n|_{\rho_2}$, which converges weakly beyond a threshold by virtue of the results of Section \ref{sec:weaktail}. 
Thus, we may combine the critical values reported in Table \ref{tab:quantiles} with an estimate of the variance to construct an asymptotically valid multiscale test procedure for the goodness-of-fit problem.

\begin{theorem}\label{thm:gof}
    Suppose $\eta_t$ are iid sub-Gaussian such that $\E \exp(r\eta_t)\leq \exp(\frac{r^2}{C_\eta^2})$, and $\sum_{t=2}^n |f_n(t)-f_n(t-1)|^2 \ll n$.
    Under $\mathbb{H}_0$, for any $\alpha$ small enough such that $q_\alpha>C_\eta$,
    \begin{align*}
        \limsup_{n\to\infty} P\left( T_n^*(\Fhyp_0) > \widehat{\sigma}_n q_\alpha \right) \leq \alpha,
    \end{align*}
    for $q_\alpha$ and $\widehat{\sigma}_n^2$ as in Section \ref{sec:stat-signal}.
\end{theorem}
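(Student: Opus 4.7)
The plan is to reduce the goodness-of-fit statistic to the noise-only process $|\widetilde{S}_n|_{\rho_2}$, to which Corollary \ref{cor:Donsker} applies, and to handle the random scaling $\widehat{\sigma}_n$ by a Slutsky-style argument. This closely parallels the proof of Theorem \ref{thm:signal}, and indeed the special case $\Fhyp_0=\{0\}$ is precisely that theorem.

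First, since $\mathbb{H}_0$ asserts $f_n\in \Fhyp_0$, I would use the defining infimum to obtain
\begin{align*}
T_n^*(\Fhyp_0) \;\le\; T_n^*(f_n) \;=\; |S_n^{f_n}|_{\rho_2} \;=\; |\widetilde{S}_n|_{\rho_2},
\end{align*}
where the last equality uses $Y_t - f_n(t) = \eta_t$ so that $S_n^{f_n}$ coincides with $\widetilde{S}_n$. This reduces all distributional reasoning to the noise partial sum process, for which the framework of Section \ref{sec:weaktail} is tailor-made.

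Next, I would verify $\widehat{\sigma}_n^2 \plim \sigma^2$ despite the nuisance function $f_n$. Expanding $(Y_t-Y_{t-1})^2 = (f_n(t)-f_n(t-1))^2 + 2(f_n(t)-f_n(t-1))(\eta_t-\eta_{t-1}) + (\eta_t-\eta_{t-1})^2$, the first sum is $o(n)$ by the hypothesis $\sum_{t=2}^n |f_n(t)-f_n(t-1)|^2 \ll n$; the third sum divided by $2(n-1)$ tends to $\sigma^2$ by the strong law of large numbers applied to the iid stationary sequence $(\eta_t-\eta_{t-1})^2$ (whose mean is $2\sigma^2$); the cross term is $o_P(n)$ by Cauchy--Schwarz combined with the two preceding estimates.

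Finally, Corollary \ref{cor:Donsker} yields $|\widetilde{S}_n|_{\rho_2}/\sigma \wconv_\tau |B|_{\rho_2}$ beyond a threshold determined by the sub-Gaussian constant $C_\eta$. For any small $\epsilon>0$, on the high-probability event $A_\epsilon=\{|\widehat{\sigma}_n/\sigma-1|<\epsilon\}$ one has the inclusion $\{|\widetilde{S}_n|_{\rho_2}>\widehat{\sigma}_n q_\alpha\}\cap A_\epsilon \subseteq \{|\widetilde{S}_n|_{\rho_2}/\sigma > (1-\epsilon) q_\alpha\}$, so combining with the infimum bound from the first step gives
\begin{align*}
\limsup_{n\to\infty} P\bigl(T_n^*(\Fhyp_0)>\widehat{\sigma}_n q_\alpha\bigr)\;\le\; P\bigl(|B|_{\rho_2}>(1-\epsilon) q_\alpha\bigr),
\end{align*}
provided $(1-\epsilon) q_\alpha$ still exceeds the weak-convergence threshold. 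Letting $\epsilon\downarrow 0$ and using continuity of the distribution of $|B|_{\rho_2}$ at $q_\alpha$ yields the claimed bound $\alpha$. The main obstacle is precisely this interplay between the Slutsky perturbation and the thresholded weak convergence: the multiplicative perturbation by $\widehat{\sigma}_n$ must not push the effective quantile below the threshold governed by $C_\eta$, and the strict inequality $q_\alpha>C_\eta$ in the hypothesis is exactly what provides the necessary cushion, explaining the qualification ``$\alpha$ small enough'' in the statement.
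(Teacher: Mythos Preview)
Your proposal is correct and follows essentially the same route as the paper: bound $T_n^*(\Fhyp_0)\le T_n^*(f_n)=|\widetilde S_n|_{\rho_2}$, establish consistency of $\widehat\sigma_n^2$ via the square expansion and Cauchy--Schwarz, then invoke Corollary \ref{cor:Donsker} together with a Slutsky argument exactly as in Theorem \ref{thm:signal}. One small slip: the sequence $(\eta_t-\eta_{t-1})^2$ is not iid but $1$-dependent stationary, so the law of large numbers should be justified accordingly (e.g.\ by splitting into odd and even indices, or by the ergodic theorem); this does not affect the conclusion.
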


In addition to a global statement about the structural properties of $f_n$, one is often also interested in its local shape.
That is, we want to infer if $f_n$ is convex or monotone or non-negative on a given interval $I$.
The goodness-of-fit test may also be localized to obtain such insight into the qualitative nature of the regression function. 
To this end, for any interval $I\subset[1,n]$ and a candidate class $\Fhyp_0$, we define the localization
\begin{align*}
    \Fhyp_0^I = \{ f:\{1,\ldots, n\} \to \R \,|\, \exists \tilde{f}\in\Fhyp_0 \text{ such that } \tilde{f}(t)=f(t) \text{ for all } t\in I \} \supset \Fhyp_0.
\end{align*}
Since the localization is a weaker requirement, we have $T_n^*(\Fhyp_0^I) \leq T_n^*(\Fhyp_0)$ and $\sup_{I} T_n^*(\Fhyp_0^I) = T_n^*(\Fhyp_0)$.
This means that the critical value $\widehat{\sigma}_n q_\alpha$ allows for testing all local hypotheses simultaneously while controlling the type I error. 
Inverting these tests yields the class of intervals
\begin{align*}
    \mathcal{I}_n = \left\{ I \,|\, T_n^*(\Fhyp_0^I) > \widehat{\sigma}_n \cdot q_\alpha \right\}.
\end{align*}
In line with \cite{fryzlewicz_narrowest_2024}, we call any $I\in \mathcal{I}_n$ an interval of significance. 
For example, if $\Fhyp_0$ is the class of increasing functions, then $I\in \mathcal{I}_n$ is interpreted as evidence that the function $f_n$ has a strict local minimum or maximum in $I$. 

\begin{proposition}\label{prop:localized}
    Under the conditions of Theorem \ref{thm:gof}, with asymptotic probability at least $1-\alpha$, all intervals of significance are true discoveries, that is
    \begin{align*}
        \limsup_{n\to\infty} P\left( f_n\notin \Fhyp_0^I \text{ for all } I\in \mathcal{I}_n \right) \geq 1-\alpha.
    \end{align*}
\end{proposition}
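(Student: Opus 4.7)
The plan is to reduce the simultaneous error control across all intervals $I$ to the single distributional bound already established in Theorem \ref{thm:gof}. The key structural observation is that whenever the true regression function $f_n$ lies in the localized null class $\Fhyp_0^I$, the monotonicity
\begin{align*}
    T_n^*(\Fhyp_0^I) \;=\; \inf_{f\in\Fhyp_0^I} T_n^*(f) \;\leq\; T_n^*(f_n) \;=\; |\widetilde{S}_n|_{\rho_2}
\end{align*}
holds, because $f_n$ is itself a feasible point of the infimum defining $T_n^*(\Fhyp_0^I)$, and because $S_n^{f_n}=\widetilde{S}_n$ under the model $Y_t=f_n(t)+\eta_t$.

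Next I would rewrite the complement of the target event as $\mathcal{E}_n = \{\exists\, I \in \mathcal{I}_n \text{ with } f_n \in \Fhyp_0^I\}$. On $\mathcal{E}_n$, any such witnessing interval $I$ satisfies $T_n^*(\Fhyp_0^I) > \widehat{\sigma}_n q_\alpha$ by the defining property of $\mathcal{I}_n$, and simultaneously $T_n^*(\Fhyp_0^I) \leq |\widetilde{S}_n|_{\rho_2}$ by the monotonicity just displayed. Chaining these two inequalities gives the set inclusion
\begin{align*}
    \mathcal{E}_n \;\subseteq\; \left\{|\widetilde{S}_n|_{\rho_2} > \widehat{\sigma}_n q_\alpha\right\}.
\end{align*}
The crucial point is that the critical value $\widehat{\sigma}_n q_\alpha$ is identical across all $I$, so family-wise error control collapses to controlling a single event on the partial sum process, with no union bound over intervals required.

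The remaining step is to apply Theorem \ref{thm:gof} essentially verbatim: combining the consistency $\widehat{\sigma}_n \to \sigma$ in probability under the smoothness hypothesis $\sum_t |f_n(t)-f_n(t-1)|^2 \ll n$ with the thresholded weak convergence $|\widetilde{S}_n|_{\rho_2}/\sigma \wconv_{C_\eta} |B|_{\rho_2}$ supplied by Corollary \ref{cor:Donsker}, a Slutsky-type argument yields $\limsup_n P(|\widetilde{S}_n|_{\rho_2} > \widehat{\sigma}_n q_\alpha) \leq P(|B|_{\rho_2} > q_\alpha) = \alpha$, provided $q_\alpha > C_\eta$ as assumed. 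Taking complements gives the desired lower bound on the $\limsup$. I do not anticipate any substantial technical obstacle: the proof is essentially the one-line inclusion above, followed by an appeal to Theorem \ref{thm:gof}. The only subtlety worth flagging is that the hypothesis $q_\alpha > C_\eta$ is precisely what places the calibration quantile inside the upper regime where thresholded weak convergence is valid, and it is inherited directly from the hypotheses of Theorem \ref{thm:gof}.
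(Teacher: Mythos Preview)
Your proposal is correct and follows essentially the same approach as the paper: both arguments reduce the complement event $\{\exists I\in\mathcal{I}_n: f_n\in\Fhyp_0^I\}$ to the single event $\{T_n^*(f_n)>\widehat{\sigma}_n q_\alpha\}$ via the monotonicity $T_n^*(\Fhyp_0^I)\leq T_n^*(f_n)$ whenever $f_n\in\Fhyp_0^I$, and then invoke Theorem~\ref{thm:gof}. The paper's version phrases the inclusion as a chain of probability inequalities through $\sup_{I:f_n\in\Fhyp_0^I} T_n^*(\Fhyp_0^I)$, but the logical content is identical to your direct set inclusion.
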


The asymptotic power of the goodness-of-fit test can be analyzed for special cases upon identifying alternatives of interest. 
In the next section, we pursue this for the commonly studied problem of multiple changepoint testing, and show that our procedure achieves asymptotically optimal detection performance.

\subsection{Multiple changepoint detection}\label{sec:change}

In the multiple changepoint problem \citep{cho_data_2021}, the regression function $f_n$ is of the form
\begin{align}
    f_n(t) = \delta_0 + \sum_{k=1}^\kappa \delta_k \mathds{1}(t\geq \tau_k). \label{eqn:model-change}\tag{CP}
\end{align}
In words, $f_n$ is a step function with $\kappa$ jumps at locations $\tau_k\in \{2,\ldots, n\}$ and jump sizes $\delta_k\in\R$.
This model is common in applications as the changepoints $\tau_k$ can clearly be interpreted as points of interest, e.g.\ to identify regions with interesting copy number variations in DNA \citep{niu_screening_2012}, or changes in measurement techniques or observation locations in climate time series \citep{reeves_review_2007}.
The main statistical objective is to perform inference on (a) the number $\kappa$ of changes, and (b) the locations $\tau_k$.

For changepoint inference, Hölder-type statistics similar to our multiplicative thresholds have already been suggested by \citet{rackauskas_holder_2004}, \citet{rackauskas_estimating_2007}, and \citet{rackauskas_convergence_2020}, for the special case of an epidemic change, i.e.\ $\kappa=2$, although with a sub-optimal modulus of continuity.
An additive multiscale procedure for the multiple changepoint problem has been proposed by \cite{frick2014multiscale}. 
For any $J\in \N$, they formulate the problem as a goodness-of-fit test for $\Fhyp_0^J =\{ f \text{ of the form }\eqref{eqn:model-change} \text{ with $\kappa\leq J$} \}$, and estimate $\kappa$ by the smallest number $J$ such that the hypothesis $f_n\in\Fhyp_0^J$ is accepted. 
Confidence statements for $\tau_k$ may then be derived by test inversion, including all functions $f\in \Fhyp_0^J$ such that the null $f_n=f$ is not rejected. 
They use a test statistic of the form \eqref{eqn:DS}, and impose a lower bound of order $\log(n)^3$ on the interval length, leading to suboptimal performance for short-lived changes; see Proposition \ref{prop:powerless}.

Here, we derive confidence statements  following the alternative approach of narrowest significance pursuit, inspired by \cite{fryzlewicz_narrowest_2024}.
The central idea is to introduce the class $\Fhyp_{\text{const}} = \{ f_n \text{ constant} \}$ of constant regression functions, observing that $f$ has no changepoint in the interval $I$ if and only if $f\in \Fhyp_{\text{const}}^I$. 
As a special case of the goodness-of-fit test of Section \ref{sec:gof}, the localized changepoint test statistic takes the form
\begin{align*}
    T_n^*\left(\Fhyp_{\text{const}}^I\right) = 
    \inf_{a\in\R} \sup_{[un,vn]\subset I} \frac{\left|S_n(v)-S_n(u) - [v-u]a\right|}{\rho_2(v-u)}.
\end{align*}
As the right hand side is a convex function of $a$, the minimum is attained and may be determined numerically, e.g.\ via bisection.
We may then obtain intervals of significance for the change locations $\tau_k$ as 
\begin{align*}
    \mathcal{I}_n^\tau = \left\{ I\,|\, T_n^*\left(\Fhyp_{\text{const}}^I\right) > \widehat{\sigma}_n \cdot q_\alpha \right\},
\end{align*}
where $q_\alpha$ and $\widehat{\sigma}_n$ are as in Section \ref{sec:stat-signal}.
Note that although we borrow the terminology of \citet{fryzlewicz_narrowest_2024}, a similar idea of constructing confidence intervals is also described in \citet[Sec.~5.4]{verzelen_optimal_2023}. 
Subsequent works building on this idea include \citet{pilliat_optimal_2023}, \citet{fryzlewicz_robust_2024}, and \citet{gavioli-akilagun_fast_2025}.

Conditions for consistency of the procedure may be formulated in terms of $\tau_k$ and $\delta_k$, and our test turns out to achieve the optimal detection rates for this problem.
In the sequel, all parameters $\tau_k, \delta_k$, and $\kappa$ may depend on $n$ implicitly, and we denote by $\mathcal{D}=\{\tau_1,\ldots,\tau_\kappa\}\subset\{1,\ldots,n\}$ the set of changepoints. 
Moreover, we introduce the notation $L_k = \min(\tau_k-\tau_{k-1}, \tau_{k+1}-\tau_0)$ for the length of the $k$-th changepoint, where $\tau_0=0$ and $\tau_{\kappa+1}=n+1$.
Prior research into statistical lower bounds  \citep{arias-castro_detection_2011,chan_detection_2013,verzelen_optimal_2023} has revealed that the changepoint $\tau_k$ is only detectable if $\Delta_k^2 L_k \gg \log (n/L_k)$.
Thus, for any $z>0$, we define the set $\mathcal{D}(z)=\{ \tau_j\,|\, \delta_j^2 L_j \geq z \cdot \log(\frac{n}{L_j}) \} \subset \mathcal{D} $ of detectable changepoints, and for any $\tau\in \mathcal{D}(z)$, denote by $w(\tau, z) = w(\tau_k,z)= \inf\{ r\;|\; 2\delta_k^2 r \geq z \cdot \log(\frac{n}{2r}) \} \leq \frac{L_k}{2}$ its detectable locality.

\begin{theorem}\label{thm:changepoint}
    Under the conditions of Theorem \ref{thm:gof}, with asymptotic probability at least $1-\alpha$, all intervals of significance contain at least one changepoint, that is
    \begin{align*}
        \limsup_{n\to\infty} P\left( I \cap \mathcal{D} \neq \emptyset  \text{ for all } I\in \mathcal{I}_n^\tau\right) \geq 1-\alpha.
    \end{align*}
    Moreover, detectable changepoints are asymptotically isolated and located:
    \begin{align}
        \lim_{z\to \infty}\limsup_{n\to\infty}\;P\left(
        \begin{array}{c}
             \forall \tau\in \mathcal{D}(z)\; \exists I\in \mathcal{I}_n^\tau\text{ such that } \\ 
             I \cap \mathcal{D} = \{\tau\} \text{ and } |I| \leq  w(\tau,z)
        \end{array} \right) = 1. \label{eqn:power-change}
    \end{align}
\end{theorem}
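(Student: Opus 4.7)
The first assertion is essentially a corollary of Proposition~\ref{prop:localized} with $\Fhyp_0 = \Fhyp_{\mathrm{const}}$: by definition of the localisation, $f \in \Fhyp_{\mathrm{const}}^I$ if and only if $f$ is constant on $I$, and whenever $I \cap \mathcal{D} = \emptyset$ the piecewise-constant function $f_n$ is constant on $I$, so $f_n \in \Fhyp_{\mathrm{const}}^I$. Proposition~\ref{prop:localized} then excludes such $I$ from $\mathcal{I}_n^\tau$ with asymptotic probability at least $1-\alpha$. For the second assertion, the plan is to exhibit, for each detectable $\tau_k \in \mathcal{D}(z)$, an explicit candidate interval $I_k$ that lies in $\mathcal{I}_n^\tau$ with $I_k \cap \mathcal{D} = \{\tau_k\}$ and $|I_k| \leq w(\tau_k,z)$. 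I take the symmetric interval $I_k = [\tau_k - w_k/2,\, \tau_k + w_k/2]$ with $w_k = w(\tau_k, z)$; since $w_k \leq L_k/2$, $I_k$ lies in $(\tau_{k-1}, \tau_{k+1})$ and therefore isolates the single changepoint $\tau_k$.

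To show $T_n^*(\Fhyp_{\mathrm{const}}^{I_k}) > \widehat{\sigma}_n q_\alpha$ with probability tending to one, I decompose $S_n = F_n + \widetilde{S}_n$, where $F_n$ denotes the partial-sum process of $f_n$. Within $I_k$ I probe the two adjacent sub-intervals $J_\pm$ on either side of $\tau_k$, each of relative length $w_k/(2n)$; because both sub-intervals have the same length, the linear-trend term $(v-u)a$ cancels out of the difference of the corresponding $F_n$-increments, which equals $-w_k \delta_k / (2\sqrt{n})$ for every slope $a$. Using the elementary bound $\max(|x|,|y|) \geq |x-y|/2$ and dividing by $\rho_2(w_k/(2n))$ then yields the $a$-free lower bound
\begin{align*}
\inf_{a \in \R}\; \sup_{[u,v] \subset I_k} \frac{|F_n(v) - F_n(u) - (v-u)a|}{\rho_2(v-u)} \;\geq\; \frac{1}{4}\sqrt{\frac{2\delta_k^2\, w_k}{\log(2en/w_k)}}.
\end{align*}
Since $w_k \leq L_k/2 \leq n/4$, combining this with the defining inequality $2\delta_k^2 w_k \geq z\log(n/(2w_k))$ of $w_k$ turns the right-hand side into $c\sqrt{z}$ for a universal constant $c > 0$.

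For the noise, $|\widetilde{S}_n(v)-\widetilde{S}_n(u)|/\rho_2(v-u) \leq |\widetilde{S}_n|_{\rho_2}$ uniformly, so the triangle inequality gives
\begin{align*}
T_n^*(\Fhyp_{\mathrm{const}}^{I_k}) \;\geq\; c\sqrt{z} - |\widetilde{S}_n|_{\rho_2},
\end{align*}
simultaneously across all $\tau_k \in \mathcal{D}(z)$. By Corollary~\ref{cor:Donsker}, $|\widetilde{S}_n|_{\rho_2}$ is stochastically bounded, and $\widehat{\sigma}_n \to \sigma$ in probability under the assumption $\sum_t (f_n(t)-f_n(t-1))^2 = \sum_k \delta_k^2 \ll n$ inherited from Theorem~\ref{thm:gof}. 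Letting $n \to \infty$ and then $z \to \infty$ therefore drives the probability of $T_n^*(\Fhyp_{\mathrm{const}}^{I_k}) > \widehat{\sigma}_n q_\alpha$ to one, which yields \eqref{eqn:power-change}. The main subtlety I anticipate is the inner infimum $\inf_a$, which a priori allows an adversarial slope to absorb the deterministic signal; the two-subinterval cancellation trick above is the key device, producing a discrepancy invariant in $a$ that directly reflects the jump size $\delta_k$. The interpolation in $S_n$ at non-integer endpoints contributes only lower-order corrections that are absorbed into the constant $c$.
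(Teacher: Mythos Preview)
Your proposal is correct and follows essentially the same route as the paper: the same candidate interval $I_k=[\tau_k-w_k/2,\tau_k+w_k/2]$, the same decomposition $S_n=F_n+\widetilde{S}_n$, and the same final bound $T_n^*(\Fhyp_{\text{const}}^{I_k})\geq c\sqrt{z}-|\widetilde{S}_n|_{\rho_2}$. The only difference is in how the infimum over $a$ is handled: the paper does a case split on whether $a/\sqrt{n}\lessgtr \mu_k+\delta_k/2$ and then picks the left or right half-interval accordingly, while you compare the two half-intervals simultaneously via $\max(|x|,|y|)\geq|x-y|/2$, which cancels the $(v-u)a$ term directly---an equivalent and slightly cleaner device.
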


In comparison, \cite[Thm.~4.1]{fryzlewicz_narrowest_2024} detects the changepoint $\tau_k$ if $\delta_k^2L_k \gg \log(n)$ and localizes it by an interval with length of order $\log(n)/\delta_k^2$. 
For longer intervals resp.\ smaller changes, this localization and detection rate is suboptimal as \cite{fryzlewicz_narrowest_2024} uses a uniform threshold instead of a multiscale correction.
Incorporating the multiscale idea of \cite{dumbgen_multiscale_2001}, \cite{pilliat_optimal_2023} derive the same order $w(\tau_k,z)$ for the length of the interval, but for a different threshold rule which requires knowledge of the sub-Gaussian norm of the errors.
Theorem \ref{thm:changepoint} shows that our procedure attains the same localization rate, while being statistically feasible since we can specify critical values without knowing the exact sub-Gaussian bound. 
Moreover, our asymptotic treatment readily allows for other sampling settings, in particular non-stationary and temporally dependent data as detailed in the next subsection.

While the detectability condition for a changepoint matches the optimal lower bound of \cite{verzelen_optimal_2023}, the latter authors show that for Gaussian errors the localization rate can be improved to $\mathcal{O}(1/\delta_k^2)$. 
It is not clear if this sharper localization is also attainable for non-Gaussian errors.

The class $\mathcal{I}_n^\tau$ of significant intervals is highly redundant, and contains many overly large as well as intersecting intervals. 
To interpret the statistical findings, it is preferable to report disjoint intervals, specifically many small intervals, for high statistical power.
This can be achieved by a postprocessing of $\mathcal{I}_n^\tau$. 
For any class of intervals $\mathcal{I}$, define $\NSP(\mathcal{I})$ as the output of the following \textit{narrowest significance pursuit} routine:
\begin{enumerate}
    \item Initialize $\mathcal{I}^0=\mathcal{I}$.
    \item Choose $\widehat{I}_k$ as the shortest interval in $\mathcal{I}^{k-1}$, with arbitrary tie breaking.
    \item Update $\mathcal{I}^k= \{  I \in \mathcal{I}^{k-1}\,|\, I\cap \widehat{I}_k = \emptyset\}$.
    \item Terminate after iteration $K$ if $\mathcal{I}^K=\emptyset$, and return $\NSP(\mathcal{I}) = \{  \widehat{I}_1,\ldots, \widehat{I}_K\}$.
\end{enumerate}
This generic formulation is identical to Algorithm 1 of \cite{pilliat_optimal_2023}.

\begin{proposition}\label{prop:NSP}
    Suppose that a class of intervals $\mathcal{I}$, a subset $\widetilde{\mathcal{D}} \subset \mathcal{D}=\{\tau_1,\ldots, \tau_\kappa\}$, and non-negative real numbers $r(\tau)=r(\tau_k) \leq \frac{L_k}{2}$ satisfy
    \begin{itemize}
        \item[i.] $I \cap \mathcal{D} \neq \emptyset$ for all $I\in\mathcal{I}$, and
        \item[ii.] for any $\tau\in\widetilde{\mathcal{D}}$ there exist $I(\tau)\in\mathcal{I}$ such that $I(\tau)\cap \mathcal{D}=\{\tau\}$, and $|I(\tau)| \leq r(\tau)$.
    \end{itemize}
    Then the class $\NSP(\mathcal{I})=\{\widehat{I}_1,\ldots, \widehat{I}_{|\widetilde{\mathcal{D}}|}\}$ consists of exactly $|\widetilde{\mathcal{D}}|$ disjoint intervals, uniquely localizing all $\tau\in\widetilde{\mathcal{D}}$ in the sense of (ii).
\end{proposition}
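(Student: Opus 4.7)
The plan is to construct an injection $\phi:\widetilde{\mathcal{D}}\to\NSP(\mathcal{I})$ that sends each detectable changepoint to the output interval uniquely localizing it, and then to close the count via a surjectivity argument.

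Since $\mathcal{I}$ is finite, the routine terminates after some $K$ iterations with pairwise disjoint output $\widehat{I}_1,\ldots,\widehat{I}_K$ by step 3. Fix $\tau\in\widetilde{\mathcal{D}}$ and let $j(\tau)\in\{1,\ldots,K\}$ be the iteration at which $I(\tau)$ leaves the pool, either because $\widehat{I}_{j(\tau)}=I(\tau)$ is itself chosen in step 2, or because $\widehat{I}_{j(\tau)}$ overlaps $I(\tau)$ and removes it in step 3. In both cases $I(\tau)\in\mathcal{I}^{j(\tau)-1}$ at the moment $\widehat{I}_{j(\tau)}$ is selected, so the shortness rule gives $|\widehat{I}_{j(\tau)}|\leq|I(\tau)|\leq r(\tau)$; set $\phi(\tau)=\widehat{I}_{j(\tau)}$.

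The central step is to verify $\phi(\tau)\cap\mathcal{D}=\{\tau\}$. If $\widehat{I}_{j(\tau)}=I(\tau)$ the claim is just (ii); otherwise $\widehat{I}_{j(\tau)}$ and $I(\tau)$ overlap in an interval of positive length (adopting the natural convention that intersection in step 3 means overlap of positive length, consistent with the continuous indexing used in $\mathcal{I}_n^\tau$), so $I(\tau)\cup\widehat{I}_{j(\tau)}$ is a single interval with
\[
|I(\tau)\cup\widehat{I}_{j(\tau)}| \;<\; |I(\tau)|+|\widehat{I}_{j(\tau)}| \;\leq\; 2r(\tau)\;\leq\; L_k,\qquad \tau=\tau_k.
\]
For any $\tau'\in\widehat{I}_{j(\tau)}\cap\mathcal{D}$, nonempty by (i), both $\tau$ and $\tau'$ lie in this union, hence $|\tau-\tau'|<L_k$; the definition of $L_k$ as the minimum distance from $\tau_k$ to any other element of $\mathcal{D}$ then forces $\tau'=\tau$. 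Injectivity of $\phi$ is immediate: if $\phi(\tau)=\phi(\tau')$ for distinct $\tau,\tau'\in\widetilde{\mathcal{D}}$, the common interval would uniquely localize both, a contradiction.

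The main obstacle is surjectivity of $\phi$, which delivers the exact count $|\NSP(\mathcal{I})|=|\widetilde{\mathcal{D}}|$. Suppose $\widehat{I}_j$ is selected and contains some $\tau^*\in\widetilde{\mathcal{D}}$; then $I(\tau^*)$ must still belong to $\mathcal{I}^{j-1}$, since otherwise the earlier interval $\phi(\tau^*)$ would also contain $\tau^*$ and violate disjointness with $\widehat{I}_j$. Consequently $I(\tau^*)$ is removed at iteration $j$ and $\phi(\tau^*)=\widehat{I}_j$. Hence every selected interval hitting $\widetilde{\mathcal{D}}$ lies in the image of $\phi$, and surjectivity reduces to excluding selected intervals that contain only changepoints from $\mathcal{D}\setminus\widetilde{\mathcal{D}}$—an exclusion that in the context of Theorem \ref{thm:changepoint} follows from how the significance class $\mathcal{I}_n^\tau$ is generated by the multiscale statistic and the negligible contribution of non-detectable changepoints. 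This last step is the trickiest, as it cannot be read off from hypotheses (i) and (ii) alone and requires either an additional restriction on $\mathcal{I}$ (e.g.\ that every $I\in\mathcal{I}$ intersect $\widetilde{\mathcal{D}}$) or invocation of the specific structure supplied by the upstream results.
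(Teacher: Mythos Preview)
Your injection argument is sound and already delivers the lower bound $|\NSP(\mathcal{I})|\geq |\widetilde{\mathcal{D}}|$ together with the unique-localization claim for every $\tau\in\widetilde{\mathcal{D}}$. This is a different route from the paper's: the paper argues by induction on the remaining pool $\mathcal{I}^k$, maintaining a sandwich
\[
\bigcup_{\tau\in\widetilde{\mathcal{D}}\setminus\{\tau_{(1)},\ldots,\tau_{(k)}\}} \widetilde{\mathcal{I}}(\tau)\;\subseteq\;\mathcal{I}^k\;\subseteq\;\bigcup_{\tau\in\widetilde{\mathcal{D}}\setminus\{\tau_{(1)},\ldots,\tau_{(k)}\}} \mathcal{I}(\tau),
\]
with $\mathcal{I}(\tau)=\{I\in\mathcal{I}:\tau\in I\}$ and $\widetilde{\mathcal{I}}(\tau)=\{I\in\mathcal{I}(\tau):|I|\leq r(\tau)\}$, and concludes that the algorithm terminates after exactly $|\widetilde{\mathcal{D}}|$ steps. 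Your construction has the virtue that the $\geq$ direction and the localization do not rely on the upper inclusion in this sandwich.

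You have correctly identified a genuine gap in the \emph{exact} count $|\NSP(\mathcal{I})|=|\widetilde{\mathcal{D}}|$: hypotheses (i) and (ii) alone do not rule out some selected $\widehat{I}_j$ meeting only $\mathcal{D}\setminus\widetilde{\mathcal{D}}$. The paper's proof resolves this by opening with ``Because any $I\in\mathcal{I}$ contains at least one point $\tau\in\widetilde{\mathcal{D}}$'', which is precisely the strengthening of (i) you suggest and is what makes the upper inclusion hold---but this does not follow from (i) as stated, which only asserts $I\cap\mathcal{D}\neq\emptyset$. A concrete counterexample: take $\mathcal{D}=\{\tau_1,\tau_2\}$ well separated, $\widetilde{\mathcal{D}}=\{\tau_1\}$, and let $\mathcal{I}$ contain one short interval around each $\tau_i$; then (i) and (ii) hold but $|\NSP(\mathcal{I})|=2>1=|\widetilde{\mathcal{D}}|$. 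So your caveat is warranted: the proposition needs either (i) replaced by $I\cap\widetilde{\mathcal{D}}\neq\emptyset$ (this is how the paper's proof actually proceeds), or the conclusion weakened to ``at least $|\widetilde{\mathcal{D}}|$ intervals''. For the downstream use---that $|\NSP(\mathcal{I}_n^\tau)|$ is a lower confidence bound on $\kappa$, and that each detectable change is isolated---only the disjointness, (i), and your injection are needed, so nothing in the applications is affected.
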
 
That is, the reduced class $\NSP(\mathcal{I})$ satisfies the same statistical guarantees as the full class $\mathcal{I}$, while consisting of disjoint intervals.
In particular, $|\NSP(\mathcal{I}_n^\tau)|$ provides a lower confidence bound on the total number of changepoints.

\subsection{Critical values for nonstationary dependent errors}\label{sec:nonstationary}

The assumption of iid errors is too simplistic for many applications.
Instead, data is often heteroskedastic and temporally dependent, which may be modeled in terms of nonstationary time series.
Especially for changepoint inference, the need to account for nonstationarity was initially observed by \cite{Zhou2013}, who showed how to adapt the critical values of a standard CUSUM statistic. 
Subsequent works include \cite{Vogt2015}, \cite{Dette2018}, \cite{cui_estimation_2021}, and \cite{mies_functional_2023}.
None of these references consider a multiscale threshold, and hence do not achieve simultaneously optimal detection against changes of different lengths.
A multiscale procedure for heteroskedastic, independent Gaussian noise was suggested by \cite{pein_heterogeneous_2017}, however under the restriction that variance and mean change at the same time.
In the sequel, we show how our new asymptotic framework allows for extension to nonstationary time series, i.e.\ to dependent and heteroskedastic data.

If we postulate the model framework $\eta_t=\eta_{t,n}=G_{t,n}(\beps_t)$ introduced in Section \ref{sec:weaktail}, the distributional limit of the partial sum process $\widetilde{S}_n$ is an inhomogeneous Brownian motion $W(u)=B(\Sigma(u))$, and the limit of the goodness-of-fit statistic $T_n^*(f_n)$ is given by the random variable $|W|_{\rho_2}$, see Corollary \ref{cor:timeseries}.
As a consequence of the nonstationarity, determining critical values requires an estimate for the whole function $\Sigma(u)$, instead of just a single value.
To this end, we adapt the estimator of \cite{dette_multiscale_2020}, which is in turn inspired by \cite{wu_inference_2007}, to the nonstationary case.
Specifically, choose a window size $b_n$ such that $1\ll b_n \ll n$ and define
\begin{align*}
    \widehat{\Sigma}_n(u) = \frac{1}{2n}\sum_{t=b_n}^{ un -b_n} \left( \frac{1}{\sqrt{b_n}}\sum_{i=0}^{b_n-1} (Y_{t-i} - Y_{t+1+i}) \right)^2,
\end{align*}
for $u=\frac{i}{n}$, and interpolate linearly in between.
As for the iid case, differencing removes the signal $f_n$, while the block sum is introduced to capture the serial correlation, and partial summation mimics the functional form of $u\mapsto \Sigma(u)$. 
The quadratic terms $\widehat{\sigma}^2_n(\frac{t}{n})=( 1/\sqrt{b_n}\sum_{i=0}^{b_n-1} (Y_{t-i} - Y_{t+1+i}) )^2$ should be interpreted as a noisy estimate of the local long-run variance $\sigma_\infty^2(t/n)$.
Since $\widehat{\Sigma}_n(u)\to \Sigma(u)$ uniformly (Lemma \ref{lem:Sigmahat-LS}), we have $B(\widehat{\Sigma}_n(u))\to B(\Sigma(u))$ uniformly.
Thus, it would be natural to estimate the $(1-\alpha)$-quantile $q_\alpha$ of $|B(\Sigma(u))|_{\rho_2}$ by the corresponding quantile of $|B(\widehat{\Sigma}_n(u))|_{\rho_2}$. 
However, this approach does not yield consistent critical values, as the latter random variable diverges. 
In particular, Levy's Theorem on the modulus of continuity of the Brownian motion shows that $|B(\widehat{\Sigma}_n(u))|_{\rho_2} \geq \sqrt{2} \max_t \widehat{\sigma}_n^2(\frac{t}{n})$, which is stochastically unbounded.

To solve this issue and obtain asymptotically valid critical values, we suggest to use the $(1-\alpha)$-quantile $\widehat{q}_{\alpha,n}$, conditionally on $\widehat{\Sigma}_n$, of the random variable
\begin{align*}
    \sup_{|u-v|>c_n} \frac{|B(\widehat{\Sigma}_n(u)) - B(\widehat{\Sigma}_n(v))|}{\rho_2(|u-v|)},
\end{align*}
for a sequence $c_n$ tending to zero slowly. 
The lower bound on the interval length does lead to correct critical values because under the null, the very short intervals have a vanishing contribution to the distribution of the statistic, as a consequence of \eqref{eqn:cont-modulus}.
On the other hand, under the alternative, these short intervals might carry relevant information, and hence should be included when computing the test statistic.

\begin{theorem}\label{thm:changepoint-LS}
    Let $\eta_t=\eta_{t,n}$ be an array of locally stationary time series satisfying \eqref{ass:LS1}, \eqref{ass:LS2}, and \eqref{ass:LS3} for some $\beta>2$.
    Suppose moreover that $1\ll b_n\ll n$, and
    $\sum_{t=2}^n |f_n(t)-f_n(t-1)|^2 \leq v_n$ for a sequence $v_n\geq 1$, such that
    \begin{align*}
        1 \gg c_n \gg \sqrt{ \frac{v_n b_n}{n} }. 
    \end{align*}
    Then there exists a $\tau>0$, such that for all $\alpha$ small enough such that $q_\alpha>\tau$, we have $\widehat{q}_{\alpha,n} \to q_\alpha$ in probability.
\end{theorem}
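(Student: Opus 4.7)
The overarching strategy is to couple the conditional distribution of $M_n = \sup_{|u-v|>c_n} |B(\widehat{\Sigma}_n(u))-B(\widehat{\Sigma}_n(v))|/\rho_2(|u-v|)$ with the unconditional distribution of $|W|_{\rho_2}$, where $W = B\circ\Sigma$, and then invert a tail-wise (thresholded) distributional convergence into a quantile convergence. My plan is to pass through four steps. First, I would establish uniform consistency $\|\widehat{\Sigma}_n - \Sigma\|_\infty = o_P(c_n)$ by appealing to Lemma \ref{lem:Sigmahat-LS}: the signal bias is of order $\sqrt{v_n b_n/n} \ll c_n$ by assumption, while the stochastic term is controlled uniformly via local stationarity \eqref{ass:LS1}--\eqref{ass:LS2} together with the concentration inequality of Theorem \ref{thm:concentration-dependent} applied under \eqref{ass:LS3} with $\beta>2$.

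Second, with the data fixed, let $B$ denote the fresh Brownian motion used in the simulation and compare $M_n$ against
\[
\tilde M_n = \sup_{|u-v|>c_n}\frac{|B(\Sigma(u))-B(\Sigma(v))|}{\rho_2(|u-v|)}.
\]
By the triangle inequality
\[
|M_n - \tilde M_n| \leq \frac{2}{\rho_2(c_n)} \sup_{u\in[0,1]} |B(\widehat{\Sigma}_n(u)) - B(\Sigma(u))|,
\]
and Lévy's modulus of continuity of $B$ bounds the right hand side by a constant multiple of $\rho_2(\|\widehat\Sigma_n-\Sigma\|_\infty)/\rho_2(c_n)$ with overwhelming $B$-probability. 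Since $\rho_2$ is increasing and $\|\widehat\Sigma_n-\Sigma\|_\infty = o_P(c_n)$, this ratio vanishes in probability, so $M_n$ and $\tilde M_n$ are asymptotically equivalent conditionally on the data.

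Third, the limiting process $W$ is a time-changed Brownian motion and satisfies the tail bound \eqref{eqn:tail-tight-alpha} with $\alpha=2$ and a constant $C_W$ depending only on $\|\sigma_\infty\|_\infty$. Set $\tau := C_W$. Applying the modulus estimate \eqref{eqn:cont-modulus} of Theorem \ref{thm:tail-convergence} directly to $W$ gives
\[
P\Bigl(\sup_{|u-v|\leq c_n}\frac{|W(u)-W(v)|}{\rho_2(|u-v|)}>t\Bigr)\longrightarrow 0,\qquad t>\tau,
\]
which together with $|W|_{\rho_2} \geq \tilde M_n$ implies $|W|_{\rho_2}\vee t = \tilde M_n\vee t + o_P(1)$ for every $t>\tau$. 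Combined with Step 2, we obtain the tail-wise conditional convergence $P(M_n>t\mid \widehat\Sigma_n) \to P(|W|_{\rho_2}>t)$ in probability for every continuity point $t>\tau$ of the distribution function of $|W|_{\rho_2}$.

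Finally, the distribution function of $|W|_{\rho_2}$ is continuous on $(0,\infty)$ by standard absolute-continuity properties of Gaussian Hölder seminorms (the centered Gaussian vector $B$ has no atoms in the functional $|B\circ\Sigma|_{\rho_2}$). Hence the conditional survival function converges in probability at every $t>\tau$ to a continuous and strictly decreasing limit, so that a standard inversion argument (e.g.\ the continuous mapping theorem applied to the generalized inverse) yields $\widehat{q}_{\alpha,n}\to q_\alpha$ in probability for every $\alpha$ with $q_\alpha>\tau$. The main obstacle is Step 1: producing a uniform rate for $\|\widehat\Sigma_n-\Sigma\|_\infty$ under non-stationary, only polynomially decaying dependence, which requires the sub-Gaussian concentration of Theorem \ref{thm:concentration-dependent} together with a chaining argument in $u\in[0,1]$; the remaining steps are then essentially deterministic consequences of the thresholded convergence framework of Section \ref{sec:weaktail}.
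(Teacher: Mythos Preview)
Your Steps 2--4 are sound in outline, but Step 1 contains a genuine gap: the stated assumptions do \emph{not} give you the rate $\|\widehat{\Sigma}_n - \Sigma\|_\infty = o_P(c_n)$. Lemma \ref{lem:Sigmahat-LS} only provides $\|\widehat{\Sigma}_n - \Sigma\|_\infty \pconv 0$ without rate, and this limitation is structural. Decomposing $\widehat{\Sigma}_n(u) - \Sigma(u)$ into a stochastic fluctuation and the deterministic bias $\frac{1}{n}\sum_{t\leq un}\sigma^2_{\infty,n}(t) - \int_0^u \sigma^2_\infty(v)\,dv$, the latter tends to zero only by virtue of \eqref{ass:LS1}, which specifies no speed of convergence of $G_{\lfloor un\rfloor,n}$ towards $\tilde{G}_u$. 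No amount of chaining in $u$ or concentration via Theorem \ref{thm:concentration-dependent} can manufacture a rate where the assumption provides none. Consequently your coupling bound $|M_n-\tilde M_n| \lesssim \rho_2(\|\widehat{\Sigma}_n-\Sigma\|_\infty)/\rho_2(c_n)$ cannot be shown to vanish, and the argument collapses at this point.

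The paper's proof avoids ever requiring closeness of $\widehat{\Sigma}_n$ to $\Sigma$ at scale $c_n$. It uses a two-scale argument: for any fixed $c>0$, the rate-free uniform consistency already yields $Z_n(c,1)\wconv Z(c,1)$, and separately $Z(c,1)\wconv Z(0,1)$ as $c\downarrow 0$. The remaining piece $Z_n(c_n,c)$ is handled not by comparing $\widehat{\Sigma}_n$ with $\Sigma$, but by bounding the \emph{increments} of $\widehat{\Sigma}_n$ directly---the second statement of Lemma \ref{lem:Sigmahat-LS}, which \emph{does} come with a rate---combined with L\'evy's modulus for $B$. This produces a threshold $\tau=\sqrt{2}\,\zeta(2K)$ above which $Z_n(c_n,c)$ is negligible, and the three pieces are assembled via \cite[Thm.~3.2]{Billingsley1999}. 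Your direct-comparison route would go through under a stronger local-stationarity assumption (e.g.\ H\"older regularity of $u\mapsto\tilde{G}_u$ with a quantitative modulus), but not under \eqref{ass:LS1} as formulated.
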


In particular, we may use $\widehat{q}_{\alpha,n}$ as critical value in the procedures of Sections \ref{sec:stat-signal}, \ref{sec:gof}, and \ref{sec:change}, and maintain the same statistical guarantees.

\subsection{Sparse grids for faster evaluation}

In practice, the statistic $|S_n|_{\rho_2}$ is evaluated at the grid points $\frac{i}{n}$, leading to a computational cost of $\mathcal{O}(n^2)$. 
A reduction to $\mathcal{O}(n)$ is possible if we restrict attention to a sparse subset of candidate intervals. 
To this end, let $\mathcal{G}\subset [0,1]^2$, and consider the sparse multiscale statistic
\begin{align*}
    |S_n|_{\rho,\mathcal{G}} = \sup_{(u,v)\in \mathcal{G}} \frac{|S_n(u)-S_n(v)|}{\rho(|u-v|)}.
\end{align*}
The same arguments as in Theorem \ref{thm:tail-convergence}, in particular property \eqref{eqn:cont-modulus}, show that $|\widetilde{S}_n|_{\rho_2,\mathcal{G}} \wconv_\tau |W|_{\rho_2,\mathcal{G}}$ beyond a threshold $\tau$. 
In particular, under the conditions of Corollary \ref{cor:Donsker} or Corollary \ref{cor:timeseries}, the limit process is $W$ is Gaussian, and the the threshold $\tau$ is the same as for the full grid.
We may thus either use critical values based on the limit distribution $|W|_{\rho_2,\mathcal{G}}$, or the more conservative critical values based on quantiles of $|W|_{\rho_2}\geq |W|_{\rho_2,\mathcal{G}}$ which are tabulated in Table \ref{tab:quantiles} for iid noise.

We highlight two specific choices of sparse grids.
The first option is the dyadic grid given by
\begin{align*}
    \mathcal{G}_{\text{dyadic}} = \bigcup_{m=1}^\infty \mathcal{G}_{\text{dyadic}, m}, \qquad \mathcal{G}_{\text{dyadic}, m} =\bigcup_{l=0}^{\lfloor\log_2 m\rfloor} \{  (k 2^{-l}, (k+2)\,2^{-l})\,|\, k=0,1,\ldots, 2^l-2  \}.
\end{align*}
The second option, suggested by \citet{rivera_optimal_2013}, is to consider a finer resolution for shorter intervals, specifically $\mathcal{G}_{\text{RW}} = \bigcup_{m=1}^\infty \mathcal{G}_{\text{RW}, m}$ for
\begin{align*}
    \mathcal{G}_{\text{RW}, m} &=\bigcup_{l=0}^{\lfloor\log_2 m\rfloor} \left\{  \left(\tfrac{k}{6\sqrt{l}} 2^{-l}, \tfrac{j}{6\sqrt{l}} 2^{-l} \right)\,\Big|\, k,j=0,\ldots, \lfloor 2^l \cdot 6\sqrt{l}\rfloor \text{ such that } 1\leq \tfrac{|k-j|}{6\sqrt{l}}\leq 2  \right\}.
\end{align*}
In practice, based on sample size $n$, we evaluate the statistic on the grid $\mathcal{G}_{\text{dyadic},n}$ resp.\ $\mathcal{G}_{\text{RW}, n}$. 
Since both sets have a cardinality of order $\mathcal{O}(n)$ resp.\ $\mathcal{O}(n\log n)$, this leads to a significant computational speedup.

All guarantees on the false discoveries remain valid, including Theorem \ref{thm:gof}, Proposition \ref{prop:localized}, the first claim of Theorem \ref{thm:changepoint}, and Theorem \ref{thm:changepoint-LS}.
Moreover, the procedure based on the sparse grid achieves the same asymptotic detection performance as the full grid, as $\mathcal{G}_{\text{dyadic}}$, and thus also $\mathcal{G}_{\text{RW}}\supset \mathcal{G}_{\text{dyadic}}$, satisfy condition \eqref{eqn:condition-grid} below, for $K=3$.

\begin{proposition}\label{prop:sparse}
    Suppose that $\mathcal{G}$ is such that for all $0\leq u < v \leq 1$, there exist $(u',v')\in \mathcal{G}$ such that for some $K\geq 1$,
    \begin{align}
        u'\leq \;u< v\; \leq v' \quad \text{and}\quad |u'-v'| \leq K|u-v|.\label{eqn:condition-grid}
    \end{align}
    Then the sparse test statistic $|S_n|_{\rho_2,\mathcal{G}}$ maintains detection power in the signal discovery problem, i.e.\ \eqref{eqn:power-signal} remains valid.
    Moreover, the sparse procedure isolates and localizes detectable changes in the sense of \eqref{eqn:power-change}.
\end{proposition}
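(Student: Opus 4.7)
The plan is to use condition \eqref{eqn:condition-grid} to transfer the ideal intervals from the full-grid proofs of Theorems \ref{thm:signal} and \ref{thm:changepoint} to the sparse grid: any witnessing pair $(u^*,v^*)$ used in those arguments can be enlarged to $(u',v')\in\mathcal{G}$ of at most $K$ times the length. The resulting constant-factor loss in both the numerator (signal versus noise) and the denominator $\rho_2(v'-u')$ is absorbed into the asymptotic rates. A key elementary inequality used throughout is
\[
\rho_2(Kh)=\sqrt{Kh\log(e/(Kh))}\leq \sqrt{K}\,\rho_2(h),\qquad K\geq 1,\ Kh\leq 1,
\]
which follows from $\log(e/(Kh))\leq \log(e/h)$ whenever $K\geq 1$.

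For the signal discovery part, I would set $u=a_n/n$, $v=(a_n+l_n)/n$ and apply \eqref{eqn:condition-grid} to find $(u',v')\in\mathcal{G}$ with $[u,v]\subset[u',v']$ and $v'-u'\leq K(v-u)$. Since the signal interval $I_n$ is entirely covered, $S_n(v')-S_n(u')$ receives the full signal contribution $\mu_n l_n/\sqrt{n}$, while the remaining noise piece $\widetilde{S}_n(v')-\widetilde{S}_n(u')$ is $\mathcal{O}_P(\sqrt{Kl_n/n})$ by Corollary \ref{cor:Donsker}. Dividing by $\rho_2(v'-u')\leq \sqrt{K}\,\rho_2(l_n/n)$ yields
\[
|S_n|_{\rho_2,\mathcal{G}}\;\geq\; \frac{|S_n(v')-S_n(u')|}{\rho_2(v'-u')}\;\geq\; \frac{|\mu_n|\sqrt{l_n/K}-\mathcal{O}_P(1)}{\sqrt{\log(en/l_n)}},
\]
which diverges exactly under the detection condition $\mu_n^2 l_n\gg \log(en/l_n)$. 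Combined with consistency of $\widehat{\sigma}_n$ this gives \eqref{eqn:power-signal} for the sparse statistic.

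For the changepoint part, I would adapt the proof of Theorem \ref{thm:changepoint}, which produces, for each $\tau_k\in\mathcal{D}(z)$, a witnessing pair $(u^*,v^*)$ with $\tau_k/n\in(u^*,v^*)$ and $v^*-u^*$ of order $w(\tau_k,z)/n$ such that, after the inner infimum over constants $a$, the contribution of the jump $\delta_k$ dominates the noise. Enlarging to $(u',v')\in\mathcal{G}$ via \eqref{eqn:condition-grid} and taking the candidate significant interval to be $I=(u'n,v'n)$ of length at most $Kw(\tau_k,z)$, the pair $(u',v')$ lies in $I$, so the sparse-grid localized statistic is bounded below by its value at $(u',v')$. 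The same signal-versus-noise bookkeeping as above, applied to the centered quantity $S_n(v')-S_n(u')-(v'-u')a$ at the near-optimal level, shows that this value exceeds the critical threshold $\widehat{\sigma}_n q_\alpha$ with probability tending to one. Isolation $I\cap\mathcal{D}=\{\tau_k\}$ follows from $Kw(\tau_k,z)<L_k$, which holds for $\tau_k\in\mathcal{D}(c\,z)$ with $c=c(K)>1$ by the definition of $w$. The excess factor $K$ in the localization length and the mild inflation of the detectability set are absorbed in the outer $\lim_{z\to\infty}$ of \eqref{eqn:power-change}, using the scaling $w(\tau,\alpha z)\leq \alpha w(\tau,z)$ and monotonicity of $z\mapsto \mathcal{D}(z)$.

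The main obstacle I anticipate is the careful tracking of the $K$-inflation in the changepoint part: one has to verify that the inner infimum over $a$ in the localized statistic is still approximately realized by a value close to the left or right segment mean on the enlarged sparse pair, and that the isolation and localization rates match \eqref{eqn:power-change} up to a $K$-dependent redefinition of $z$. Apart from this bookkeeping, the proof is a mechanical adaptation of the full-grid arguments, relying only on the uniform tail bounds of Corollary \ref{cor:Donsker} and the modulus-of-continuity estimate \eqref{eqn:cont-modulus}.
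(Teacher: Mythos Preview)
Your signal-discovery argument is essentially the paper's: enlarge $[a_n/n,(a_n+l_n)/n]$ to a grid pair of length at most $Kl_n/n$, keep the full signal $\mu_n l_n/\sqrt{n}$, and absorb the constant $K$ in numerator and denominator into the detection rate.

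For the changepoint half there are two gaps. First, obtaining an interval of length $\leq Kw(\tau_k,z)$ and then ``absorbing $K$ into $z\to\infty$'' does not recover \eqref{eqn:power-change} as stated: the set $\mathcal{D}(z)$ and the target length $w(\tau,z)$ are tied to the \emph{same} $z$, and your scaling $w(\tau,\alpha z)\leq \alpha w(\tau,z)$ is only an upper bound---there is no reparametrization $z\mapsto z'$ that simultaneously keeps $\mathcal{D}(z)\subset\mathcal{D}(z')$ and forces $Kw(\tau,z')\leq w(\tau,z)$. The paper sidesteps this by \emph{pre-shrinking}: it applies \eqref{eqn:condition-grid} not to $[\tau_k-w_k/2,\tau_k+w_k/2]$ but to $[\tau_k-w_k/(4K),\tau_k+w_k/(4K)]$, so that after the $K$-enlargement the grid interval $J_n(\tau_k)=[nu_n,nv_n]$ already has length $\leq w_k$ and isolates $\tau_k$, with no reparametrization needed.

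Second, you are right to flag the inner infimum over $a$ as the obstacle, but your proposed resolution---bounding the localized statistic by its value at the single enlarged pair $(u',v')$---fails: for one interval the minimizing $a$ equals the empirical mean on $(u',v')$, and the signal contribution vanishes irrespective of $\delta_k$. The full-grid proof of Theorem~\ref{thm:changepoint} defeats the infimum by using two half-intervals, one on each side of $\tau_k$, and the sparse version must do the same with two grid pairs lying inside $J_n(\tau_k)$; the paper handles this tersely by referring back to that proof, with the pre-shrinking by $1/(2K)$ providing the room for such auxiliary pairs.
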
 

Table \ref{tab:quantiles} presents the quantiles of $|B|_{\rho_2,\mathcal{G}}$ for both sparse grids, where $B$ is a standard Brownian motion. 
These quantiles serve as critical values for the multiplicatively weighted multiscale tests.
Moreover, the realized exponents for the signal discovery problem with iid Gaussian noise and sample size $n=10^4$ are presented in Table \ref{tab:realised-exponent}. 
It is found that the sparse grid sligthly improves the detection performance for short intervals, and slightly diminishes the power for longer intervals. 
However, the realized exponents are very similar in general, and hence the computational benefit does not incur a major tradeoff in statistical accuracy.

\section{Finite sample accuracy}\label{sec:simulations}

To analyze the finite sample performance of our procedure beyond the realized exponents presented in Table \ref{tab:realised-exponent}, we investigate its performance for non-Gaussian error terms.

\subsection{Size accuracy}

\begin{figure}[tb]
    \centering
    \includegraphics[width=0.49\textwidth]{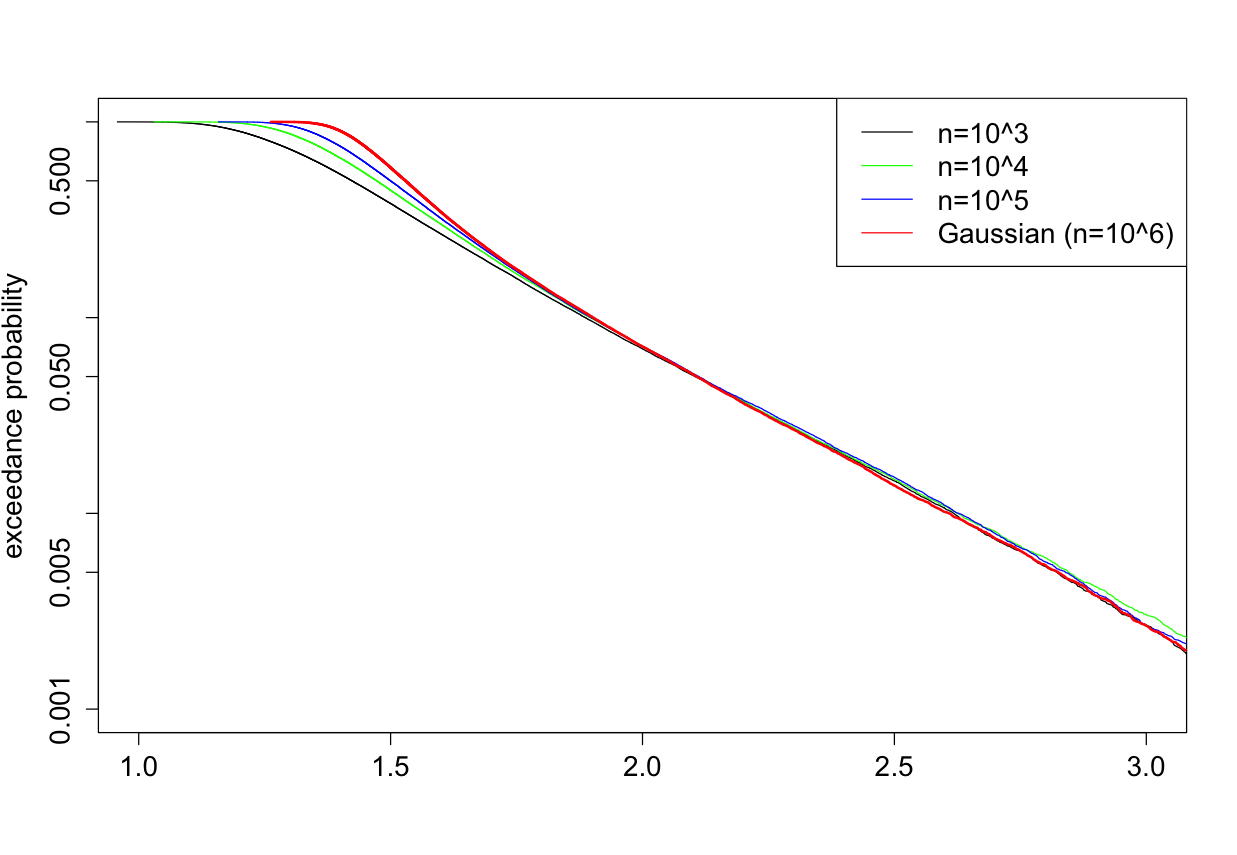}
    \includegraphics[width=0.49\textwidth]{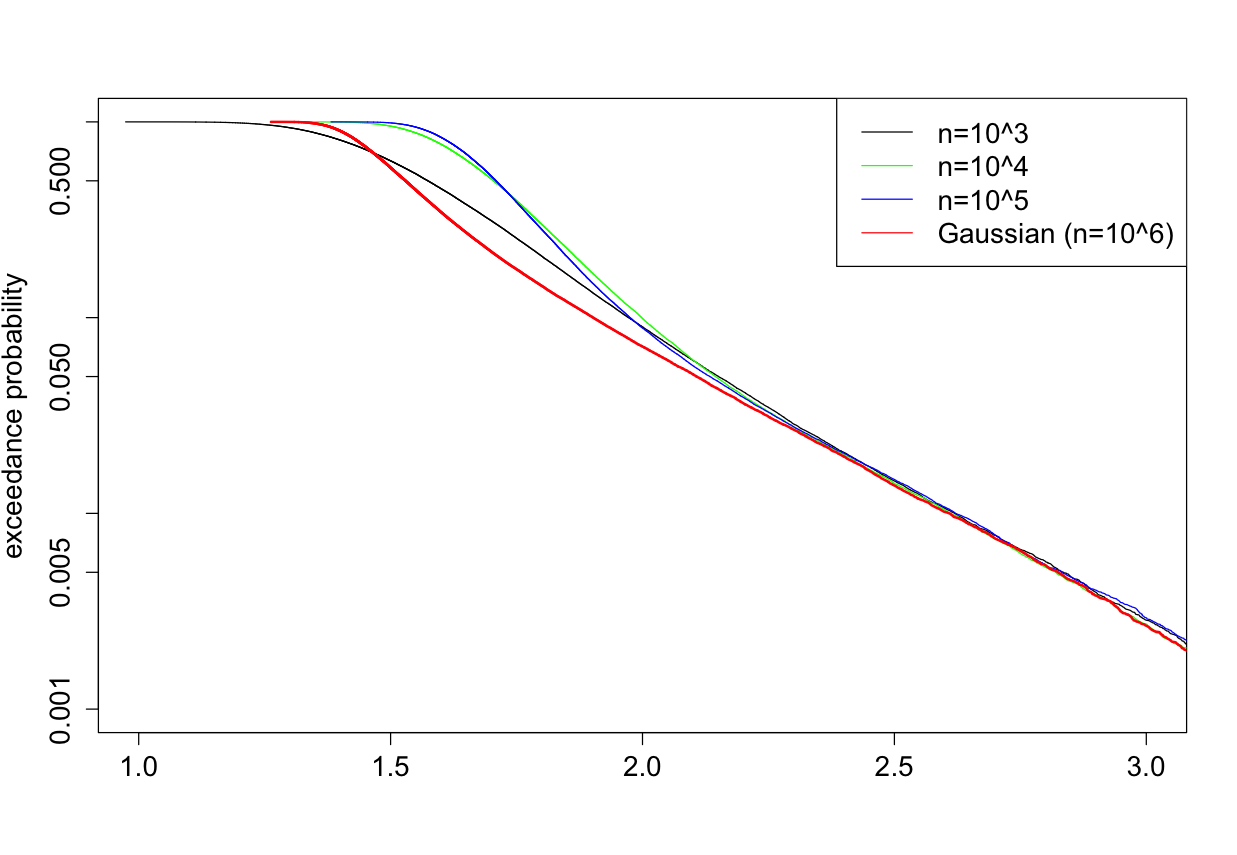}
    \caption{Survival functions of $|S_n|_{\rho_2, \mathcal{G}_{\text{dyadic}}}$ for different sample sizes with noise distribution $U(-\sqrt{3},\sqrt{3})$ (left) and $\frac{1}{2}\mathcal{N}(0,2)+\frac{1}{2} \delta_0$ (right), compared to the Gassian counterpart. The reported distributions are based on $10^5$ simulations.}
    \label{fig:ExceedancePlot}
\end{figure}

For controlling the type-I error of our procedures, the distributional approximation of $|\widetilde{S}_n|_{\rho_2}$ by $|W|_{\rho_2}$ is essential.
We study the sparse dyadic grid $\mathcal{G}_{\text{dyadic}}$ and compare the distributions for different iid unit variance innovations (a) $\eta_t^{(a)} \sim \mathcal{N}(0,1)$, (b) $\eta_t^{(b)} \sim U(-\sqrt{3},\sqrt{3})$, or (c) $\eta_t^{(c)} \sim \frac{1}{2} \mathcal{N}(0,2) + \frac{1}{2} \delta_0$ as in Proposition \ref{prop:invalid}. 
Figure \ref{fig:ExceedancePlot} depicts the survival functions $P(|\widetilde{S}_n|_{\rho_2, \mathcal{G}_{\text{dyadic}}}>x)$ for these three cases and different sample size $n$. 
We see that the distributions are almost identical in the tail, but different in the lower part of the support. 
This is in line with our theoretical results.
In the sequel, we use the quantiles of the Gaussian case (a) with sample size $n=10^6$ as substitute for the asymptotic quantiles of $|W|_{\rho_2}$.
To analyze the effect of the sample size, we present in Table \ref{tab:LS-rejection} the type I error rates for the signal discovery test, using the asymptotic critical values.
It is found that for Gaussian noise (a), the finite sample distribution and its asymptotic counterpart are very close.
For the uniform innovations (b), the approximation is accurate for significance levels below $1\%$ at small sample sizes, and below $10\%$ for samples sizes above $n=5000$.
In the case of mixture innovations (c), the approximation is accurate for small significance level $\alpha=0.1\%$, less accurate for $\alpha=1\%$, and completely inaccurate for $\alpha=10\%$, even at large sample sizes. 
This is a consequence of the heavier tails of the noise, and in line with Proposition \ref{prop:lower-tail}.

As example (d), we study an AR(1) process $\eta_t^{(d)} = 0.3\eta_{t-1}^{(d)} +\eta_t^{(b)}$. 
Due to stationarity, we may estimate the long-run-variance via $\widehat{\Sigma}_n(1)$, and choose the bandwidth $b_n=\log_{10}(n)^2$.
The results in Table \ref{tab:LS-rejection} illustrate that a larger sample size is required for accurate test sizes. 
This can be traced back to the difficulty of estimating the long-run variance.
In particular, if we plug-in the true, but statistically unfeasible, long-run-variance, the size approximation at significance levels $\alpha=1\%$ and $\alpha=0.1\%$ improves; see case (d)*.

To analyze the approximation for nonstationary and dependent error sequences, we simulate (e) the time-varying AR(1) process $\eta_{t,n}^{(e)} = a(\frac{t}{n}) \eta_{t-1,n}^{(e)} + \sigma(\frac{t}{n})\epsilon_t^{(b)}$, with $a(u) = 0.3 u$ and $\sigma(u)=1+u$.
For the estimation of the local long-run variance, we use the window size $b=\log_{10}(n)^2$ as above, and cut-off $c_n=n^{-0.33}$ for the bootstrap. It is found that for significance levels of $\alpha=1\%$ and $\alpha=0.1\%$ the approximation is getting more accurate with increasing sample size, while for $\alpha=10\%$ the approximation stays inaccurate.
This is in line with our theoretical results on thresholded weak convergence.

\begin{table}[tb]
    \centering
    \footnotesize
    \resizebox{\columnwidth}{!}{%
    \begin{tabular}{lccccc}
  \toprule
 noise model & $n=500$ & $n=1000$ & $n=5000$ & $n=10000$ & $n=50000$\\ 
  \midrule
(a) $\mathcal{N}(0,1)$ & .0846$|$.0097$|$.0010 & .0863$|$.0102$|$.0012 & .0930$|$.0099$|$.0010 & .0973$|$.0105$|$.0011 & .1000$|$.0100$|$.0010 \\ 
  (b) $U(-\sqrt{3},\sqrt{3})$ & .0680$|$.0092$|$.0010 & .0741$|$.0094$|$.0010 & .0836$|$.0098$|$.0008 & .0863$|$.0101$|$.0009 & .0918$|$.0097$|$.0008 \\ 
  (c) $\frac{1}{2}\mathcal{N}(0,2)+\frac{1}{2}\delta_0$ & .1678$|$.0187$|$.0013 & .1898$|$.0179$|$.0012 & .3964$|$.0237$|$.0011 & .4500$|$.0233$|$.0010 & .4621$|$.0184$|$.0009 \\ 
  (d) AR(1) & .1041$|$.0191$|$.0033 & .1042$|$.0172$|$.0022 & .0981$|$.0125$|$.0014 & .0978$|$.0125$|$.0013 & .0988$|$.0106$|$.0011 \\ 
    (d)* AR(1) & .0360$|$.0086$|$.0009 &.0399$|$.0093$|$.0008 & .0423$|$.0089$|$.0010 & .0452$|$.0102$|$.0010 & .0468$|$.0096$|$.0010 \\ 
    (e) tvAR(1) &.2967$|$.0260$|$.0028&.2400$|$.0204$|$.0027&.2142$|$.0140$|$.0015&.2275$|$.0142$|$.0016&.1945$|$.0115$|$.0007\\
   \bottomrule
\end{tabular}%
}
    \caption{Type-I error for the signal discovery test based on $|S_n|_{\rho_2,\mathcal{G}_{\text{dyadic}}}$ with nominal asymptotic significance levels $(10\%~|~1\%~|~0.1\%)$. 
    Probabilities are based on $10^4$ for the tvAR(1) model, and $10^5$ simulations for the other models. }
    \label{tab:LS-rejection}
\end{table}


\subsection{Detection power}

To illustrate the procedure for changepoint localization, we simulate data with tvAR(1) noise $5\cdot \eta_{t,n}^{(e)}$ as above, and mean value given by the \textsc{blocks} signal of \cite{fryzlewicz_wild_2014}. 
Sample size is $n=2048$. 
Figure \ref{fig:example-blocks} depicts the underlying signal, one realization of the noisy observations, and the corresponding intervals of significance $\NSP(\mathcal{I}_n^\tau)$.
Here, the critical values $\widehat{q}_{\alpha,n}$ are determined based on $M=10^4$ Monte Carlo simulations, with lower bound $c_n=n^{-0.45}$ and block-size $b=10$, and modulus $\rho_{2,1000}$ to put more emphasis on shorter intervals.
The same simulation is repeated $10^4$ times, and Table \ref{tab:simulation-change-tvar} reports for each changepoint how often it has been detected, i.e.\ covered by an interval of significance, and isolated, i.e.\ uniquely covered by an interval of significance.
The latter probability is always smaller, as it may occur that one significant interval contains two or more changepoints, meaning that both are detected, but none of them is isolated.
Moreover, we report the mean length of the significant interval locating the change, conditional on it being detected.  
We observe that the changes are detected with different probabilities, and that shorter or smaller changes are generally harder to detect. 
The probability of a type I error, i.e.\ that at least one significant intervals does not contain any changepoint, is $1.8\%$.
This is much lower than the nominal level $5\%$, because for some intervals where the local test statistic $T_n^*(\F_{\text{const}}^I)$ based on pure noise would falsely exceed the threshold, we actually have a changepoint. 
In other words, because there are many changepoints, it is less probable to raise a false alarm.

\begin{figure}[tb]
    \centering
    \includegraphics[width=\linewidth]{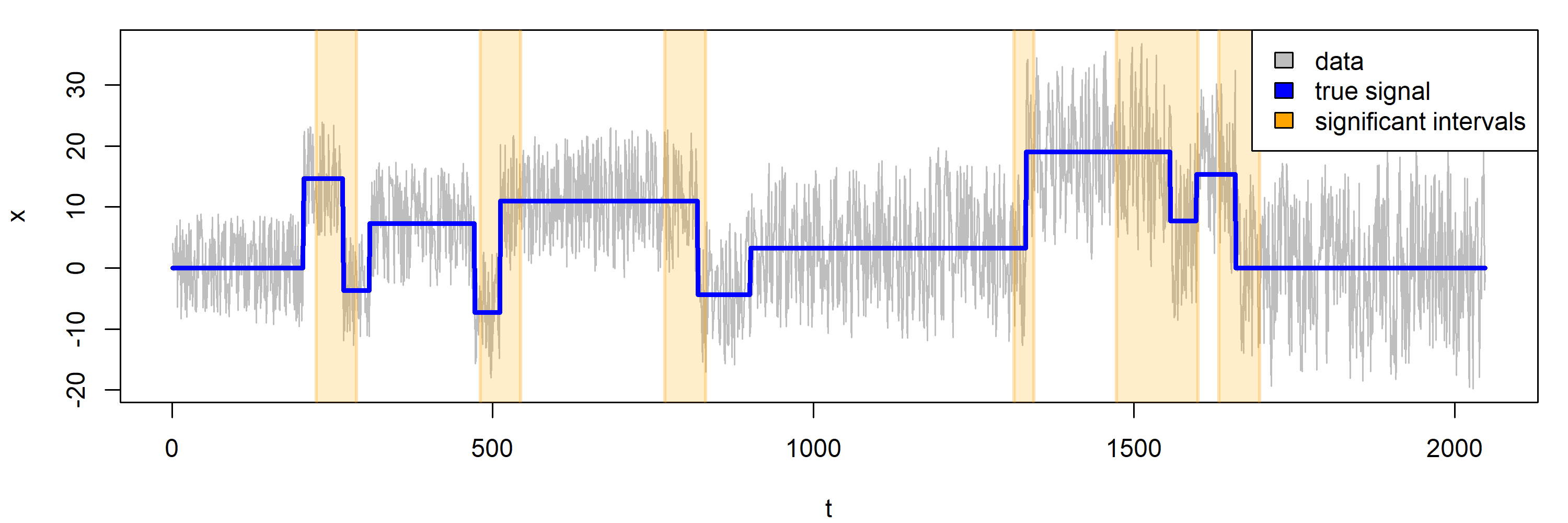}
    \caption{The \textsc{blocks} signal of \cite{fryzlewicz_wild_2014} (blue line), nonstationary noisy observations (gray), and intervals of significance for the changepoints (shaded areas).}
    \label{fig:example-blocks}
\end{figure}

\begin{table}[tb]
    \centering
    \footnotesize
    \resizebox{\columnwidth}{!}{%
    \begin{tabular}{lccccccccccc}
    \toprule
         Change location $\tau$ & 205 & 267 & 308 & 472 & 512 & 820 & 902 & 1332 & 1557 & 1598 & 1659 \\ \midrule
         Detection probability & 0.36 & 0.75 & 0.04 & 0.11 & 1.00 & 1.00 & 0.01 & 1.00 & 0.46 & 0.43 & 0.99 \\
         Isolation probability & 0.35 & 0.71 & 0.00 & 0.00 & 0.89 & 1.00 & 0.01 & 1.00 & 0.04 & 0.01 & 0.97 \\
         Mean interval length & 112.71 & 64.45 & 94.78 & 76.86 & 61.43 & 108.63 & 128.79 & 97.37 & 115.85 & 117.62 & 90.74 \\ \bottomrule
    \end{tabular}%
    }
        \caption{Detection and localization performance for the \textsc{blocks} signal with nonstationary noise sequence $5\eta_t^{(e)}$, and nominal significance level $5\%$. The actual probability of any false discovery is $1.8\%$. All reported values are based on 10000 simulations.}
    \label{tab:simulation-change-tvar}
\end{table}

The same analysis is repeated with the non-Gaussian iid error terms $5\cdot \eta_t^{(c)}$ and estimated variance. 
The results in Table \ref{tab:simulation-change-iid} show that this changepoint problem is much simpler, as evidenced by higher detection probabilities and shorter intervals of significance.
As a benchmark, we also run the original Narrowest Significance Pursuit algorithm of \cite{fryzlewicz_narrowest_2024}, with the same variance estimator. 
In this situation, based on $1000$ simulations, the type I error is found to be $100\%$.
This may be explained by the fact that \cite{fryzlewicz_narrowest_2024} assumes Gaussian errors, while our simulation has non-Gaussian errors, showing the improved robustness of the multiplicative penalty.

\begin{table}[tb]
    \centering
    \footnotesize
    \resizebox{\columnwidth}{!}{%
    \begin{tabular}{lccccccccccc}
    \toprule
         Change location $\tau$ & 205 & 267 & 308 & 472 & 512 & 820 & 902 & 1332 & 1557 & 1598 & 1659 \\ \midrule
         Detection probability & 1.00 & 1.00 & 0.79 & 0.94 & 1.00 & 1.00 & 0.92 & 1.00 & 0.99 & 0.57 & 1.00 \\
         Isolation probability & 1.00 & 1.00 & 0.79 & 0.94 & 1.00 & 1.00 & 0.92 & 1.00 & 0.54 & 0.13 & 1.00 \\
         Mean interval length & 36.20 & 24.55 & 57.02 & 35.68 & 23.04 & 34.27 & 115.85 & 32.96 & 65.03 & 77.58 & 34.48 \\ \bottomrule
    \end{tabular}%
    }
    \caption{Detection and localization performance for the \textsc{blocks} signal with noise sequence $5\eta_t^{(c)}$, and nominal significance level $5\%$. The actual probability of any false discovery is $0.8\%$. All reported values are based on 10000 simulations.}
    \label{tab:simulation-change-iid}
\end{table}

\subsection{Data example}

To demonstrate our methodology in practice, we showcase its application in monitoring power grids.
The continental European power grid operates with alternating current at a nominal frequency of 50Hz. 
Imbalances in electrical load and generation directly affect the grid frequency, as e.g.\ excessive renewable generation decreases the electrical resistance of mechanical generation plants, leading to faster rotation of the generator's rotor \citep{machowski_power_2020}.
Thus, the frequency serves as a canary for the state of the grid, and is a central input for stabilizing control mechanisms. 
Of particular interest for stability assessment of the grid is the so-called rate of change of frequency (RoCoF), i.e.\ the difference of the grid frequency form one second to the next
\citep{frigo_pmu-based_2019}. 

In Figure \ref{fig:spain-blackout}, we depict the grid frequency $f_t$ on April 28, 2025, between 10:00 and 11:00 UTC, measured by the Fraunhofer Institute for Solar Energy Systems in Freiburg, Germany, at a temporal resolution of 100ms. 
The data has been retrieved via their website \textit{energy-charts.info}.
We also depict the raw RoCoF $y_t=|f_t-f_{t-1}|$. 
At 10:33, a major power blackout on the Iberian peninsula occurred, which also affected the grid in Freiburg as can be seen from the decrease in frequency around that time. 
To determine the timing of this event statistically, we apply our changepoint localization procedure to the RoCoF time series $y_t$, with bootstrapped critical values to account for nonstationary and dependent noise. 
Based on inspection of the empirical autocorrelation function of $y_t$ (not depicted), we use the bootstrap scheme with block size $b=3\lceil \log_{10}(n)^2 \rceil =63$, and lower bound $c_n = n^{-0.33}$. 
For the multiscale statistic, we use the modulus of continuity $\rho_{2,a}$ with $a=50$.

The intervals of significance with nominal level $1\%$ are highlighted orange in Figure \ref{fig:spain-blackout}. 
A total of eight intervals are detected, as described in Table \ref{tab:spain}.
That is, with confidence $99\%$, each interval contains at least one change.
The Iberian blackout is clearly identified, and timed around 10:33:20, which is in line with preliminary investigations of the event \citep{spain2025entsoe} timing the blackout between 10:33:17 and 10:33:21. 
Moreover, we detect various further significant intervals directly preceeding the blackout, which indicate technical anomalies in the grid and might serve as early warning to prevent future blackouts.
We also highlight that no significant intervals after the blackout are detected. 
This may be explained by the fact that the Iberian grid was disconnected from the remainder of the European grid at 10:33:21. After an initial stabilization period, the frequency measurements in Freiburg are unaffected by the Iberian blackout.

\begin{table}
    \centering
    \footnotesize
    \begin{tabular}{rcl}
         \toprule  
            \multicolumn{3}{c}{Detected intervals}\\ \midrule
 10:07:30.0 & -- & 10:22:30.0 \\ 
 10:28:14.5 & -- & 10:28:28.6 \\ 
 10:30:56.2 & -- & 10:31:52.5 \\ 
 10:32:20.6 & -- & 10:32:48.7 \\ 
 10:33:19.9 & -- & 10:33:20.8 \\
 10:33:21.3 & -- & 10:33:21.7 \\ 
 10:33:22.1 & -- & 10:33:25.7 \\ 
 10:33:32.9 & -- & 10:33:33.3 \\ 
         \bottomrule
    \end{tabular}
    \caption{Intervals of significance for the changes in the RoCoF time series $y_t=|f_t-f_{t-1}|$, at  nominal significance level $1\%$.}
    \label{tab:spain}
\end{table}

\begin{figure}[tb]
    \centering
    \includegraphics[width=\linewidth]{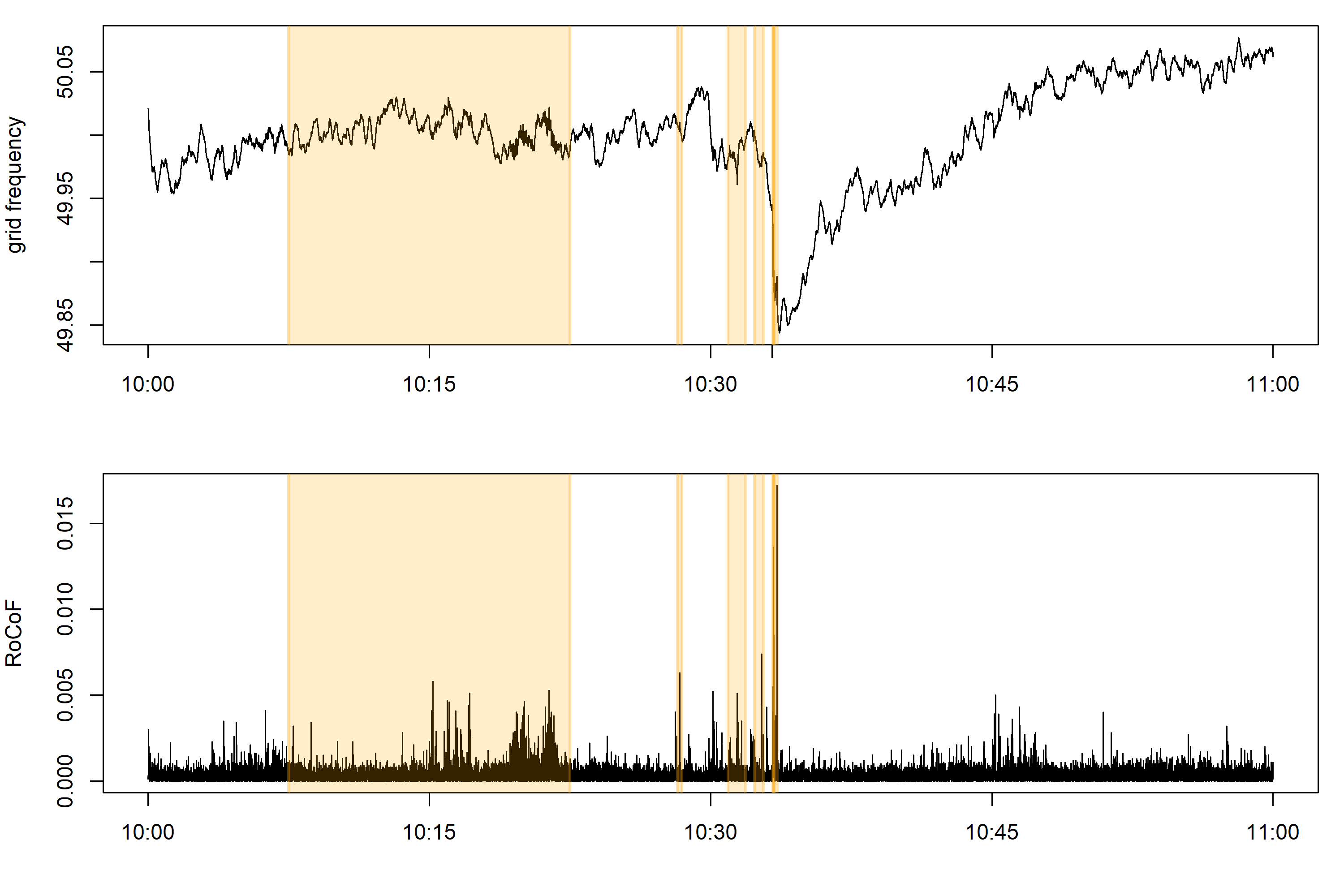}
    \caption{Top: Grid frequency $f_t$ measured in Freiburg on April 28, 2025, between 10:00 and 11:00, at 100ms resolution. Bottom: Absolute differences $y_t$ of two measurements (RoCoF). Shaded areas are significant at 95\% to contain a changepoint in RoCoF.}
    \label{fig:spain-blackout}
\end{figure}

\section*{Acknowledgements}
This work was partially supported by an IDEA league student research grant.

\appendix
\section{Proofs}

We will repeatedly make use of the following version of the Arzela-Ascoli theorem. 
For completeness, we include a proof.

\begin{theorem}[Arzela-Ascoli]
\label{Theorem:ArzelaAscoliExtension}
Let $\rho$ and $\rho'$ be moduli of continuity such that $\rho'\gg\rho$. Then the embedding $C^{\rho}\hookrightarrow C^{\rho'}_0$ is compact.
\end{theorem}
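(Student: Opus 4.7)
The plan is to verify compactness of the embedding by taking a norm-bounded sequence in $C^\rho$ and extracting a subsequence converging in $C^{\rho'}$ to a limit which lies in $C^{\rho'}_0$.

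First I would fix a sequence $(f_n)\subset C^\rho$ with $\|f_n\|_\rho\leq M$. Since $|f_n(u)-f_n(v)|\leq M\rho(|u-v|)$ and $\rho$ is continuous with $\rho(0)=0$, the family $(f_n)$ is uniformly bounded and equicontinuous on $[0,1]$. The classical Arzelà--Ascoli theorem therefore produces a subsequence, still denoted $(f_n)$, and a function $f\in C[0,1]$ with $\|f_n-f\|_\infty\to 0$. Passing to the limit $n\to\infty$ in the inequality $|f_n(u)-f_n(v)|\leq M\rho(|u-v|)$ yields $|f|_\rho\leq M$, hence $f\in C^\rho$. Since $\rho(h)/\rho'(h)\to 0$ as $h\to 0$ by assumption $\rho'\gg\rho$, one has $|f(u)-f(v)|/\rho'(|u-v|)\leq M\rho(|u-v|)/\rho'(|u-v|)\to 0$ uniformly in the pair $(u,v)$ with $|u-v|\to 0$, so automatically $f\in C^{\rho'}_0$.

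The main step is to upgrade the uniform convergence to convergence in $C^{\rho'}$. The core estimate is the standard two-regime split of the seminorm: for any $\delta\in(0,1]$,
\begin{align*}
|f_n-f|_{\rho'}
&\leq \sup_{|u-v|<\delta}\frac{|(f_n-f)(u)-(f_n-f)(v)|}{\rho'(|u-v|)}
+\sup_{|u-v|\geq\delta}\frac{|(f_n-f)(u)-(f_n-f)(v)|}{\rho'(|u-v|)}\\
&\leq |f_n-f|_\rho \cdot \sup_{0<h<\delta}\frac{\rho(h)}{\rho'(h)}
+\frac{2\|f_n-f\|_\infty}{\rho'(\delta)}\\
&\leq 2M\cdot \sup_{0<h<\delta}\frac{\rho(h)}{\rho'(h)}+\frac{2\|f_n-f\|_\infty}{\rho'(\delta)}.
\end{align*}
Given $\varepsilon>0$, choose $\delta>0$ small enough that the first term is $<\varepsilon/2$, which is possible because $\rho'\gg\rho$; then choose $n$ large so that the second term is $<\varepsilon/2$. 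This shows $|f_n-f|_{\rho'}\to 0$, and combined with $\|f_n-f\|_\infty\to 0$ yields $\|f_n-f\|_{\rho'}\to 0$.

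The hard part is precisely the first summand in the display above: the value of $\delta$ has to be chosen uniformly in $n$, which is exactly what the hypothesis $\rho'\gg\rho$ buys us through the uniform bound $|f_n-f|_\rho\leq 2M$. If $\rho'$ and $\rho$ were only comparable (i.e.\ $\rho/\rho'$ bounded away from zero near the origin), the argument would give boundedness but not convergence of the seminorm, consistent with the well-known fact that $C^\rho$ embeds continuously but not compactly into itself. The rest of the proof is a routine combination of Arzelà--Ascoli and a dominated passage to the limit, and no further structure on $\rho, \rho'$ beyond the defining properties of moduli of continuity and the separation $\rho'\gg\rho$ is needed.
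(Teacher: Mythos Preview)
Your proof is correct and follows essentially the same route as the paper's: extract a uniformly convergent subsequence via classical Arzelà--Ascoli, then upgrade to $C^{\rho'}$-convergence by splitting the seminorm at a scale $\delta$ and using $\rho/\rho'\to 0$ for the small-increment part. The only cosmetic difference is that the paper argues the subsequence is Cauchy in $C^{\rho'}_0$ and invokes completeness, whereas you identify the limit $f$ first, show $f\in C^{\rho'}_0$ directly, and then prove $\|f_n-f\|_{\rho'}\to 0$; the core estimate is the same.
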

\begin{proof}[Proof of Theorem \ref{Theorem:ArzelaAscoliExtension}.]
It suffices to show that every bounded sequence in $C^{\rho}$ admits a convergent subsequence in $C^{\rho'}_0$. 
To this end, let $(x_n)_n$ be a bounded sequence in $C^{\rho}$, i.e.\ $\|x_n\|_{\rho}\leq M$ for some $M>0$. Note that, by Arzela-Ascoli, $x_n$ admits a convergent subsequence in $C[0,1]$ with respect to the usual sup-norm, since $x_n$ is uniformly continuous, as $|x_n(u)-x_n(v)|\leq M\rho(\left|u-v\right|)<\tilde\varepsilon$ for all $u,v\in[0,1]$, $\left|u-v\right|\leq\delta$, where $\delta>0$ is chosen such that $\rho(\left|u-v\right|)<\tilde\varepsilon/M$, $|u-v|\leq\delta$ for arbitrary $\tilde\varepsilon>0$. Without loss of generality, assume that $x_n\to x$ in C[0,1] for some $x\in C[0,1]$. In particular, $x_n$ is Cauchy with respect to $\left|\cdot(0)\right|$ and the sup norm in $C[0,1]$. Since $C^{\rho'}_0$ is a Banach space, it suffices to show that $x_n$ is Cauchy with respect to the seminorm $|\cdot|_{\rho'}$. 
To this end, for any $\Delta\in(0,1)$, introduce the notation
\begin{align*}
    |x|_{\rho,\geq \Delta} &= \sup_{|u-v|\geq \Delta} |x(u)-x(v)|/\rho(|u-v|), \\
    |x|_{\rho,\leq \Delta} &= \sup_{|u-v|\leq \Delta} |x(u)-x(v)|/\rho(|u-v|).
\end{align*}
Now let $\varepsilon>0$ be arbitrary.
For any $n,m\in\N$ and $\Delta>0$ such that $\rho(\left|u-v\right|)/\rho'(\left|u-v\right|)<\varepsilon/(4M)$ for all $\left|u-v\right|\leq \Delta$, we find that
\begin{align*}
    |x_n-x_m|_{\rho'}&\leq |x_n-x_m|_{\rho', \geq \Delta}+|x_n-x_m|_{\rho', \leq \Delta} \\
    &\leq \frac{\|x_n-x_m\|_{C[0,1]}}{\rho'(\Delta)}+  \left( |x_n|_{\rho, \leq \Delta}+|x_m|_{\rho, \leq \Delta} \right) \sup_{|u-v|\leq \Delta}\frac{\rho(|u-v|)}{\rho'(|u-v|)} \\
    &\leq \frac{\|x_n-x_m\|_{C[0,1]}}{\rho'(\Delta)} +\frac{\varepsilon}{4M} \left( |x_n|_{\rho} + |x_m|_{\rho}   \right) \\
    &=\frac{\|x_n-x_m\|_{C[0,1]}}{\rho'(\Delta)}+\frac{\varepsilon}{2}
\end{align*}
Thus, for $n,m\geq N$, where $N\in\N$ is such that $\|x_n-x_m\|_{C[0,1]}\leq\varepsilon\rho'(\Delta)/2$, we obtain $|x_n-x_m|_{\rho'}<\varepsilon$, i.e. $x_n$ is Cauchy with respect to $|\cdot|_{\rho'}$ and thus also with respect to $\|\cdot\|_{\rho'}$. Hence, $C^{\rho}\hookrightarrow C^{\rho'}_0$ is a compact embedding.
\end{proof}

\subsection*{Proof of Proposition \ref{prop:powerless}}
    Let $I_n \supseteq [a_n,b_n]$ such that $|I_n|\asymp h_n \geq l_n$. 
    Then $\E(T_n(I_n)) = \mu_n l_n/\sqrt{|I_n|}$ and $\Var(T_n(I_n))=\sigma^2$. 
    If $\mu_n^2 l_n \frac{l_n}{h_n} \gg \log (\frac{n}{h_n})$, then $T_n(I_n)-\sigma\sqrt{2\log ( \frac{n}{|I_n|})}\to \infty$ in probability, and thus the test is consistent.

    On the other hand, if $\mu_n^2 l_n (\frac{l_n}{h_n})= \mathcal{O}(1)$, then $\sup_{|I|\geq h_n}|\E(T_n(I))| = \mathcal{O}(1)$, and thus
    \begin{align*}
        T_{n,h_n}^{\text{DS}} \;\leq\; \sup_{|I|\geq h_n}|\E(T_n(I))| + \sup_{|I|\geq h_n} \left\{ T_n(I)-\E(T_n(I)) - \sigma\sqrt{2 \log(n/|I|)} \right\}_+ \;=\;\mathcal{O}_P(1),
    \end{align*}
    as the second term is stochastically bounded \citep{dumbgen_multiscale_2001}.

\subsection*{Proof of Theorem \ref{thm:tail-convergence}}

We make use of the following lemma, which is adapted from \cite[Lemma 10.4]{schilling_brownian_2021}.
\begin{lemma}
\label{Lemma:SchillVarianteWeibull}
Let $\rho_\alpha(h)=\sqrt{h}\log^{1/\alpha}(1/h)$. Then for all $\kappa\in\R$ and $h<1/2$
\begin{align*}
    \rho_\alpha(2^\kappa h)\leq\left[\left|\kappa\right|+1\right]^{\frac{1}{\alpha}}\sqrt{2^\kappa}\rho_\alpha(h),
\end{align*}
i.e., weights of the form $\rho_\alpha$ satisfy condition \eqref{eqn:rho-fac} in Theorem \ref{thm:tail-convergence}.
\end{lemma}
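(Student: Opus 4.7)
The approach is elementary: directly compute $\rho_\alpha(2^\kappa h)$ and reduce the claim to a bound on the logarithmic factor via the identity $\log(1/(2^\kappa h)) = \log(1/h) - \kappa\log 2$. Since the prefactor $\sqrt{2^\kappa h}$ factors cleanly as $\sqrt{2^\kappa}\sqrt{h}$, the $\sqrt{2^\kappa}$ on the right-hand side is absorbed exactly, so the full estimate reduces to showing
\[
\log^{1/\alpha}(1/(2^\kappa h)) \leq (|\kappa|+1)^{1/\alpha}\log^{1/\alpha}(1/h),
\]
or, by monotonicity of $x\mapsto x^{1/\alpha}$ on $[0,\infty)$, the equivalent additive bound $\log(1/(2^\kappa h)) \leq (|\kappa|+1)\log(1/h)$.

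I would then split into two cases according to the sign of $\kappa$. For $\kappa\geq 0$ (implicitly assuming $2^\kappa h\leq 1$ so the left-hand side is nonnegative), the identity gives $\log(1/(2^\kappa h)) = \log(1/h) - \kappa\log 2 \leq \log(1/h)$, which is trivially dominated by $(|\kappa|+1)\log(1/h)$. For $\kappa<0$, the identity reads $\log(1/(2^\kappa h)) = \log(1/h) + |\kappa|\log 2$, and here one invokes the only use of the hypothesis $h<1/2$: since $\log(1/h)>\log 2$, the additive term $|\kappa|\log 2$ is bounded by $|\kappa|\log(1/h)$, yielding $\log(1/(2^\kappa h)) \leq (1+|\kappa|)\log(1/h)$. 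Combining the two cases and raising to the power $1/\alpha$ gives the desired logarithmic bound; multiplying by $\sqrt{2^\kappa h}$ completes the proof.

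The proof is essentially bookkeeping, with no real obstacle; the hypothesis $h<1/2$ is used exactly once, in the $\kappa<0$ case, to convert the additive correction $|\kappa|\log 2$ into a multiplicative factor. To subsequently deduce condition \eqref{eqn:rho-fac} for $\rho_0=\rho_\alpha$, one sets $z=2^\kappa$ with $\kappa\leq 0$; the apparent mismatch between the bound $(|\kappa|+1)^{1/\alpha}\sqrt{z}$ and the clean form $\widetilde{K}z^p$ is harmless, since $(|\kappa|+1)^{1/\alpha} = (\log_2(1/z)+1)^{1/\alpha}$ grows only polylogarithmically in $1/z$ and is therefore dominated by $\widetilde{K}z^{p-1/2}$ for any $p<1/2$ and some finite $\widetilde{K}$, exactly as required.
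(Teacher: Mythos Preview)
Your proof is correct and essentially the same as the paper's: the paper raises both sides to the power $\alpha$ and applies the triangle inequality $|\log(2^\kappa h)|\leq |\kappa|\log 2 + |\log h|$ in one line rather than splitting into cases, but the decisive step---bounding $|\kappa|\log 2$ by $|\kappa|\log(1/h)$ via $h<1/2$---is identical. Your additional remark on how to extract condition \eqref{eqn:rho-fac} (absorbing the polylogarithmic factor into $z^{p-1/2}$ for some $p<1/2$) is not spelled out in the paper but is the intended reading.
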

\begin{proof}[Proof of Lemma \ref{Lemma:SchillVarianteWeibull}]
        We have for $h>0$ such that $\log(2)<\left|\log(h)\right|$, i.e. $h<1/2$,
    \begin{align*}
        \frac{\rho_\alpha^\alpha(2^\kappa h)}{\rho_\alpha^\alpha(h)}=\frac{(2^\kappa)^\frac{
    \alpha}{2}\left|\log(2^\kappa h)\right|}{\left|\log(h)\right|}
    \leq\frac{(2^\kappa)^\frac{
    \alpha}{2}\left(\left|\log(2^\kappa)\right|+\left|\log(h)\right|\right)}{\left|\log(h)\right|}\leq (2^\kappa)^\frac{
    \alpha}{2} (\left|\kappa\right|+1),
    \end{align*}
    and thus $\rho_\alpha(2^\kappa h)\leq\sqrt{2^\kappa}(\left|\kappa\right|+1)^{\frac{1}{\alpha}}\rho_\alpha(h)$.
\end{proof}

\begin{theorem}\label{thm:metric}
    Let $W_n$ be a sequence of stochastic processes taking values in a metric space $(\mathcal{M},d)$, and with paths in $C[0,1]$, and $\rho_0:[0,1]\to[0,\infty)$ an increasing function with $\rho_0(0)=0$.
    Suppose there exist $C>0$ and for any $t>C$ a $\kappa(t)>1$ and $K(t)>0$, such that
    \begin{align}
        P\left(\frac{d(W_n(u),W_n(v))}{\rho_0(|u-v|)} >t\right) \leq K(t) |u-v|^{\kappa(t)},\qquad u,v\in[0,1]. \label{eqn:tail-tight-metric}\tag{T-M}
    \end{align}
    Assume further that there exist $p>0$ and $\widetilde{K}>0$ such that 
    \begin{align}
        \rho_0(zh)\leq \widetilde{K} z^p \rho_0(h), \qquad h,z\in[0,1].\label{eqn:rho-fac-2}\tag{R}
    \end{align}
    Then, for any $t>C$,
    \begin{align}
        \sup_n P\left( \sup_{u,v\in [0,1], |u-v|\leq h} \frac{d(W_n(u),W_n(v))}{\rho_0(|u-v|)} > t \right) \longrightarrow 0, \quad \text{as } h\to 0, \label{eqn:cont-modulus-metric}
    \end{align}
\end{theorem}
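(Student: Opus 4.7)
The strategy is a dyadic chaining argument in the spirit of Kolmogorov--Chentsov, adapted to a metric-space codomain. The essential ingredients are (i) the tail bound \eqref{eqn:tail-tight-metric}, which via a union bound controls the maximal dyadic increment at each scale, and (ii) the regularity \eqref{eqn:rho-fac-2} of $\rho_0$, which permits geometric summation of the modulus across scales.

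For $j \geq 0$, I would introduce the dyadic grids $D_j = \{k 2^{-j} : 0 \leq k \leq 2^j\}$, $D = \bigcup_{j} D_j$, and the scale-$j$ maximal dyadic increment $M_j = \max_{0 \leq k < 2^j} d(W_n(k 2^{-j}), W_n((k+1) 2^{-j}))$. Applying \eqref{eqn:tail-tight-metric} to each of the $2^j$ dyadic pairs at scale $j$ and taking a union bound gives, for any $s > C$,
\[
P\bigl(M_j > s\,\rho_0(2^{-j})\bigr) \;\leq\; 2^j\, K(s)\, 2^{-j\kappa(s)} \;=\; K(s)\, 2^{-j(\kappa(s)-1)}.
\]
Since $\kappa(s) > 1$, the right-hand side is geometrically summable in $j$, so the ``good event'' $G_{j_0}(s) = \{M_j \leq s\,\rho_0(2^{-j}) \text{ for all } j \geq j_0\}$ satisfies $P(G_{j_0}(s)^c) \to 0$ as $j_0 \to \infty$, uniformly in $n$.

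The next step would translate the scalewise bounds on $G_{j_0}(s)$ into a uniform bound on the Hölder-type modulus. Standard dyadic chaining via the triangle inequality yields, for any $u, v \in D$ with $2^{-j_*-1} < |u-v| \leq 2^{-j_*}$ and $j_* \geq j_0$,
\[
d(W_n(u), W_n(v)) \;\leq\; C_0 \sum_{j \geq j_*} M_j \;\leq\; C_0\, s \sum_{j \geq j_*} \rho_0(2^{-j}),
\]
for an absolute constant $C_0$. Invoking \eqref{eqn:rho-fac-2} with $z = 2^{-(j-j_*)}$ bounds $\sum_{j \geq j_*} \rho_0(2^{-j}) \leq \widetilde{K}\,\rho_0(2^{-j_*})/(1-2^{-p})$, while the reverse comparison $\rho_0(|u-v|) \geq \rho_0(2^{-j_*-1}) \geq \rho_0(2^{-j_*})/(\widetilde{K}\, 2^p)$ (also from \eqref{eqn:rho-fac-2}) produces
\[
\frac{d(W_n(u), W_n(v))}{\rho_0(|u-v|)} \;\leq\; L\,s,
\]
with an explicit $L = L(\widetilde{K}, p)$. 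Path-continuity of $W_n$ then extends this dyadic bound to the supremum over all $u, v \in [0,1]$.

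Setting $s = t/L$ and $h = 2^{-j_0}$, the two steps combine to
\[
P\Bigl( \sup_{|u-v| \leq h} \frac{d(W_n(u), W_n(v))}{\rho_0(|u-v|)} > t \Bigr) \;\leq\; P\bigl(G_{j_0}(t/L)^c\bigr) \;\xrightarrow{h \to 0}\; 0,
\]
uniformly in $n$, whenever $t/L > C$. The hard part will be the chaining constant $L$: the crude summation above forces $L > 1$, so in this form the conclusion is obtained not for all $t > C$ but only for $t$ above the enlarged threshold $LC$. Since $C$ in the hypothesis is simply any level above which \eqref{eqn:tail-tight-metric} is postulated, this can be absorbed by passing to the effective threshold $C' = LC$ in the conclusion, preserving the form of the theorem; a finer chaining with scale-dependent test levels $s_j \downarrow C$ would be required to recover the sharp threshold (matching Lévy's constant $\sqrt{2}$ for Brownian motion when $\rho_0 = \rho_2$), but this refinement is not needed for the downstream weak-convergence applications in Section \ref{sec:weaktail}.
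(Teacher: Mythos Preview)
Your chaining skeleton is right and matches the paper's strategy, but the concession at the end is a genuine gap, not a cosmetic one. The theorem asserts \eqref{eqn:cont-modulus-metric} for every $t>C$ with the \emph{same} $C$ as in hypothesis \eqref{eqn:tail-tight-metric}; establishing it only for $t>LC$ with a chaining constant $L>1$ is a strictly weaker statement. You cannot ``absorb'' this: the equality of the two thresholds is precisely the content, and it propagates directly into the threshold of $|W_n|_\rho\Rightarrow_C|W|_\rho$ in Theorem~\ref{thm:tail-convergence}, and from there into the condition $q_\alpha>C_\eta$ of Theorem~\ref{thm:signal} that determines which significance levels are asymptotically valid. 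Inflating by $L$ would force $q_\alpha>LC_\eta$ and shrink the usable range of $\alpha$, undermining the feasibility claim that motivates the paper. (There is also a smaller slip: your ``reverse comparison'' $\rho_0(2^{-j_*-1})\geq \rho_0(2^{-j_*})/(\widetilde K\,2^p)$ would require $z=2$ in \eqref{eqn:rho-fac-2}, but that condition is only postulated for $z\in[0,1]$.)

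The paper removes the factor $L$ by a $\delta$-trick. For fixed $t>C$ and small $\delta\in\bigl(0,\tfrac{\kappa(t)-1}{\kappa(t)+1}\bigr)$, the good event $A_m$ at scale $m$ controls not only unit dyadic steps but all increments of length $l\cdot 2^{-m}$ with $1\le l\le 2^{\lfloor m\delta\rfloor}$, still at level $t$; the union bound then costs $\sim 2^{m(1+\delta)}$ terms against a tail $\le K(t)\,2^{-m(1-\delta)\kappa(t)}$, which remains summable in $m$. For a pair $(u,v)$ one selects $m$ via $2^{-(m+1)(1-\delta)}<|u-v|\le 2^{-m(1-\delta)}$ and places $u_m,v_m\in T_m$ inside $[u,v]$; since $|u_m-v_m|\le |u-v|$ spans at most $2^{\lfloor m\delta\rfloor}$ grid steps, on $A_m$ the main increment already satisfies $d(W_n(u_m),W_n(v_m))\le t\,\rho_0(|u-v|)$ with \emph{no} multiplicative loss. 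The residual telescoping from $u_m$ to $u$ and $v_m$ to $v$ runs over scales $k>m$, and because $2^{-(m+1)}=2^{-(m+1)\delta}\cdot 2^{-(m+1)(1-\delta)}<2^{-(m+1)\delta}|u-v|$, property \eqref{eqn:rho-fac-2} gives $\sum_{k>m}\rho_0(2^{-k})\le \epsilon_m\,\rho_0(|u-v|)$ with $\epsilon_m\asymp 2^{-p\,m\delta}\to 0$. The resulting bound $d(W_n(u),W_n(v))\le t(1+2\epsilon_m)\rho_0(|u-v|)$ then delivers the sharp threshold after letting $h\to 0$ and noting that $t>C$ was arbitrary.
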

\begin{proof}[Proof of Theorem \ref{thm:metric}]
    We establish \eqref{eqn:cont-modulus-metric} analogously to the proof of Levy's continuity theorem \cite[Thm.~10.6]{schilling_brownian_2021}.
    Let $t>C$ be arbitrary and fix some small $\delta>0$ and define the event $A_m$ and the probability $R_m$ as 
    \begin{align}
        A_m &= \left\{ \max_{l=1,\ldots, 2^{\lfloor m\delta\rfloor}} \max_{j=0,\ldots, 2^{m}-l} \frac{d(W_n(\tfrac{l+j}{2^m}),W_n(\tfrac{j}{2^m}))}{\rho_0(\tfrac{l}{2^{m}})} \,\leq\,t \right\},\\
        R_m =P(A_m^c)&=  P\left(\max_{l=1,\ldots, 2^{\lfloor m\delta\rfloor}} \max_{j=0,\ldots, 2^{m}-l} \frac{d(W_n(\tfrac{l+j}{2^m}),W_n(\tfrac{j}{2^m}))}{\rho_0(\tfrac{l}{2^{m}})} \,>\,t\right) \\
        & \leq \sum_{l=1}^{2^{\lfloor m\delta\rfloor}} \sum_{j=0}^{2^m-l}P\left(d(W_n(\tfrac{l+j}{2^m}),W_n(\tfrac{j}{2^m}))\,>\, t\rho_0(\tfrac{l}{2^{m}}) \right) \\
        &\leq  2^m \sum_{l=1}^{2^{\lfloor m\delta\rfloor}} \left(\frac{l}{2^m}\right)^{\kappa(t)} \\
        &\leq 2^{m}2^{\lfloor m\delta\rfloor} \left(\frac{2^{\lfloor m\delta\rfloor}}{2^m}\right)^{\kappa(t)} \approx 2^{m(1+\delta)-m(1-\delta)\kappa(t)},
    \end{align}
    which is summable for any $\kappa(t)>1$, choosing $0<\delta<[\kappa(t)-1]\cdot[1+\kappa(t)]^{-1}$.\\
    Now we apply a chaining argument as follows: Denote $T_k=\{ j 2^{-k}\,|\, j=0,\ldots, 2^k \}$. 
    For any $0\leq u \leq v\leq 1$ with $|u-v|\leq h$, choose $m\in\N$ such that $2^{- (m+1)(1-\delta)}<h \leq 2^{-m(1-\delta)}$. 
    We can then find $u_m, v_m \in T_m$ such that $u\leq u_m \leq v_m \leq v$, and sequences $u_k,v_k\in T_k$, $k\geq m$, such that 
    \begin{align*}
        u_k\downarrow u, \qquad |u_k-u_{k+1}| \leq 2^{-k-1}, \\
        v_k\uparrow v,\qquad |v_k-v_{k+1}| \leq 2^{-k-1}.
    \end{align*}
    This construction yields, on the event $\underline{A}_m = \bigcap_{k=m}^\infty A_k$ with probability $P(\underline{A}_m)\to 1$,
    \begin{align*}
        &\quad \sup_{|u-v|\leq h} d(W_n(u),W_n(v)) \\
        &\leq \sup_{|u-v|\leq h} d(W_n(u_m),W_n(v_m)) + \sum_{k=m}^\infty \left[d(W_n(u_k),W_n(u_{k+1})) + d(W_n(v_k),W_n(v_{k+1}))\right] \\
        &\leq t\rho_0(u_m-v_m) + 2t \sum_{k=m+1}^\infty \rho_0(2^{-k}) \\
        &= t\rho_0(u_m-v_m) + 2t \sum_{k=0}^\infty \rho_0(2^{-k}2^{-(m+1)(1-\delta)} 2^{(m+1)\delta}) \\
        &\leq t\rho_0(u_m-v_m) + 2t \sum_{k=0}^\infty \rho_0(2^{-k} h 2^{(m+1)\delta}) \\
        & \leq t\rho_0(h) + 2t \sum_{k=m+1}^\infty \rho_0(2^{-k(1-\delta)} h ) \\
        & \overset{*}{\leq} t\rho_0(h) + 2t  \sum_{k=m+1}^\infty  \tilde{K}2^{-k(1-\delta)p}\rho_0(h) \\
        & \leq t \rho_0(h) \left[ 1+2\epsilon_m \right],
    \end{align*}
    for some sequence $\epsilon_m\to 0$.
    At the step $(*)$, we use \eqref{eqn:rho-fac-2}. Now let $M\in \N$ such that $\Delta \leq 2^{- M(1-\delta)}$.
    Since $\underline{A}_M\subset \underline{A}_m$ for all $m\geq M$, we may conclude that in the event $\underline{A}_M$,
    \begin{align*}
        \sup_{|u-v|\leq h} d(W_n(u),W_n(v)) \leq t \rho_0(h)[1+\epsilon_M], \qquad \forall h\leq \Delta.
    \end{align*}
    Hence, for any $\epsilon>0$ and $\Delta$ small enough such that $2\epsilon_{M}=2\epsilon_{M(\Delta)}<\epsilon$, 
    \begin{align*}
        P\left(\sup_{|u-v|\leq \Delta} \frac{d(W_n(u),W_n(v))}{\rho_0(|u-v|)} \,>\,t(1+\epsilon)\right) 
        &\leq P(\underline{A}_M^c),
    \end{align*}
    which tends to zero as $M\to \infty$, i.e.\ as $\Delta\to 0$.
    As $\epsilon>0$ and $t>C$ are arbitrary, we actually obtain for any $t>C$ that
    \begin{align*}
        Q(t,\Delta) := \sup_n P\left(\sup_{|u-v|\leq \Delta} \frac{d(W_n(u),W_n(v))}{\rho_0(|u-v|)} \,>\,t\right) \to 0, \qquad \text{as}\quad \Delta\to 0.
    \end{align*}
    This establishes \eqref{eqn:cont-modulus}.
\end{proof}

Now proceed to the proof of Theorem \ref{thm:tail-convergence}.
First, observe that \eqref{eqn:cont-modulus} is a consequence of \eqref{eqn:cont-modulus-metric} for the metric space $(\R, |\cdot|)$.

    From \eqref{eqn:cont-modulus}, we may conclude that $\|W_n\|_{\rho_0}$ is stochastically bounded as follows. 
    For any $N\in \N$ and any $t>6$,
    \begin{align*}
        P\left( \|W_n\|_{\rho_0}>t \right) 
        &\leq Q(\tfrac{t}{3}, \tfrac{1}{N}) + \sum_{i,j=1}^N P\left( |W_n(\tfrac{i}{N})-W_n(\tfrac{j}{N})| > \tfrac{t}{3} \rho_0(\tfrac{1}{N}) \right) \\
        &\leq Q(2, \tfrac{1}{N}) + N^2 \max_{i}P\left( |W_n(\tfrac{i}{N})| > \tfrac{t}{6} \rho_0(\tfrac{1}{N}) \right).
    \end{align*}
    The second term tends to zero as $t\to\infty$, uniformly in $n$, because the finite dimensional distributions of $W_n$ converge.
    For any $\epsilon>0$, we may choose $N=N(\epsilon)$ such that the first term is smaller than $\frac{\epsilon}{2}$, and then $t=t(N(\epsilon),\epsilon)=t(\epsilon)$ big enough such that the second term is smaller than $\frac{\epsilon}{2}$. 
    Thus, $\|W_n\|_{\rho_0}$ is stochastically bounded.
    By virtue of Theorem \ref{Theorem:ArzelaAscoliExtension}, this boundedness also implies tightness in $C^\rho_0$ for any $\rho\ll \rho_0$, and thus establishes the first claim of \eqref{eqn:wconv}.

    Regarding the second claim of \eqref{eqn:wconv}, i.e.\ the thresholed weak convergence, observe that for any $\Delta>0$ and $\rho$ such that $\rho(h)/\rho_0(h)\to 1$ as $h\to 0$,
    \begin{align*}
        P(\Delta,W_n) &\leq P\left(\|W_n\|_{\rho}>t\right) \leq P(\Delta, W_n) + R(\Delta,W_n), \\
        \text{where}\quad 
        P(\Delta,W_n) &= P\left(\sup_{|u-v|\geq \Delta} \frac{|W_n(u)-W_n(v)|}{\rho(|u-v|)} \,>\,t\right), \\
        R(\Delta,W_n) &= P\left(\sup_{|u-v|\leq \Delta} \frac{|W_n(u)-W_n(v)|}{\rho(|u-v|)} \,>\,t\right)
    \end{align*}
    By virtue of the weak convergence $W_n\wconv W$ in $C[0,1]$ established above, we conclude that $P(\Delta, W_n)\to P(\Delta, W)$ for any $\Delta>0$ as $n\to \infty$. 
    Moreover, \eqref{eqn:cont-modulus} yields $\lim_{\Delta\to 0} \sup_n R(\Delta, S_n) = 0$, also for $\rho$, since for arbitrary $\tilde\varepsilon>0$ and small enough $\tilde\Delta>0$ it holds that
    \begin{equation*}
        R(\tilde\Delta,W_n)\leq P\left(\sup_{|u-v|\leq\tilde\Delta}\frac{|W_n(u)-W_n(v)|}{\rho_0(|u-v|)}>t(1-\tilde\varepsilon)\right).
    \end{equation*}
    Thus the claim holds for all $t>C/(1-\tilde\varepsilon)$, and since $\tilde\varepsilon>0$ can be chosen arbritrary, the second claim of \eqref{eqn:wconv} follows for all $t>C$ as desired. This completes the proof of Theorem \ref{thm:tail-convergence}.

\subsection*{Proof of Theorem \ref{thm:Skorokhod-tail}}
    The boundedness in $C^{\rho^*}$ implies tightness in $C[0,1]$ (Theorem \ref{Theorem:ArzelaAscoliExtension}), and thus $W_n\wconv W$ in $C[0,1]$. 
    By Skorokhod's representation theorem, upon potentially changing the probability space, $\| W_n - W\|_{C[0,1]}\pconv 0$.
    Next, for any $\Delta>0$ and any stochastic process $X$, denote the quantities
    \begin{align*}
        \|X\|_{\rho^*, \geq \Delta} &= \sup_{|u-v|\geq \Delta} \frac{|X(u)-X(v)|}{\rho^*(|u-v|)}, \\
        \|X\|_{\rho^*, \leq \Delta} &= \sup_{|u-v|\leq \Delta} \frac{|X(u)-X(v)|}{\rho^*(|u-v|)},
    \end{align*}
   and  $\|X\|_{\rho, \geq \Delta}$ and  $\|X\|_{\rho, \leq \Delta}$ are to be understood in the same way. 
   Then
    \begin{align*}
        \|W_n - W\|_{\rho} &\leq \|W_n-W\|_{\rho, \geq \Delta} + \|W_n-W\|_{\rho, \leq \Delta}.
    \end{align*}
    For any fixed $\Delta>0$, the first term tends to zero in probability as $n\to \infty$ because $\|W_n-W\|_{C[0,1]}\pconv 0$. For the right hand term, note that
    \begin{align*}
        \|W_n-W\|_{\rho,\leq \Delta} &\leq \|W_n\|_{\rho,\leq\Delta}+ \|W\|_{\rho,\leq\Delta} \\
        &= \left(\sup_{h\leq \Delta} \frac{\rho^*(h)}{\rho(h)}\right)\left( \|W_n\|_{\rho^*, \leq \Delta} + \|W\|_{\rho^*,\Delta}\right)
        &\leq C(\Delta) \left( \|W_n\|_{\rho^*} + \|W\|_{\rho^*} \right),
    \end{align*}
    where $C(\Delta)\to 0$, as $\Delta\to 0$, since $\rho\gg\rho^*$. 
    Since $\|W_n\|_{\rho^*}+\|W\|_{\rho^*}$ is stochastically bounded, for any $\varepsilon>0$ and $\delta>0$, there are $N\in\N$ and $\tilde\Delta>0$ such that
    \begin{equation*}
        P(\|W_n-W\|_{\rho,\leq \Delta}>\varepsilon)<\delta,\,n\geq N,\,0<\Delta\leq \tilde\Delta.
    \end{equation*}
    Thus, $\|W_n - W\|_{\rho}\pconv 0$ follows for every $\rho\gg\rho^*$.

\subsection*{Proof of Proposition \ref{prop:lower-tail}}
    For arbritrary $T>0$, let $\sigma>0$ and $p\in (0,1)$ be such that $1/(\sigma\sqrt{p})>T$. Let $\epsilon_t$ be iid symmetric random variables with $P(|\epsilon_t|>r) = \exp(-r^2)$, and $\sigma^2=\Var(\epsilon_t)$. 
    For any $p\in(0,1)$, let $\xi_t\iid \text{bin}(1,p)$ and define $\eta_t = \epsilon_t \xi_t/\sqrt{\sigma^2 p}$, such that $\Var(\eta_t)=1$.
    Define the interpolated partial sum process 
    \begin{align*}
        S_n(u) = \frac{1}{\sqrt{n}} \sum_{t=1}^{\lfloor nu\rfloor}\eta_t + \sqrt{n}\left[u-\tfrac{\lfloor un\rfloor}{n}\right]\eta_{\lceil un\rceil}.
    \end{align*}
    Then $\|S_n\|_{\rho_2} \geq \max_{t=1,\ldots, n} |\eta_t|/\sqrt{1+\log n}$.
    Hence,
    \begin{align*}
        P\left( \|S_n\|_{\rho_\alpha} \leq c \right) 
        &\leq P\left(|\eta_1|  \leq c (1+\log n)^{\frac{1}{2}} \right)^n \\
        &=\left(1-p + p \exp\left(-\left( c \sigma \sqrt{p} \right)^2 (1+\log n)\right) \right)^n \\
        &\leq\left(1 - p n^{-\left( \frac{c}{\sigma \sqrt{p}} \right)^2} \right)^n.
    \end{align*}
    For $c<1/\sqrt{\sigma^2 p}$, the latter term tends to zero as $n\to \infty$. 
    This shows that $P(\|S_n\|_{\rho_2}\geq \frac{1}{\sigma \sqrt{p}}) \to 1$, proving Proposition \ref{prop:lower-tail} by the specific choice of $\sigma$ and $p$.
.

\subsection*{Proof of Theorem \ref{thm:concentration-dependent}}
    This proof is inspired by \cite[Thm.~1]{Liu2013}.
    Let $S_n = \sum_{t=1}^n w_t\eta_t$, and 
    \begin{align*}
        \eta_{t,j} &= \E(\eta_t|\epsilon_t, \epsilon_{t-1},\ldots, \epsilon_{t-j}), \\
        S_{n,j} &= \sum_{t=1}^n w_t \eta_{t,j},\\
        Y_{i,j} &= \sum_{t=(i-1)j+1}^{(ij)\wedge n} w_t( \eta_{t,j}-\eta_{t,j-1}),\qquad i=1,\ldots, \lfloor n/j\rfloor +1.
    \end{align*}
    By telescoping, we find that 
    \begin{align}
        \|S_{n}\|_{\psi_2} &\leq \sum_{j=1}^\infty \| S_{n,j}-S_{n,j-1}\|_{\psi_2} \\
        &\leq \sum_{j=1}^\infty \left\| \sum_{i~\text{is odd}} Y_{i,j} \right\|_{\psi_2} + \sum_{j=1}^\infty\left\| \sum_{i~\text{is even}} Y_{i,j} \right\|_{\psi_2}. \label{eqn:concentration1}
    \end{align}
    Observe that the $Y_{1,j}, Y_{3,j},\ldots$ are independent by construction, and the same holds for the even indices. 
    Hence, using that $\|\eta_{t,j}-\eta_{t,j-1}\|_{\psi_2} \leq \delta(j)$,
    \begin{align}
        \left\| \sum_{i~\text{is odd}} Y_{i,j} \right\|_{\psi_2} 
        &\leq \tilde K \sqrt{\sum_i \|Y_{i,j}\|_{\psi_2}^2} \\
        &\leq \tilde K \sqrt{\sum_i \left(\sum_{t=(i-1)j+1}^{(ij)\wedge n} |w_t| \delta_{\psi_2}(j)\right)^2 } \quad \\
        &\leq \tilde K \sqrt{\sum_i \left(\sum_{t=(i-1)j+1}^{(ij)\wedge n} |w_t|^2 \right) (j\delta_{\psi_2}(j)^2) } \\
        &= \tilde K \sqrt{j} \delta_{\psi_2}(j) \sqrt{\sum_{t=1}^n |w_t|^2}.\label{eqn:concentration2}
    \end{align}    
    for some universal constant $\tilde K>0$, stemming from the sub-Gaussian concentration bound. The same bound holds for the even indices $i$. 
    Combining \eqref{eqn:concentration1} and \eqref{eqn:concentration2} yields the result for $K=2\tilde K$.

\subsection*{Proof of Corollaries \ref{cor:Donsker} and \ref{cor:timeseries}}

\begin{proof}[Proof of Corollary \ref{cor:Donsker}]
    We just need to verify the tightness condition \eqref{eqn:tail-tight}.
    For arbitrary $0\leq u<v\leq 1$, we may write $S_n(u)-S_n(v) =\frac{1}{\sqrt{n}} \sum_{t=1}^{n} \eta_t w_n(t,u,v)$ for weights $w_n(t,u,v)\in[0,1]$ such that $\sum_{t=1}^n w_n(t,u,v)^2\leq\sum_t w_n(t,u,v) = |v-u|$. 
    Then \eqref{eqn:tail-tight} is a consequence of Hoeffding's inequality.
\end{proof}

\begin{proof}[Proof of Corollary \ref{cor:timeseries}]
    As in Corollary \ref{cor:Donsker}, we obtain \eqref{eqn:tail-tight} as a consequence of Theorem \ref{thm:concentration-dependent}. The convergence of the finite dimensional marginals may be obtained, for example, via Theorem 5 of \cite{mies_strong_2024}.
\end{proof}

\subsection*{Proof of Theorem \ref{thm:signal}}
    Under the null, $\widehat{\sigma}_n^2$ is consistent as an average of a $1$-dependent sequence. Corollary \ref{cor:Donsker} yields $(T_n^*\vee \tau) \wconv (\sigma |B|_{\rho_2}\vee \tau)$ for any $\tau>\sigma C_\eta$. 
    By Slutsky's Lemma and the consistency of $\widehat{\sigma}_n^2$, we find $(\frac{T_n^*}{\widehat{\sigma}_n} \vee \tau) \wconv (|B|_{\rho_2}\vee \tau)$ for any $\tau>C_\eta$, which yields the first claim.

    Under the alternative, $\widehat{\sigma}_n^2$ is still consistent because
    \begin{align*}
        \widehat{\sigma}_n^2 = \frac{1}{2(n-1)}\sum_{t=2}^n (\eta_t-\eta_{t-1}+f(t)-f(t-1))^2 = \frac{1}{2n}\sum_{t=2}^n (\eta_t-\eta_{t-1})^2 + \mathcal{O}_P(\mu_n^2/n).
    \end{align*}
    Moreover, $T_n^* \geq T_n(I_n)/\sqrt{\log \frac{en}{|I_n|}}\to \infty$ in probability. 
    The latter convergence holds because $\Var(\sum_{t\in I_n} Y_t / \sqrt{|I_n|}) =\sigma$ whereas $\E(\sum_{t\in I_n} Y_t / \sqrt{|I_n|}) = \sqrt{|I_n|} \mu_n \to \infty$, which implies $T_n(I_n) \asymp \sqrt{|I_n|}\mu_n \gg \sqrt{\log \frac{en}{|I_n|}}$ by assumption.

\subsection*{Proof of Proposition \ref{prop:DS-A}}
    For $A> C_\eta$, Corollary \ref{cor:Donsker} and \eqref{eqn:cont-modulus} yield for any $t>0$
    \begin{align*}
        \lim_{h\downarrow 0} \sup_{n} P\left( \sup_{|I|\leq h} \{ T_n(I)-A\sqrt{\log \tfrac{en}{|I|}} \}_+ > t \right) = 0.
    \end{align*}
    Moreover, 
    \begin{align*}
        \sup_{|I|>h} \left\{T_n(I) - A \sqrt{\log \tfrac{en}{|I|}}\right\}_+ &\;\underset{n\to\infty}{\wconv}\; \sup_{\substack{u\leq v \\ |u-v|>h}}\left\{ \frac{|B(v)-B(u)|}{\sqrt{v-u}} - A \sqrt{\log \tfrac{e}{|u-v|}} \right\}_+ \\
        &\;\underset{h\to 0}{\wconv}\; \sup_{u\leq v }\left\{ \frac{|B(v)-B(u)|}{\sqrt{v-u}} - A \sqrt{\log \tfrac{e}{|u-v|}} \right\}_+ .
    \end{align*}
    Standard arguments yield the weak convergence of the full statistic \citep[Thm.~3.2]{Billingsley1999}.

    The second claim is a direct consequence of Proposition \ref{prop:lower-tail}.

\subsection*{Proof of Theorem \ref{thm:gof}}
    Since $T_n^*(\Fhyp_0)\leq T_n^*(f_n)$ under $\mathbb{H}_0$, the proof is identical to Theorem \ref{thm:signal} if we can show that $\widehat{\sigma}^2_n \to \sigma^2$ in probability.
    To this end, we observe that
    \begin{align*}
        &\left| \widehat{\sigma}_n^2 - \frac{1}{2(n-1)}\sum_{t=2}^n (\eta_t-\eta_{t-1})^2 \right| \\
        &\leq \frac{1}{2(n-1)} \sum_{t=2}^n 2|\eta_t-\eta_{t-1}|\cdot |f_n(t)-f_n(t-1)| + \frac{1}{2(n-1)} \sum_{t=2}^n |f_n(t)-f_n(t-1)|^2 \\
        &\leq \sqrt{\frac{1}{(n-1)} \sum_{t=2}^n |\eta_t-\eta_{t-1}|^2} \sqrt{\frac{1}{(n-1)} \sum_{t=2}^n |f_n(t)-f_n(t-1)|^2} + o(1)\\
        &= \mathcal{O}_P(1) \cdot o(1) + o(1),
    \end{align*}
    where the first term is bounded by the law of large numbers. 
    This establishes the consistency of $\widehat{\sigma}_n^2$.

\subsection*{Proof of Proposition \ref{prop:localized}}
    We observe that
    \begin{align*}
        P\left( \exists I\in \mathcal{I}_n\text{ such that } f_n\in \Fhyp_0^I \right)
        &\leq P\left( \bigcup_{I:f_n\in \Fhyp_0^I} \{ T_n^*(\Fhyp_0^I) > \widehat{\sigma}_n \cdot q_\alpha \} \right) \\
        &\leq P\left( \sup_{I:f_n\in \Fhyp_0^I} T_n^*(\Fhyp_0^I) > \widehat{\sigma}_n \cdot q_\alpha \right) \\
        &\leq P\left(  T_n^*(\{f_n\}) > \widehat{\sigma}_n \cdot q_\alpha \right),
    \end{align*}
    which is asymptotically less than $\alpha$.

\subsection*{Proof of Theorem \ref{thm:changepoint}}
    The first claim is a consequence of Proposition \ref{prop:localized}.
    For the power statement, let $k$ such that $\tau_k\in \mathcal{D}(z)$ and set $I_n(\tau_k) = [\tau_k-\frac{w_k}{2}, \tau_k+\frac{w_k}{2}]$ for $w_k=w(\tau_k,z)$ such that $I_n(\tau_k) \cap \{\tau_1,\ldots, \tau_\kappa\} = \{\tau_k\}$.
    Then
    \begin{align*}
        T_n^*\left(\Fhyp_{\text{const}}^{I_n(\tau_k)}\right)
        &= \inf_{a\in\R} \sup_{[un,vn]\subset I_n(\tau_k)} \frac{\left|S_n(v)-S_n(u) - [v-u]a\right|}{\rho_2(v-u)}.
    \end{align*}
    Denote $\mu_k = \sum_{j=0}^{k-1} \delta_j = f_n(\tau_k-1)$, and assume without loss of generality that $\delta_k>0$.
    For any $a\in\R$, we have $a/\sqrt{n}\leq \mu_k + \frac{\delta_k}{2}$ or $a/\sqrt{n}\geq \mu_k + \frac{\delta_k}{2}$. 
    Assume for now the latter case. 
    Then 
    \begin{align*}
        &\sup_{[un,vn]\subset I_n(\tau_k)} \frac{\left|S_n(v)-S_n(u) - [v-u]a\right|}{\rho_2(v-u)} \\
        &\geq \frac{\left|S_n((\tau_k+\frac{w_k}{2})/n)-S_n(\tau_k/n) - \frac{w_k a}{2n}\right|}{\rho_2(\frac{w_k}{2n})} \\
        &= \frac{\left| \frac{w_k}{2\sqrt{n}} (\mu_k+\delta_k)+\tilde{S}_n((\tau_k+\frac{w_k}{2})/n)-\widetilde{S}_n(\tau_k/n) - \frac{w_k a}{2n}\right|}{\rho_2(\frac{w_k}{2n})}\\
        &\geq \frac{\frac{w_k \delta_k}{4\sqrt{n}}}{\rho_2(\frac{w_k}{2n})}  - |\widetilde{S}_n|_{\rho_2}\\
        &\geq c\frac{\sqrt{w_k} \delta_k}{\sqrt{\log \frac{en}{2w_k} }} - |\widetilde{S}_n|_{\rho_2}
        \quad \geq \quad c\sqrt{z} - |\widetilde{S}_n|_{\rho_2},
    \end{align*}
    for some small $c>0$.
    If instead $a/\sqrt{n} \leq \mu_k + \frac{\delta_k}{2}$, we observe that
    \begin{align*}
        &\sup_{[un,vn]\subset I_n(\tau_k)} \frac{\left|S_n(v)-S_n(u) - [v-u]a\right|}{\rho_2(v-u)} \\
        &\geq \frac{\left|S_n(\tau_k/n)- S_n((\tau_k-\frac{w_k}{2})/n) - \frac{w_k a}{2n}\right|}{\rho_2(\frac{w_k}{2n})} \\
        &\geq \frac{\left|\widetilde{S}_n(\tau_k/n)- \widetilde{S}_n((\tau_k-\frac{w_k}{2})/n) + \frac{w_k \mu_k}{2\sqrt{n}} - \frac{w_k a}{2n}\right|}{\rho_2(\frac{w_k}{2n})} \\
        &\geq \frac{\frac{w_k \delta_k}{4\sqrt{n}}}{\rho_2(\frac{w_k}{2n})}  - |\widetilde{S}_n|_{\rho_2}
        \quad \geq \quad c\sqrt{z} - |\widetilde{S}_n|_{\rho_2}.
    \end{align*}
    By virtue of Theorem \ref{thm:tail-convergence} and Corollary \ref{cor:timeseries}, the random variable $|\widetilde{S}_n|$ is stochastically bounded, and thus $T^*(\Fhyp_{\text{const}}^{I_n(\tau_k)})\to \infty$ as $z\to\infty$, showing that $I_n(\tau_k)\in \mathcal{I}_n$ eventually. 
    Note furthermore that $I_n(\tau_k)\cap \{ \tau_1,\ldots, \tau_\kappa\} = \{\tau_k\}$.
    Thus,
    \begin{align*}
        &\quad P\left( \forall \tilde{\tau}\in \mathcal{D}(z)\; \exists I\in \mathcal{I}_n^\tau\text{ such that } I \cap \{\tau_1,\ldots, \tau_\kappa\} = \{\tilde{\tau}\} \right) \\
        &\geq P\left( \forall \tau_k\in D(z)\,\,:\, T^*(\Fhyp_{\text{const}}^{I_n(\tau_k)}) \geq \widehat{\sigma}_n\cdot q_\alpha \right)\\
        &\geq P\left( c \cdot \sqrt{z} \;>\; \widehat{\sigma}_n\cdot q_\alpha + |\widetilde{S}_n|_{\rho_2} \right),
    \end{align*}
    which tends to one as $n\to \infty$ and $z\to \infty$.

\subsection*{Proof of Proposition \ref{prop:NSP}}
    Because any $I\in\mathcal{I}$ contains at least one point $\tau\in\widetilde{\mathcal{D}}$, we may decompose 
    \begin{align*}
        \bigcup_{\tau\in \widetilde{\mathcal{D}}} \widetilde{\mathcal{I}}(\tau) \; \subseteq\, \mathcal{I} \, \subseteq\; \bigcup_{\tau\in\widetilde{\mathcal{D}}} \mathcal{I}(\tau),
    \end{align*}
    for
    \begin{align*}
        \mathcal{I}(\tau) = \{ I\in \mathcal{I}\,|\, \tau \in I\}, \qquad \widetilde{\mathcal{I}}(\tau) = \left\{ I\in \mathcal{I}(\tau)\text{ such that } |I|\leq r(\tau)\right\} \neq \emptyset.
    \end{align*}
    Choosing one interval $\widehat{I}_1$ of shortest length, we find that $\widehat{I}_1 \in \widetilde{\mathcal{I}}(\tau_{(1)})$ for some $\tau_{(1)}\in \widetilde{\mathcal{D}}$. 
    Since $\tau_{(1)} \in \widehat{I}_1$, the class $\mathcal{I}(\tau_{(1)})$ is removed from the candidate set, while all other $\widetilde{\mathcal{I}}(\tau_{(k)})$ are still relevant as they do not contain $\tau_{(1)}$.
    Thus, at the next step, we have \[\bigcup_{\tau\in \widetilde{\mathcal{D}}\setminus\{\tau_{(1)}\}} \widetilde{\mathcal{I}}(\tau)\;\subseteq\,\mathcal{I}^{1} \,\subseteq\;  \bigcup_{\tau\in\widetilde{\mathcal{D}}\setminus\{\tau_{(1)}\}} \mathcal{I}(\tau) .\]
    Proceeding inductively, we find that $\widehat{I}_k\in \widetilde{\mathcal{I}}(\tau_{(k)})$ for some $\tau_{(k)}\in \widetilde{\mathcal{D}}$.
    This implies that the iteration stops after exactly $|\widetilde{\mathcal{I}}|$ steps, with the claimed guarantees.

\subsection*{Proof of Theorem \ref{thm:changepoint-LS}}

\begin{lemma}\label{lem:Sigmahat-LS}
    Under the conditions of Theorem \ref{thm:changepoint-LS}, 
    \begin{align*}
        \sup_{u\in[0,1]} \left| \widehat{\Sigma}_n(u) - \Sigma(u) \right| \pconv 0,
    \end{align*}
    and for some constant $K$, and all $c\in(0,1)$,
    \begin{align*}
        \sup_{|u-v|>c} \left| \widehat{\Sigma}_n(u) - \widehat{\Sigma}_n(v) \right|  
        \leq Kc + \mathcal{O}_P\left( \sqrt{\frac{v_n b_n}{n}} + b_n^{-1} \right).
    \end{align*}
\end{lemma}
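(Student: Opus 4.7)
The plan is to decompose $\widehat{\Sigma}_n$ into an oracle estimator that uses only the noise and a signal-driven remainder, then to control each summand separately; the modulus-of-continuity bound will follow from this uniform approximation plus Lipschitz continuity of the limit $\Sigma$. Specifically, writing $Y_t = f_n(t) + \eta_t$ and $D_t := \frac{1}{\sqrt{b_n}}\sum_{i=0}^{b_n-1}(Y_{t-i}-Y_{t+1+i}) = D_t^f + D_t^\eta$, expansion of the square yields $\widehat{\Sigma}_n(u) = \tilde{\Sigma}_n(u) + A_n(u) + C_n(u)$, with $\tilde{\Sigma}_n(u) = \frac{1}{2n}\sum_t (D_t^\eta)^2$ the oracle estimator, $A_n(u) = \frac{1}{2n}\sum_t (D_t^f)^2 \geq 0$ a deterministic remainder, and $C_n(u) = \frac{1}{n}\sum_t D_t^f D_t^\eta$ a mean-zero cross term.

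For the oracle, I will first derive the pointwise expansion $\E[(D_t^\eta)^2] = 2\sigma_\infty^2(t/n) + O(b_n^{-1})$, which follows from \eqref{ass:LS1}, \eqref{ass:LS2} together with the summability of $\sum_l |l|\,|\gamma_n(l;u)|$ guaranteed by \eqref{ass:LS3} with $\beta>2$ (both the tail past the window and the linear weight correction of the triangular autocovariance kernel are $O(b_n^{-1})$). Summing gives $\E[\tilde{\Sigma}_n(u)] = \Sigma(u) + O(b_n^{-1})$ uniformly in $u$. Applying Theorem \ref{thm:concentration-dependent} to the centered quadratic form $\sum_t [(D_t^\eta)^2 - \E(D_t^\eta)^2]$ yields $\Var(\tilde{\Sigma}_n(u)) = O(b_n/n)$, and uniform control $\sup_u|\tilde{\Sigma}_n(u) - \Sigma(u)| = O_P(b_n^{-1} + \sqrt{b_n/n})$ follows by combining pointwise bounds on a dyadic grid of size $O(n/b_n)$ with the monotonicity of $u\mapsto \tilde{\Sigma}_n(u)$.

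For the signal-driven remainder, the representation $D_t^f = -\frac{1}{\sqrt{b_n}}\sum_j (b_n - |j-t|)_+\, d_j$ with $d_j := f_n(j+1)-f_n(j)$ makes $A_n$ a deterministic quadratic form in $d$: direct computation (expanding the square and interchanging summation, noting that only pairs with $|j-k|<2b_n$ contribute) gives $\sum_t (D_t^f)^2 \leq C\, v_n b_n^2$, hence $\sup_u|A_n(u)| = O(v_n b_n^2/n)$. For the cross term, rewriting $\sum_t D_t^f D_t^\eta = \sum_j a_j \eta_j$ for explicit deterministic weights $a_j$ supported in windows of size $O(b_n)$ around the support of $d_j$ and satisfying $\sum_j a_j^2 \leq C\, v_n b_n^3$, Theorem \ref{thm:concentration-dependent} combined with a sub-Gaussian maximal inequality over a dyadic grid in $u$ yields $\sup_u|C_n(u)| = O_P(\sqrt{v_n b_n^3\log n}/n)$. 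Under the bandwidth constraints built into Theorem \ref{thm:changepoint-LS} (in particular $b_n\ll n$ and the constraint on $v_n$ implicit in $\sqrt{v_n b_n/n}\ll c_n \ll 1$) both contributions are absorbed into $\sqrt{v_n b_n/n} + b_n^{-1}$.

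Combining the three bounds establishes the first claim $\sup_u|\widehat{\Sigma}_n(u)-\Sigma(u)|\pconv 0$, and more precisely $\sup_u|\widehat{\Sigma}_n(u) - \Sigma(u)| = O_P(b_n^{-1} + \sqrt{v_n b_n/n})$. Since \eqref{ass:LS1}--\eqref{ass:LS3} ensure that $\sigma_\infty^2$ is bounded (set $K := \sup_v \sigma_\infty^2(v)$), $\Sigma$ is $K$-Lipschitz and the second claim follows from the triangle inequality $|\widehat{\Sigma}_n(u) - \widehat{\Sigma}_n(v)| \leq K|u-v| + 2\sup_w|\widehat{\Sigma}_n(w)-\Sigma(w)|$. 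The most delicate step will be controlling the cross term $C_n$: a naive Cauchy--Schwarz bound of the form $|C_n|\leq \sqrt{\sum_t(D_t^f)^2 \cdot \sum_t(D_t^\eta)^2}/n$ ignores the mean-zero structure and yields only $O_P(\sqrt{v_n b_n^2/n})$, which is too weak; the improvement requires treating $\sum_t D_t^f D_t^\eta$ as a linear functional of the noise and invoking the novel sub-Gaussian concentration inequality for dependent data (Theorem \ref{thm:concentration-dependent}) to extract an Orlicz-norm bound that exploits the signal's spatial concentration near the jumps of $f_n$.
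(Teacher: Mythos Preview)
Your overall decomposition matches the paper's: both split $D_t = D_t^\eta + D_t^f$, control the oracle $\tilde{\Sigma}_n$ separately, and handle the second claim via the Lipschitz property of the limit (the paper uses boundedness of $\sigma_{\infty,n}^2(t)$, you use Lipschitzness of $\Sigma$; these are equivalent). Two points of divergence are worth noting, one of which is a genuine gap.

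For the cross term you dismiss as ``naive'', the paper in fact takes exactly the Cauchy--Schwarz route: it bounds $|\widehat{\Sigma}_n(u)-\tilde{\Sigma}_n(u)| \leq \Delta_n + \sqrt{\tilde{\Sigma}_n(1)\,\Delta_n}$ and uses $\tilde{\Sigma}_n(1)=\mathcal{O}_P(1)$. Your more elaborate treatment via Theorem~\ref{thm:concentration-dependent} is unnecessary for the stated rate. On the related computation, your claim $\sum_t(D_t^f)^2 \leq C\,v_n b_n^2$ is too optimistic in general: the triangular weights $(b_n-|j-t|)_+$ are of order $b_n$, and taking $d_j$ constant on a block of length $b_n$ shows the bound can be as large as $C\,v_n b_n^3$. (The paper claims $\Delta_n\leq 2b_n v_n/n$, which appears to rely on a similar miscount in the opposite direction.)

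The genuine gap is your treatment of the oracle term. Theorem~\ref{thm:concentration-dependent} bounds sub-Gaussian norms of \emph{linear} functionals $\sum_t w_t\eta_t$; it does not apply to the quadratic form $\sum_t[(D_t^\eta)^2-\E(D_t^\eta)^2]$, whose summands are only sub-exponential. Your dyadic-grid argument for uniformity in $u$ then lacks the Orlicz-norm input it needs. The paper proceeds differently: it decomposes $\tilde{\Sigma}_n = \tfrac12\tilde{\Sigma}_n^- + \tfrac12\tilde{\Sigma}_n^+ + \tilde{\Sigma}_n^\pm$ into two one-sided block averages and a cross piece, invokes an external $L_2$ maximal inequality for quadratic functionals of Bernoulli shifts (Theorem~5.1 of \cite{mies_sequential_2023}) to obtain $\sup_u|\tilde{\Sigma}_n^\pm(u) - \tfrac1n\sum_t\sigma_{\infty,n}^2(t)| = \mathcal{O}_P(\sqrt{b_n/n}+b_n^{-1})$, and shows $\tilde{\Sigma}_n^\pm$ is negligible by bounding its expectation through the autocovariance decay from~\eqref{ass:LS3}. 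That external result, not Theorem~\ref{thm:concentration-dependent}, is what delivers the uniform rate for the oracle.
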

\begin{proof}[Proof of Lemma \ref{lem:Sigmahat-LS}]
    It is sufficient to consider $u=\frac{i}{n}$ and $v=\frac{j}{n}$, as both claims of the Lemma readily extend via interpolation.
    
    Define the terms
    \begin{align*}
        \widetilde{\Sigma}_n(u) &= \frac{1}{2n}\sum_{t=b_n}^{\lfloor un\rfloor -b_n}  \left( \frac{1}{\sqrt{b_n}}\sum_{i=0}^{b_n-1}  (\eta_{t-i} - \eta_{t+1+i}) \right)^2, \\
        \Delta_n &= \frac{1}{2n}\sum_{t=b_n}^{n -b_n} \left( \frac{1}{\sqrt{b_n}}\sum_{i=0}^{b_n-1}  (f_n(t-i) - f_n(t+1+i)) \right)^2,
    \end{align*}
    for $u=\frac{i}{n}$, and interpolated in between.
    By expanding the square and applying the Cauchy-Schwarz inequality, we find that
    \begin{align}
        \left|\widehat{\Sigma}_n(u) - \widetilde{\Sigma}_n(u)\right|
        &\leq \Delta_n + \sqrt{\widetilde{\Sigma}_n(1) \Delta_n}.\label{eqn:intvar-err}
    \end{align}
    We will show that $\Delta_n\to 0$ and $\widetilde{\Sigma}_n(u)\to \Sigma(u)$ uniformly, which implies that \eqref{eqn:intvar-err} tends to zero.
    The term $\Delta_n$ may be bounded as
    \begin{align*}
        \Delta_n 
        &\leq \frac{1}{2n}\sum_{t=b_n}^{\lfloor un\rfloor -b_n} \left( \frac{1}{\sqrt{b_n}}\sum_{j=-b_n}^{b_n-1}  |f_n(t+j+1) - f_n(t+j)| \right)^2 \\
        &= \frac{1}{2n}\sum_{t=b_n}^{\lfloor un\rfloor -b_n} 4 b_n \left( \frac{1}{2b_n}\sum_{j=-b_n}^{b_n-1}  |f_n(t+j+1) - f_n(t+j)| \right)^2 \\
        &\leq \frac{1}{2n}\sum_{t=b_n}^{\lfloor un\rfloor -b_n} 2 \sum_{j=-b_n}^{b_n-1}  |f_n(t+j+1) - f_n(t+j)|^2\\
        &\leq \frac{2b_n}{n} \sum_{t=2}^n |f_n(t)-f_n(t-1)|^2 \leq \frac{2b_n}{n} v_n,
    \end{align*}
    which tends to zero by assumption.

    To handle $\widetilde{\Sigma}_n(u)$, we decompose
    \begin{align*}
        \widetilde{\Sigma}_n(u) 
        &= \frac{1}{2n}\sum_{t=b_n}^{\lfloor un\rfloor -b_n}  \left( \frac{1}{\sqrt{b_n}}\sum_{i=0}^{b_n-1} \eta_{t-i}  \right)^2 
        + \frac{1}{2n}\sum_{t=b_n}^{\lfloor un\rfloor -b_n}  \left( \frac{1}{\sqrt{b_n}}\sum_{i=0}^{b_n-1} \eta_{t+i}  \right)^2 \\
        &\quad + \frac{1}{n}\sum_{t=b_n}^{\lfloor un\rfloor -b_n}  \left( \frac{1}{\sqrt{b_n}}\sum_{i=0}^{b_n-1} \eta_{t-i}  \right)\left( \frac{1}{\sqrt{b_n}}\sum_{i=0}^{b_n-1} \eta_{t+i}  \right) \\
        &= \tfrac{1}{2}\widetilde{\Sigma}^-_n(u) + \tfrac{1}{2}\widetilde{\Sigma}^+_n(u) + \widetilde{\Sigma}^\pm_n(u).
    \end{align*}
    Theorem 5.1 of \cite{mies_sequential_2023} shows that in the regime $1\ll b_n\ll n$, and $\beta\geq 3$, 
    \begin{align*}
        \sup_{u\in[0,1]}\left|\widetilde{\Sigma}_n^-(u) - \frac{1}{n}\sum_{t=1}^{\lfloor un\rfloor -b_n} \sigma^2_{\infty,n}(t) \right| = \mathcal{O}_P\left( \sqrt{\tfrac{b_n}{n}} + \tfrac{1}{b_n} \right),\\ 
        \text{where} \quad \sigma^2_{\infty,n}(t) = \sum_{h=-\infty}^\infty \Cov(G_{t,n}(\beps_0), G_{t,n}(\beps_h)).
    \end{align*}
    Assumption \eqref{ass:LS3} implies that $\sigma^2_{\infty,n}(t)$ is bounded, see \cite[Prop.~1]{mies_strong_2024}, and thus $\frac{1}{n}\sum_{t=1}^{\lfloor un\rfloor -b_n} \sigma^2_{\infty,n}(t) = \int_0^u \sigma_{\infty,n}^2(\lfloor vn\rfloor)\, dv + \mathcal{O}(b_n/n)$. 
    As shown in the proof of \cite[Lemma 4]{mies_strong_2024}, Assumptions \eqref{ass:LS1} and \eqref{ass:LS3} imply the convergence $\int_0^u \sigma_{\infty,n}^2(\lfloor vn\rfloor)\, dv \to \int_0^u \sigma_{\infty}^2(v)\, dv$, and thus $\widehat{\Sigma}_n^-(u)\pconv \Sigma(u)$. 
    Upon an index shift, the same arguments apply to $\widetilde{\Sigma}_n^+(u)$.

    To show that $\widetilde{\Sigma}_n^\pm(u)\to 0$, we write $\widetilde{\Sigma}_n^\pm(u)=\frac{1}{n}\sum_{t=b_n}^{n -b_n} \chi_{t+b_n,n}$, with addends given by $\chi_{t+b_n,n}=\frac{1}{b_n} \sum_{i,j=0}^{b_n-1} \eta_{t-i}\eta_{t+j}$. 
    Since $\eta_t=G_{t,n}(\beps_t)$, we may also write $\chi_{t,n} = H_{t,n}(\beps_t)$ for some kernel $H_{t,n}$. 
    Proceeding as in the proof of \cite[Thm.~5.1]{mies_sequential_2023}, in particular equations (30) and (31), we can derive that 
    \begin{align*}
        \|H_{t,n}(\beps_t) - H_{t,n}(\beps_{t,h})\|_{L_2} \leq K \frac{[(h-b_n)\vee 1]^{1-\beta}}{\sqrt{b_n}}
    \end{align*}
    for some constant $K$, and
    \begin{align*}
        \left\|\max_{k=1,\ldots, n} \sum_{t=1}^k (\chi_{t,n} - \E \chi_{t,n}) \right\|_{L_2} = \mathcal{O}\left(\sqrt{n b_n}\right).
    \end{align*}
    Hence, $\sup_{u} |\widetilde{\Sigma}^\pm_n(u)-\E\widetilde{\Sigma}^\pm_n(u)| = \mathcal{O}_P(\sqrt{b_n/n})$. 
    Moreover, 
    \begin{align*}
        \E \chi_{t+b_n, n}  
        &\leq  \frac{1}{b_n} \sum_{i,j=0}^{b_n-1} |\Cov(\eta_{t-i}, \eta_{t+j})| \\
        &\leq  \frac{1}{b_n} \sum_{i,j=0}^{b_n-1} (i+j+1)^{-\beta} \\
        &\leq  \frac{K}{n}\sum_{t=b_n}^{n -b_n} \frac{1}{b_n} \sum_{i=0}^{b_n-1} (i+1)^{1-\beta} 
        \quad \leq K b_n^{1-\beta},
    \end{align*}
    see \cite[Prop.~1]{mies_strong_2024} for the bound on the autocovariance of $\eta_t$. 
    Thus, $\sup_u \E(\widetilde{\Sigma}_n^\pm(u)) = \mathcal{O}(b_n^{1-\beta})$. 
    Jointly, we find
    \begin{align*}
        \sup_{u\in[0,1]} \left| \widetilde{\Sigma}_n(u)-\frac{1}{n} \sum_{t=1}^{\lfloor un\rfloor -b_n} \sigma^2_{\infty,n}(t) \right| = \mathcal{O}_P\left( \sqrt{\frac{b_n}{n}} + b_n^{-1} + b_n^{1-\beta} \right) = \mathcal{O}_P\left( \sqrt{\frac{b_n}{n}} + b_n^{-1} \right),
    \end{align*}
    since $\beta>3$, and in combination with \eqref{ass:LS3},
    \begin{align*}
        \widetilde{\Sigma}_n(u) - \Sigma(u) \pconv 0.
    \end{align*}
    The latter convergence holds uniformly by monotonicity and continuity of the limit $\Sigma(u)$, though without a rate, since \eqref{ass:LS3} does not state a rate.
    Moreover, together with \eqref{eqn:intvar-err}, we note that
    \begin{align*}
        \sup_{u\in[0,1]} \left| \widehat{\Sigma}_n(u)-\frac{1}{n} \sum_{t=1}^{\lfloor un\rfloor -b_n} \sigma^2_{\infty,n}(t) \right| = \mathcal{O}_P\left( \sqrt{\frac{v_n b_n}{n}} + b_n^{-1} \right).
    \end{align*}
    Since $\sigma^2_{\infty,n}(t)$ is bounded, we find that 
    \begin{align*}
        \sup_{|u-v|>c_n} \left| \widehat{\Sigma}_n(u) - \widehat{\Sigma}_n(v) \right|  
        =\mathcal{O}_P\left( \sqrt{\frac{v_n b_n}{n}} + b_n^{-1} + c_n \right).
    \end{align*}
\end{proof}
    
\begin{proof}[Proof of Theorem \ref{thm:changepoint-LS}]
    In the remainder of this proof, all probabilities only concern the randomness of $W$, i.e.\ we work conditionally on $\widehat{\Sigma}_n$.
For any $0\leq a < b\leq 1$, introduce
\begin{align*}
     Z_n(a,b) &= \sup_{|u-v|\in[a,b]} \frac{|B(\widehat{\Sigma}_n(u)) - B(\widehat{\Sigma}_n(v))|}{\rho_2(|u-v|)} \\
     Z(a,b) &= \sup_{|u-v|\in[a,b]} \frac{|B(\Sigma(u)) - B(\Sigma(v))|}{\rho_2(|u-v|)}. 
\end{align*}
The uniform consistency of $\widehat{\Sigma}_n(u)$ implies that $Z_n(c,1)\wconv Z(c,1)$ for any fixed $c\in(0,1)$.
Moreover, $Z(c,1)\wconv Z(0,1)$ as $c\to 0$. 
Using standard arguments \citep[Thm.~3.2]{Billingsley1999}, we obtain the thresholded weak convergence $(Z_n(c_n,1)\vee \tau)\wconv (Z(0,1)\vee \tau)$ if we can show that, for any $\epsilon>0$, 
\begin{align}
    0 &\overset{!}{=} \lim_{c\to 0} \limsup_{n\to\infty} P\left( \left|(Z_n(c_n,1)\vee \tau)- (Z_n(c,1)\vee \tau)\right| >\epsilon \right) \label{eqn:LS-limsup} \\
    &\leq \lim_{c\to 0} \limsup_{n\to\infty} P\left( Z_n(c_n,c) >\tau \right). \nonumber
\end{align}
To this end, observe that
\begin{align*}
    Z_n(c_n,c) &= \sup_{|u-v|\in[c_n,c]} \frac{|B(\widehat{\Sigma}_n(u)) - B(\widehat{\Sigma}_n(v))|}{\rho_2(|u-v|)}  \\
    &\leq \sup_{|u-v|\in[0,c]} \frac{|B(\widehat{\Sigma}_n(u)) - B(\widehat{\Sigma}_n(v))|}{\rho_2(|\widehat{\Sigma}_n(u)-\widehat{\Sigma}(v)|)} \;\cdot\; \sup_{|u-v|\geq [c_n,1]} \frac{\rho_2(|\widehat{\Sigma}_n(u)-\widehat{\Sigma}_n(v)|)}{\rho_2(|u-v|)}\\
    &\leq \sup_{\substack{|u-v|\in[0,\hat{c}] \\ u,v\leq \widehat{\Sigma}_n(1)}} \frac{|B(u) - B(v)|}{\rho_2(|u-v|)} \;\cdot\; \sup_{u,v\in[0,1]} \frac{\rho_2(\widehat{R}|u-v|)}{\rho_2(|u-v|)}, 
\end{align*}
for the random variables
\begin{align*}
    \hat{c} &= \sup_{|u-v|\leq c} \left|\widehat{\Sigma}_n(u) - \widehat{\Sigma}_n(v)\right| \leq Kc + \mathcal{O}_P\left(  \sqrt{\frac{v_n b_n}{n }} + \frac{1}{b_n}\right), \\
    \widehat{R} &= \sup_{|u-v|\geq c_n} \frac{|\widehat{\Sigma}_n(u)-\widehat{\Sigma}_n(v)|}{|u-v|} \leq K + \mathcal{O}_P\left(\sqrt{\frac{v_n b_n}{n c_n^2}} + \frac{1}{b_n c_n}\right).
\end{align*}
The $\mathcal{O}_P(\dots)$ terms vanish by virtue of our rate assumptions.
Lévy's Theorem on the modulus of continuity of $B$ yields, for any $\epsilon>0$
\begin{align*}
    &\lim_{c\to\infty} \limsup_{n\to\infty} P\left(\sup_{\substack{|u-v|\in[0,\hat{c}] \\ u,v\leq \widehat{\Sigma}_n(1)}} \frac{|B(u) - B(v)|}{\rho_2(|u-v|)}>\sqrt{2}(1+\epsilon)\right) \\
    &\leq \lim_{c\to\infty} P\left(\sup_{\substack{|u-v|\in[0,2Kc] \\ u,v\leq 2\Sigma(1)}} \frac{|B(u) - B(v)|}{\rho_2(|u-v|)}>\sqrt{2}(1+\epsilon)\right) = 0.
\end{align*}
Moreover, by Lemma \ref{Lemma:SchillVarianteWeibull}, the function $\zeta(R)=\sup_{u,v\in[0,1]} \frac{\rho_2(R|u-v|)}{\rho_2(|u-v|)}$ is well-defined and strictly increasing. 
Since $P(\widehat{R}>2K)\to 0$, we find that 
\begin{align*}
    P\left(\sup_{u,v\in[0,1]} \frac{\rho_2(\widehat{R}|u-v|)}{\rho_2(|u-v|)} > \zeta(2K) \right) \to 0.
\end{align*}
This yields
\begin{align*}
    \lim_{c\to\infty} \limsup_{n\to\infty} P\left( \sup_{\substack{|u-v|\in[0,\hat{c}] \\ u,v\leq \widehat{\Sigma}_n(1)}} \frac{|B(u) - B(v)|}{\rho_2(|u-v|)} \; \sup_{u,v\in[0,1]} \frac{\rho_2(\widehat{R}|u-v|)}{\rho_2(|u-v|)} > \sqrt{2}(1+\epsilon)\cdot \zeta(2K) \right) = 0,
\end{align*}
which establishes \eqref{eqn:LS-limsup} for any $\tau>\sqrt{2} \zeta(2K)$.
Thus, we find for any $t>\tau$,
\begin{align*}
    P\left( |B(\widehat{\Sigma}_n(u))|_{\rho_2} > t \,\Big|\, \widehat{\Sigma}_n \right) \pconv P\left( |B(\Sigma(u))|_{\rho_2} > t \right).
\end{align*}
Note that this indeed holds for all $t>\tau$, since the limit has a continuous distribution \cite{lifshits_absolute_1984}.
This limiting continuity also implies the convergence of quantiles.
\end{proof}

\subsection*{Proof of Proposition \ref{prop:sparse}}
    Consider the signal detection problem first. 
    Let $c_n = \frac{a_n}{n}$ and $d_n = \frac{a_n+l_n}{n}$. 
    By assumption \ref{eqn:condition-grid}, there exists $(u_n, v_n)\in \mathcal{G}$ such that $u_n \leq c_n < d_n\leq v_n$ and $|u_n-v_n| \leq K \frac{l_n}{n}$.
    Then
    \begin{align*}
        |S_n|_{\rho_2,\mathcal{G}} &\geq \frac{|S_n(v_n)-S_n(u_n)|}{\rho_2(K|d_n-c_n|)}.
    \end{align*}
    Similar to the proof of Theorem \ref{thm:signal}, we have $|\E(S_n(v_n)-S_n(u_n))| \geq \sqrt{n} |d_n-c_n| |\mu_n|$ and $\Var(S_n(v_n)-S_n(u_n)) = \sqrt{|v_n-u_n|} \leq \sqrt{K|d_n-c_n|}$, which yields 
    \begin{align*}
        \frac{|S_n(v_n)-S_n(u_n)|}{\rho_2(K|d_n-c_n|)} &\asymp \frac{\sqrt{n}|\mu_n| \sqrt{|d_n-c_n|}}{\sqrt{\log|d_n-c_n| }}\\
        &\asymp \frac{|\mu_n| \sqrt{l_n}}{\sqrt{\log (n/l_n)}},
    \end{align*}
    which tends to infinity by assumption.

    Now consider the change localization problem.
    Denote $w_k = w(\tau_k,z)$ for $\tau_k\in \mathcal{D}(z)$, as well as $c_n = \frac{\tau_k - \frac{w_k}{4K}}{n}$ and $d_n=\frac{\tau_k + \frac{w_k}{4K}}{n}$.
    By assumption \eqref{eqn:condition-grid}, there are $(u_n, v_n)\in \mathcal{G}$ such that $u_n \leq a_n < b_n\leq v_n$ and $|u_n-v_n| \leq K |a_n-b_n| \leq  w_k\leq L_k$.
    As in the proof of Theorem \ref{thm:changepoint}, we find for $J_n(\tau_k) = [ n u_n, n v_n]$ that
    \begin{align*}
        T_n^*\left(\Fhyp_{\text{const}}^{J_n(\tau_k)} \right)
        &= \inf_{a\in\R} \sup_{\substack{[un, vn] \subset J_n(\tau_k) \\ (u,v)\in\mathcal{G}}} \frac{|S_n(v)-S_n(u)-[v-u]a|}{\rho_2(v-u)}\\
        &\geq \frac{\frac{w_k \delta_k}{4K\sqrt{n}}}{\rho_2(w_k/n)} - |\widetilde{S}_n|_{\rho_2} 
        \qquad \geq c\sqrt{z} - |\widetilde{S}_n|_{\rho_2},
    \end{align*}
    which tends to infinity as $z\to \infty$, while $|\widetilde{S}_n|_{\rho_2}$ remains stochastically bounded.
    Since $|J_n(\tau_k)|\leq w(\tau_k,z)$, this completes the proof.

\newpage
\bibliography{literature.bib}

\begin{thebibliography}{57}
\expandafter\ifx\csname natexlab\endcsname\relax\def\natexlab#1{#1}\fi

\bibitem[{Arias-Castro et~al.(2011)Arias-Castro, Candès \& Durand}]{arias-castro_detection_2011}
\textsc{Arias-Castro, E.}, \textsc{Candès, E.~J.} \& \textsc{Durand, A.} (2011).
\newblock Detection of an anomalous cluster in a network.
\newblock \textit{The Annals of Statistics} \textbf{39}.

\bibitem[{Bastian \& Dette(2025)}]{bastian_multiscale_2025}
\textsc{Bastian, P.} \& \textsc{Dette, H.} (2025).
\newblock Multiscale detection of practically significant changes in a gradually varying time series.
\newblock ArXiv:2504.15872 [stat].

\bibitem[{Billingsley(1999)}]{Billingsley1999}
\textsc{Billingsley, P.} (1999).
\newblock \textit{Convergence of {Probability} {Measures}}.
\newblock John Wiley \& Sons.

\bibitem[{Chan \& Walther(2013)}]{chan_detection_2013}
\textsc{Chan, H.~P.} \& \textsc{Walther, G.} (2013).
\newblock Detection with the scan and the average likelihood ratio.
\newblock \textit{Statistica Sinica} , 409--428Publisher: JSTOR.

\bibitem[{Chatterjee \& Lafferty(2019)}]{chatterjee_adaptive_2019}
\textsc{Chatterjee, S.} \& \textsc{Lafferty, J.} (2019).
\newblock Adaptive risk bounds in unimodal regression.
\newblock \textit{Bernoulli} \textbf{25}.

\bibitem[{Cho \& Kirch(2021)}]{cho_data_2021}
\textsc{Cho, H.} \& \textsc{Kirch, C.} (2021).
\newblock Data segmentation algorithms: {Univariate} mean change and beyond.
\newblock \textit{Econometrics and Statistics} , S2452306221001234.

\bibitem[{Cui et~al.(2021)Cui, Levine \& Zhou}]{cui_estimation_2021}
\textsc{Cui, Y.}, \textsc{Levine, M.} \& \textsc{Zhou, Z.} (2021).
\newblock Estimation and inference of time-varying auto-covariance under complex trend: {A} difference-based approach.
\newblock \textit{Electronic Journal of Statistics} \textbf{15}.

\bibitem[{Dahlhaus(1997)}]{Dahlhaus1997}
\textsc{Dahlhaus, R.} (1997).
\newblock Fitting time series models to nonstationary processes.
\newblock \textit{Annals of Statistics} \textbf{25}, 1--37.

\bibitem[{Dahlhaus et~al.(2019)Dahlhaus, Richter \& Wu}]{Dahlhaus2017}
\textsc{Dahlhaus, R.}, \textsc{Richter, S.} \& \textsc{Wu, W.~B.} (2019).
\newblock Towards a general theory for nonlinear locally stationary processes.
\newblock \textit{Bernoulli} \textbf{25}, 1013--1044.

\bibitem[{Dette et~al.(2020)Dette, Eckle \& Vetter}]{dette_multiscale_2020}
\textsc{Dette, H.}, \textsc{Eckle, T.} \& \textsc{Vetter, M.} (2020).
\newblock Multiscale change point detection for dependent data.
\newblock \textit{Scandinavian Journal of Statistics} \textbf{47}, 1243--1274.

\bibitem[{Dette et~al.(2019)Dette, Wu \& Zhou}]{Dette2018}
\textsc{Dette, H.}, \textsc{Wu, W.} \& \textsc{Zhou, Z.} (2019).
\newblock Change {Point} {Analysis} of {Correlation} in {Non}-stationary {Time} {Series}.
\newblock \textit{Statistica Sinica} \textbf{29}, 611--643.
\newblock ArXiv: 1801.10478.

\bibitem[{Dümbgen \& Spokoiny(2001)}]{dumbgen_multiscale_2001}
\textsc{Dümbgen, L.} \& \textsc{Spokoiny, V.~G.} (2001).
\newblock Multiscale {Testing} of {Qualitative} {Hypotheses}.
\newblock \textit{The Annals of Statistics} \textbf{29}.

\bibitem[{Dümbgen \& Walther(2008)}]{dumbgen_multiscale_2008}
\textsc{Dümbgen, L.} \& \textsc{Walther, G.} (2008).
\newblock Multiscale inference about a density.
\newblock \textit{The Annals of Statistics} \textbf{36}.

\bibitem[{ENTSO-E(2025)}]{spain2025entsoe}
\textsc{ENTSO-E} (2025).
\newblock {ENTSO-E expert panel initiates the investigation into the causes of Iberian blackout}.
\newblock https://www.entsoe.eu/news/2025/05/09/entso-e-expert-panel-initiates-the-investigation-into-the-causes-of-iberian-blackout/.
\newblock Retrieved on 2025-05-23.

\bibitem[{Feng et~al.(2022)Feng, Chen, Han, Carroll \& Samworth}]{feng_nonparametric_2022}
\textsc{Feng, O.~Y.}, \textsc{Chen, Y.}, \textsc{Han, Q.}, \textsc{Carroll, R.~J.} \& \textsc{Samworth, R.~J.} (2022).
\newblock Nonparametric, tuning‐free estimation of {S}‐shaped functions.
\newblock \textit{Journal of the Royal Statistical Society: Series B (Statistical Methodology)} \textbf{84}, 1324--1352.

\bibitem[{Frick et~al.(2014)Frick, Munk \& Sieling}]{frick2014multiscale}
\textsc{Frick, K.}, \textsc{Munk, A.} \& \textsc{Sieling, H.} (2014).
\newblock Multiscale change point inference.
\newblock \textit{Journal of the Royal Statistical Society: Series B: Statistical Methodology} , 495--580Publisher: JSTOR.

\bibitem[{Frigo et~al.(2019)Frigo, Derviskadic, Zuo \& Paolone}]{frigo_pmu-based_2019}
\textsc{Frigo, G.}, \textsc{Derviskadic, A.}, \textsc{Zuo, Y.} \& \textsc{Paolone, M.} (2019).
\newblock {PMU}-{Based} {ROCOF} {Measurements}: {Uncertainty} {Limits} and {Metrological} {Significance} in {Power} {System} {Applications}.
\newblock \textit{IEEE Transactions on Instrumentation and Measurement} \textbf{68}, 3810--3822.

\bibitem[{Fryzlewicz(2014)}]{fryzlewicz_wild_2014}
\textsc{Fryzlewicz, P.} (2014).
\newblock Wild binary segmentation for multiple change-point detection.
\newblock \textit{The Annals of Statistics} \textbf{42}.

\bibitem[{Fryzlewicz(2024{\natexlab{a}})}]{fryzlewicz_narrowest_2024}
\textsc{Fryzlewicz, P.} (2024{\natexlab{a}}).
\newblock Narrowest {Significance} {Pursuit}: inference for multiple change-points in linear models.
\newblock \textit{Journal of American Statistical Association} \textbf{119}, 1633--1646.
\newblock ArXiv: 2009.05431.

\bibitem[{Fryzlewicz(2024{\natexlab{b}})}]{fryzlewicz_robust_2024}
\textsc{Fryzlewicz, P.} (2024{\natexlab{b}}).
\newblock Robust {Narrowest} {Significance} {Pursuit}: inference for multiple change-points in the median.
\newblock \textit{Journal of Business \& Economic Statistics} \textbf{42}, 1389--1402.
\newblock ArXiv:2109.02487 [stat].

\bibitem[{Gavioli-Akilagun \& Fryzlewicz(2025)}]{gavioli-akilagun_fast_2025}
\textsc{Gavioli-Akilagun, S.} \& \textsc{Fryzlewicz, P.} (2025).
\newblock Fast and optimal inference for change points in piecewise polynomials via differencing.
\newblock \textit{Electronic Journal of Statistics} \textbf{19}.

\bibitem[{Glaz et~al.(2009)Glaz, Pozdnyakov \& Wallenstein}]{glaz_scan_2009}
\textsc{Glaz, J.}, \textsc{Pozdnyakov, V.} \& \textsc{Wallenstein, S.}, eds. (2009).
\newblock \textit{Scan {Statistics}: {Methods} and {Applications}}.
\newblock Boston, MA: Birkhäuser Boston.

\bibitem[{Hamadouche(1998)}]{hamadouche_weak_1998}
\textsc{Hamadouche, D.} (1998).
\newblock Weak convergence of smoothed empirical process in {Hölder} spaces.
\newblock \textit{Statistics \& Probability Letters} \textbf{36}, 393--400.

\bibitem[{Hamadouche(2000)}]{hamadouche_invariance_2000}
\textsc{Hamadouche, D.} (2000).
\newblock Invariance principles in {Holder} spaces.
\newblock \textit{Portugaliae Mathematica} \textbf{57}, 127--152.
\newblock Publisher: Lisboa, Gazeta de matematica [etc.].

\bibitem[{Khismatullina \& Vogt(2020)}]{khismatullina_multiscale_2020}
\textsc{Khismatullina, M.} \& \textsc{Vogt, M.} (2020).
\newblock Multiscale {Inference} and {Long}-{Run} {Variance} {Estimation} in {Non}-{Parametric} {Regression} with {Time} {Series} {Errors}.
\newblock \textit{Journal of the Royal Statistical Society Series B: Statistical Methodology} \textbf{82}, 5--37.

\bibitem[{König et~al.(2020)König, Munk \& Werner}]{konig_multidimensional_2020}
\textsc{König, C.}, \textsc{Munk, A.} \& \textsc{Werner, F.} (2020).
\newblock Multidimensional multiscale scanning in exponential families: {Limit} theory and statistical consequences.
\newblock \textit{The Annals of Statistics} \textbf{48}.

\bibitem[{Lamperti(1962)}]{lamperti_convergence_1962}
\textsc{Lamperti, J.} (1962).
\newblock On convergence of stochastic processes.
\newblock \textit{Transactions of the American Mathematical Society} \textbf{104}, 430--435.

\bibitem[{Lifshits(1984)}]{lifshits_absolute_1984}
\textsc{Lifshits, M.~A.} (1984).
\newblock Absolute continuity of functionals of “supremum” type for {Gaussian} processes.
\newblock \textit{Journal of Soviet Mathematics} \textbf{27}, 3103--3112.

\bibitem[{Liu et~al.(2013)Liu, Xiao \& Wu}]{Liu2013}
\textsc{Liu, W.}, \textsc{Xiao, H.} \& \textsc{Wu, W.~B.} (2013).
\newblock Probability and moment inequalities under dependence.
\newblock \textit{Statistica Sinica} \textbf{23}, 1257--1272.

\bibitem[{Machowski et~al.(2020)Machowski, Bialek \& Bumby}]{machowski_power_2020}
\textsc{Machowski, J.}, \textsc{Bialek, J.~W.} \& \textsc{Bumby, J.~R.} (2020).
\newblock \textit{Power system dynamics, stability and control}.
\newblock Chichester, G. B: Wiley, 3rd ed.

\bibitem[{Mies(2023)}]{mies_functional_2023}
\textsc{Mies, F.} (2023).
\newblock Functional estimation and change detection for nonstationary time series.
\newblock \textit{Journal of the American Statistical Association} \textbf{118}, 1011--1022.

\bibitem[{Mies(2024)}]{mies_strong_2024}
\textsc{Mies, F.} (2024).
\newblock Strong {Gaussian} approximations with random multipliers.
\newblock ArXiv:2412.14346 [math].

\bibitem[{Mies \& Steland(2023)}]{mies_sequential_2023}
\textsc{Mies, F.} \& \textsc{Steland, A.} (2023).
\newblock Sequential {Gaussian} approximation for nonstationary time series in high dimensions.
\newblock \textit{Bernoulli} \textbf{29}, 3114--3140.

\bibitem[{Niu \& Zhang(2012)}]{niu_screening_2012}
\textsc{Niu, Y.~S.} \& \textsc{Zhang, H.} (2012).
\newblock The screening and ranking algorithm to detect {DNA} copy number variations.
\newblock \textit{The Annals of Applied Statistics} \textbf{6}.

\bibitem[{Pein et~al.(2017)Pein, Sieling \& Munk}]{pein_heterogeneous_2017}
\textsc{Pein, F.}, \textsc{Sieling, H.} \& \textsc{Munk, A.} (2017).
\newblock Heterogeneous change point inference.
\newblock \textit{Journal of the Royal Statistical Society. Series B (Statistical Methodology)} \textbf{79}, 1207--1227.
\newblock Publisher: [Royal Statistical Society, Wiley].

\bibitem[{Pilliat et~al.(2023)Pilliat, Carpentier \& Verzelen}]{pilliat_optimal_2023}
\textsc{Pilliat, E.}, \textsc{Carpentier, A.} \& \textsc{Verzelen, N.} (2023).
\newblock Optimal multiple change-point detection for high-dimensional data.
\newblock \textit{Electronic Journal of Statistics} \textbf{17}, 1240--1315.
\newblock ArXiv:2011.07818 [math, stat].

\bibitem[{Pinelis(1994)}]{Pinelis1994a}
\textsc{Pinelis, I.} (1994).
\newblock Optimum {Bounds} for the {Distributions} of {Martingales} in {Banach} {Spaces}.
\newblock \textit{The Annals of Probability} \textbf{22}, 1679--1706.

\bibitem[{Rackauskas \& Suquet(2004{\natexlab{a}})}]{rackauskas_holder_2004}
\textsc{Rackauskas, A.} \& \textsc{Suquet, C.} (2004{\natexlab{a}}).
\newblock Hölder norm test statistics for epidemic change.
\newblock \textit{Journal of Statistical Planning and Inference} \textbf{126}, 495--520.

\bibitem[{Rackauskas \& Suquet(2004{\natexlab{b}})}]{rackauskas_necessary_2004-1}
\textsc{Rackauskas, A.} \& \textsc{Suquet, C.} (2004{\natexlab{b}}).
\newblock Necessary and {Sufficient} {Condition} for the {Functional} {Central} {Limit} {Theorem} in {Hölder} {Spaces}.
\newblock \textit{Journal of Theoretical Probability} \textbf{17}, 221--243.

\bibitem[{Rackauskas \& Suquet(2004{\natexlab{c}})}]{rackauskas_necessary_2004}
\textsc{Rackauskas, A.} \& \textsc{Suquet, C.} (2004{\natexlab{c}}).
\newblock Necessary and sufficient condition for the {Lamperti} invariance principle.
\newblock \textit{Theory of Probability and Mathematical Statistics} \textbf{68}, 127--137.

\bibitem[{Rackauskas \& Suquet(2007)}]{rackauskas_estimating_2007}
\textsc{Rackauskas, A.} \& \textsc{Suquet, C.} (2007).
\newblock Estimating a {Changed} {Segment} in a {Sample}.
\newblock \textit{Acta Applicandae Mathematicae} \textbf{97}, 189--210.

\bibitem[{Rackauskas \& Suquet(2009)}]{rackauskas_holderian_2009}
\textsc{Rackauskas, A.} \& \textsc{Suquet, C.} (2009).
\newblock Hölderian invariance principle for {Hilbertian} linear processes.
\newblock \textit{ESAIM: Probability and Statistics} \textbf{13}, 261--275.

\bibitem[{Rackauskas \& Wendler(2020)}]{rackauskas_convergence_2020}
\textsc{Rackauskas, A.} \& \textsc{Wendler, M.} (2020).
\newblock Convergence of {U}-processes in {Hölder} spaces with application to robust detection of a changed segment.
\newblock \textit{Statistical Papers} \textbf{61}, 1409--1435.

\bibitem[{Reeves et~al.(2007)Reeves, Chen, Wang, Lund \& Lu}]{reeves_review_2007}
\textsc{Reeves, J.}, \textsc{Chen, J.}, \textsc{Wang, X.~L.}, \textsc{Lund, R.} \& \textsc{Lu, Q.~Q.} (2007).
\newblock A {Review} and {Comparison} of {Changepoint} {Detection} {Techniques} for {Climate} {Data}.
\newblock \textit{Journal of Applied Meteorology and Climatology} \textbf{46}, 900--915.

\bibitem[{Rivera \& Walther(2013)}]{rivera_optimal_2013}
\textsc{Rivera, C.} \& \textsc{Walther, G.} (2013).
\newblock Optimal detection of a jump in the intensity of a {Poisson} process or in a density with likelihood ratio statistics.
\newblock \textit{Scandinavian Journal of Statistics} \textbf{40}, 752--769.

\bibitem[{Rohde(2008)}]{rohde_adaptive_2008}
\textsc{Rohde, A.} (2008).
\newblock Adaptive goodness-of-fit tests based on signed ranks.
\newblock \textit{The Annals of Statistics} \textbf{36}.

\bibitem[{Schilling(2021)}]{schilling_brownian_2021}
\textsc{Schilling, R.~L.} (2021).
\newblock \textit{Brownian {Motion}: a guide to random processes and stochastic calculus}.
\newblock Berlin ; Boston: De Gruyter, 3rd ed.

\bibitem[{Schmidt-Hieber et~al.(2013)Schmidt-Hieber, Munk \& Dümbgen}]{schmidt-hieber_multiscale_2013}
\textsc{Schmidt-Hieber, J.}, \textsc{Munk, A.} \& \textsc{Dümbgen, L.} (2013).
\newblock Multiscale methods for shape constraints in deconvolution: {Confidence} statements for qualitative features.
\newblock \textit{The Annals of Statistics} \textbf{41}.

\bibitem[{Sharpnack \& Arias-Castro(2016)}]{sharpnack_exact_2016}
\textsc{Sharpnack, J.} \& \textsc{Arias-Castro, E.} (2016).
\newblock Exact asymptotics for the scan statistic and fast alternatives.
\newblock \textit{Electronic Journal of Statistics} \textbf{10}.

\bibitem[{Verzelen et~al.(2023)Verzelen, Fromont, Lerasle \& Reynaud-Bouret}]{verzelen_optimal_2023}
\textsc{Verzelen, N.}, \textsc{Fromont, M.}, \textsc{Lerasle, M.} \& \textsc{Reynaud-Bouret, P.} (2023).
\newblock Optimal {Change}-{Point} {Detection} and {Localization}.
\newblock \textit{The Annals of Statistics} \textbf{51}, 1586--1610.
\newblock ArXiv: 2010.11470.

\bibitem[{Vladimirova et~al.(2020)Vladimirova, Girard, Nguyen \& Arbel}]{vladimirova_subweibull_2020}
\textsc{Vladimirova, M.}, \textsc{Girard, S.}, \textsc{Nguyen, H.} \& \textsc{Arbel, J.} (2020).
\newblock Sub‐{Weibull} distributions: {Generalizing} sub‐{Gaussian} and sub‐{Exponential} properties to heavier tailed distributions.
\newblock \textit{Stat} \textbf{9}, e318.

\bibitem[{Vogt \& Dette(2015)}]{Vogt2015}
\textsc{Vogt, M.} \& \textsc{Dette, H.} (2015).
\newblock Detecting gradual changes in locally stationary processes.
\newblock \textit{Annals of Statistics} \textbf{43}, 713--740.

\bibitem[{Walther \& Perry(2022)}]{walther_calibrating_2022}
\textsc{Walther, G.} \& \textsc{Perry, A.} (2022).
\newblock Calibrating the {Scan} {Statistic}: {Finite} {Sample} {Performance} {Versus} {Asymptotics}.
\newblock \textit{Journal of the Royal Statistical Society Series B: Statistical Methodology} \textbf{84}, 1608--1639.

\bibitem[{Wu(2005)}]{Wu2005}
\textsc{Wu, W.~B.} (2005).
\newblock Nonlinear system theory: {Another} look at dependence.
\newblock \textit{Proceedings of the National Academy of Sciences} \textbf{102}, 14150--14154.

\bibitem[{Wu \& Zhao(2007)}]{wu_inference_2007}
\textsc{Wu, W.~B.} \& \textsc{Zhao, Z.} (2007).
\newblock Inference of trends in time series.
\newblock \textit{Journal of the Royal Statistical Society: Series B (Statistical Methodology)} \textbf{69}, 391--410.

\bibitem[{Wu \& Zhou(2011)}]{Wu2011}
\textsc{Wu, W.~B.} \& \textsc{Zhou, Z.} (2011).
\newblock Gaussian approximations for non-stationary multiple time series.
\newblock \textit{Statistica Sinica} \textbf{21}, 1397--1413.

\bibitem[{Zhou(2013)}]{Zhou2013}
\textsc{Zhou, Z.} (2013).
\newblock Heteroscedasticity and autocorrelation robust structural change detection.
\newblock \textit{Journal of the American Statistical Association} \textbf{108}, 726--740.

\end{thebibliography}
\bibliographystyle{apalike}
\end{document}